\theoremstyle{plain}
\newtheorem{theorem}{Theorem}[section]
\newtheorem{corollary}[theorem]{Corollary}
\newtheorem{lemma}[theorem]{Lemma}
\newtheorem{proposition}[theorem]{Proposition}
\numberwithin{equation}{section}
\theoremstyle{definition}
\theoremstyle{remark}
\newtheorem{remark}[theorem]{Remark}
\newtheorem{assumption}[theorem]{Assumption}
\newtheorem{example}[theorem]{Example}
\setlist[itemize]{leftmargin=.5in}
\setlist[enumerate]{leftmargin=.5in,topsep=3pt,itemsep=3pt,label=(\roman*)}
\newcommand*\samethanks[1][\value{footnote}]{\footnotemark[#1]}
\newcommand{\TheTitle}{Drift Estimation of Multiscale Diffusions  \\ Based on Filtered Data} 
\newcommand{\TheAuthors}{A. Abdulle, G. Garegnani, G. Pavliotis, A. M. Stuart, A. Zanoni}
\title{\TheTitle}
\author{Assyr Abdulle \thanks{Institute of Mathematics, École Polytechnique Fédérale de Lausanne}
		\and Giacomo Garegnani  \samethanks
		\and Grigorios A. Pavliotis \thanks{Department of Mathematics, Imperial College London}
		\and Andrew M. Stuart \thanks{Department of Computing and Mathematical Sciences, Caltech}
		\and Andrea Zanoni \samethanks[1]
}
\date{}
\newcommand{\abs}[1]{\left\lvert#1\right\rvert}
\newcommand{\norm}[1]{\left\|#1\right\|}
\renewcommand{\phi}{\varphi}
\renewcommand{\theta}{\vartheta}
\newcommand{\R}{\mathbb{R}}
\newcommand{\epl}{\varepsilon}
\newcommand{\defeq}{\coloneqq}
\newcommand{\eqdef}{\eqqcolon}
\newcommand{\E}{\operatorname{\mathbb{E}}}
\newcommand{\trace}{\operatorname{tr}}
\renewcommand{\d}{\mathrm{d}}
\newcommand{\dd}{\,\mathrm{d}}
\definecolor{shade}{RGB}{100, 100, 100}
\definecolor{bordeaux}{RGB}{128, 0, 50}
\definecolor{leg1}{RGB}{0,114,189}
\definecolor{leg2}{RGB}{217,83,25}
\definecolor{leg3}{RGB}{237,177,32}
\definecolor{leg4}{RGB}{126,47,142}
\definecolor{leg5}{RGB}{119,172,48}
\definecolor{leg21}{RGB}{62,38,169}
\definecolor{leg22}{RGB}{46,135,247}
\definecolor{leg23}{RGB}{55,200,151}
\definecolor{leg24}{RGB}{254,195,56}
\begin{document}
\maketitle	

\textbf{Abstract.} We study the problem of drift estimation for two-scale continuous time series. We set ourselves in the framework of overdamped Langevin equations, for which a single-scale surrogate homogenized equation exists. In this setting, estimating the drift coefficient of the homogenized equation requires pre-processing of the data, often in the form of subsampling; this is because the two-scale equation and the homogenized single-scale equation are incompatible at small scales, generating mutually singular measures on the path space. We avoid subsampling and work instead with filtered data, found by application of an appropriate kernel function, and compute maximum likelihood estimators based on the filtered process. We show that the estimators we propose are asymptotically unbiased and demonstrate numerically the advantages of our method with respect to subsampling. Finally, we show how our filtered data methodology can be combined with Bayesian techniques and provide a full uncertainty quantification of the inference procedure.
 
\textbf{AMS subject classifications.} 62F15, 65C30, 62M05, 74Q10.

\textbf{Keywords.} Parameter inference, diffusion process, data-driven homogenization, filtering, Bayesian inference, Langevin equation.

\section{Introduction}

Efficient parameter estimation for stochastic models is essential in a wide range of applications in natural and social sciences. In several areas, the data originate from phenomena which vary continuously in time and which are endowed with a multiscale structure. This is the case, for example, in molecular dynamics, oceanography and atmosphere science or in econometrics. Frequently, it is desirable in these areas to infer from data a simpler model which captures effectively large-scale structures, or slow variations, disregarding small-scale fluctuations or treating them as a source of noise. The mismatch between the data and their desired slow-scale representation is a typical instance of a problem of model misspecification, which, if ignored or handled incorrectly, can lead to erroneous inference. Indeed, the data, coming from the full dynamics, are compatible with the coarse-grained model only at the time scales at which the effective dynamics is valid.

In this paper we consider a simple multiscale setting arising from models of molecular dynamics, with a complete separation between the fast and the slow scale. In particular, we consider diffusion processes for motion in a confining potential which has slow variations with rapid order-one oscillations superimposed. Given data in the form of a sample path from this simple class of model problems, we are interested in determining the drift coefficient of an equation of the overdamped Langevin type in which the fast-scale potential is eliminated. The theory of homogenization guarantees that such a single-scale equation can be uniquely determined, and our goal is therefore to obtain effective coarse-grained dynamics from data consistently with respect to the homogenization result. 

Several methods to take into account model misspecification in multiscale frameworks as above exist. For diffusion processes, the proposed approaches rely in different measures to subsampling, which has proved itself to some extent effective in many applications, but which requires nevertheless precise knowledge of how separated the two characteristic time scales are. Robustness of this methodology is dubious, too, as inference results tend to be extremely sensitive to the subsampling rate. 

In the rest of the introduction, we first give a brief overview of the existing literature on the topic of deterministic and stochastic multiscale inference problems, then introduce our novel methodology and its favourable properties and conclude with an outline of this paper.

\subsection{Literature Review}

For simple models in molecular dynamics, the effect of model misspecification was studied in a series of papers \cite{ABT10,ABJ13,PaS07,PPS09,PPS12,GaS17,GaS18} under the assumption of scale separation. In particular, for Brownian particles moving in two-scale potentials it was shown that, when fitting data from the full dynamics to the homogenized equation, the maximum likelihood estimator (MLE) is asymptotically biased \cite[Theorem 3.4]{PaS07}. To be more precise, in the large sample size limit, the data remains consistent with the multi-scale problem at small scale. Ostensibly this would seem related only to the estimation of the diffusion coefficient. However, because of detail balance, it also has the effect that the MLE, for the drift in a parameter fit of a single-scale model, incorrectly identifies the coefficient of the homogenized equation. The bias of the MLE can be eliminated by subsampling at an appropriate rate, which lies between the two characteristic time scales of the problem \cite[Theorems 3.5 and 3.6]{PaS07}. 

Similar techniques can be employed in econometrics, in particular for the estimation of the integrated stochastic volatility in the presence of market microstructure noise. In this case, too, the data have to be subsampled at an appropriate rate \cite{AMZ05,OSP10}. The correct subsampling rate can, in some instances, be rather extreme with respect to the frequency of the data itself, resulting in ignoring as much as $99\%$ of the time-series. As the intuition suggests, this increases significantly the variance of the estimator, which is usually taken care of with additional bias corrections and variance reduction procedures. The need of such methodology is accentuated by data being obtained at high-frequency \cite{AiJ14,ZMA05}.  

The problem of extracting large-scale variations from multiscale data is studied in atmosphere and ocean science. In this field, too, subsampling the data is necessary to obtain an accurate coarse-grained model \cite{CoP09,YMV19}.

The necessity to subsample the data can be alleviated by using appropriate martingale estimators, as was done in \cite{KPK13,KKP15}. This class of estimators can be applied to the case where the noise is multiplicative and also given by a deterministic chaotic system, as opposed to white noise. Estimators of this family have been applied to time series from paleoclimatic data and marine biology and augmented with appropriate model selection methodologies \cite{KPP15}. 

In case the data consist of discrete observations and not of continuous time series, it is possible to employ estimators based on a spectral decomposition of the generator of the stochastic process. Methodologies of this kind have been applied successfully to inference problems for single-scale problems \cite{CrV06,KeS99}, as well as more recently for multiscale diffusions \cite{CrV11}.

Inference of diffusion processes can be naturally performed under a Bayesian perspective. If one focuses on the drift coefficient, the form of the likelihood function guarantees, under a Gaussian prior hypothesis, that the posterior distribution is itself a Gaussian. The versatility of the Bayesian approach in the infinite-dimensional case \cite{Stu10, DaS16} gives the possibility to extend the study of inferring the drift of a diffusion process to the non-parametric case \cite{PSV09, PSZ13}. 

The issue of model misspecification in inverse problems with a multiscale structure has been treated in the context of partial differential equations, too. In particular, it has been shown that it is possible to infer a coarse-grained equation from data coming from the full model and to retrieve, in the large data limit, the correct result \cite{NPS12}. A series of papers \cite{AbD20, AbD19, AGZ20} focuses on retrieving the full model when the multiscale coefficient is endowed with a specific parametrized structure. Since these problems are ill-posed, the latter is achieved via Tikhonov regularization \cite{AbD19,NPS12}, adopting a Bayesian approach \cite{AbD20, NPS12} or exploiting techniques of Kalman filtering \cite{AGZ20}. In \cite{AbD20,AGZ20}, the authors highlight the need to account explicitly for the modelling error due to homogenization and apply statistical techniques taken from \cite{CDS18,CES14}.

\subsection{Our Contributions}
In this paper, we bypass subsampling by designing a methodology based on filtered data. In particular, we smooth the time-series data from the multiscale model by application of an appropriate linear time-invariant filter, from the exponential family, and show that doing so allows us to accurately retrieve the drift coefficient of the homogenized model. The methodology we present is straightforward to implement, robust in practice and backed by theory. In particular, we show theoretically and demonstrate via numerical experiments that:
\begin{enumerate}
	\item The smoothing width of the filter can be alternatively tuned to be proportional to the speed of the slow process or to smaller scales and provide in both cases unbiased results for maximum likelihood parameter estimation. Sharp estimates on the minimal width with respect to the multiscale parameter are provided. The unbiasedness results are given in \cref{thm:mainTheorem,thm:mainTheorem_zeta} for filtered data in the homogenized and in the multiscale regimes, respectively.
	\item We additionally propose in the multiscale regime an estimator of the effective diffusion coefficient based on filtered data, as shown by \cref{thm:diffusion_unbiasedness}.
	\item Estimations based on our technique are robust in practice with respect to the parameter of the filter. This is not the case for subsampling, which is strongly influenced by the subsampling frequency. The robustness of our technique is demonstrated via numerical experiments in \cref{sec:Num_Param,sec:Num_Multi}.
	\item The entire stream of data is employed, which, in practice, enhances the quality of the filter-based MLE in terms of bias. Moreover, avoiding subsampling and thus discretising the data allows us to employ continuous-time theoretical tools.
	\item It is possible to employ the filtered data approach within a continuous-time Bayesian framework by a careful modification of the likelihood function. Under mild hypotheses on the filter parameters, we are able to show that the posterior distributions obtained with our methodology are asymptotically consistent with respect to the drift parameter of the homogenized equation. Our main theoretical result is given in \cref{thm:Bayesian}, and a numerical experiment for the combination of the filtered data approach and of Bayesian techniques is presented in \cref{sec:Num_Bayes}.
\end{enumerate} 

\subsection{Outline}
The rest of the paper is organised as follows. In Section \ref{sec:Setting} we introduce the problem and lay the basis of our analysis setting the main assumptions and notation. In Section \ref{sec:Filter} we present our filtered data methodology, with a particular focus on ergodic properties, on multiscale convergence and, naturally, on the properties of our estimators. In Section \ref{sec:Bayesian} we introduce the Bayesian framework and show how it can be enhanced employing filtered data. Finally, in Section \ref{sec:NumExp} we demonstrate the effectiveness of our methodology via a series of numerical experiments.

\section{Problem Setting}\label{sec:Setting}

In this section, we introduce the class of diffusion processes which we treat in this paper and the classical methodology employed for the estimation of the drift. Let $\epl > 0$ and let us consider the one-dimensional multiscale stochastic differential equation (SDE)
\begin{equation}\label{eq:SDE_MS}
	\d X_t^\epl = -\alpha \cdot V'(X_t^\epl) \dd t - \frac1\epl p'\left(\frac{X_t^\epl}\epl\right) \dd t + \sqrt{2\sigma} \dd W_t,
\end{equation}
where, given a positive integer $N$, we have that $\alpha \in \R^N$ and $\sigma > 0$ are the drift and diffusion coefficients respectively and $W_t$ is a standard one-dimensional Brownian motion. The functions $V\colon \R \to \R^N$ and $p\colon \R \to \R$ define the slow-scale and the fast-scale confining potentials respectively. In particular, we assume 
\begin{equation}\label{eq:Potential}
	V(x) = \begin{pmatrix} V_1(x) & V_2(x) & \cdots & V_N(x) \end{pmatrix}^\top,
\end{equation}
for smooth functions $V_i\colon \R \to \R$, $i = 1, \ldots, N$. Moreover, we assume $p$ to be smooth and periodic of period $L$. The theory of homogenization \cite[Chapter 3]{BLP78} guarantees the existence of an SDE of the form
\begin{equation}\label{eq:SDE_HOM}
	\d X_t = - A \cdot V'(X_t) \dd t + \sqrt{2\Sigma} \dd W_t,
\end{equation}
such that $X_t^\epl \to X_t$ for $\epl\to 0$ in law as random variables in $\mathcal C^0([0, T]; \R)$. In particular, we have $A = K\alpha$ and $\Sigma = K \sigma$, where the coefficient $0<K<1$ is given by the formula
\begin{equation}\label{eq:K_HOM}
	K = \int_0^L (1 + \Phi'(y))^2 \, \mu(\d y),
\end{equation}
with 
\begin{equation}
	\mu(\d y) = \frac1Z e^{-p(y)/\sigma} \dd y, \quad\text{where}\quad Z = \int_0^L e^{-p(y)/\sigma} \dd y,
\end{equation}
and where the function $\Phi$ is the unique solution with zero-mean with respect to the measure $\mu$ of the two-point boundary value problem
\begin{equation}\label{eq:CellProblem}
	-p'(y)\Phi'(y) + \sigma \Phi''(y) = p'(y), \quad 0 \leq y \leq L,
\end{equation}
endowed with periodic boundary conditions. Let us remark that in this one-dimensional setting it is possible to determine $\Phi$ explicitly, and the homogenization coefficient $K$ is given by
\begin{equation}
	K = \frac{L^2}{Z\widehat Z},
\end{equation}
where
\begin{equation}
	Z = \int_0^L e^{-p(y)/\sigma} \dd y, \quad \widehat Z = \int_0^L e^{p(y)/\sigma} \dd y.
\end{equation}

We now briefly present the classical methodology for estimating the drift coefficient. Let $T > 0$ and let $X \defeq (X_t, 0\leq t \leq T)$ be a realization of the solution of \eqref{eq:SDE_HOM} up to final time $T$. Girsanov's change of measure formula applied to \eqref{eq:SDE_HOM} allows to write the likelihood of $X$ given a drift coefficient $A$ as
\begin{equation}\label{eq:Likelihood}
p(X \mid A) = \exp\left(-\frac{I(X\mid A)}{2\Sigma} \right), 
\end{equation}
where 
\begin{equation}
I(X \mid A) = \int_0^T A \cdot V'(X_t) \dd X_t + \frac12 \int_0^T \left( A \cdot V'(X_t) \right)^2 \dd t.
\end{equation}
Minimizing the functional $I(X \mid A)$ with respect to $A$ therefore gives the maximum likelihood estimator (MLE) of $A$, which can be formally computed in closed form as
\begin{equation}\label{eq:MLE_Simple}
	\widehat A(X, T) \defeq \arg \min_{A \in \R^N} I(X \mid A) = - M^{-1}(X)h(X),
\end{equation}
where $M(X)\in\R^{N\times N}$ and $h(X)\in\R^N$ are defined as
\begin{equation}\label{eq:MandH}
M(X) = \frac1T \int_0^T V'(X_t) \otimes V'(X_t) \dd t, \quad h(X) = \frac1T \int_0^T V'(X_t) \dd X_t,
\end{equation}
and where $\otimes$ denotes the outer product in $\R^N$. Let us now state the assumptions which will be employed throughout the rest of our work. In particular, we consider the same dissipative setting as \cite[Assumption 3.1]{PaS07}.
\begin{assumption}\label{as:regularity} The potentials $p$ and $V$ satisfy
	\begin{enumerate}
		\item $p \in \mathcal C^\infty(\R)$ and is $L$-periodic for some $L > 0$;
		\item\label{as:regularity_diss} $V_i \in \mathcal C^\infty(\R)$ for all $i=1, \ldots, N$ is polynomially bounded from above and bounded from below,  and there exist $a,b > 0$ such that
		\begin{equation}
		-\alpha \cdot V'(x) x \leq a - bx^2;
		\end{equation} 
		\item\label{as:regularity_Lip} $V'$ is Lipschitz continuous, i.e. there exists a constant $C > 0$ such that
		\begin{equation}
			\norm{V'(x) - V'(y)}_2 \leq C\abs{x - y},
		\end{equation} 
		and the components $V'_i$ are polynomially bounded for all $i = 1, \ldots, N$;
		\item\label{as:regularity_SPD} for all $T > 0$, the symmetric matrix $M(X)$ is positive definite and there exists $\bar \lambda > 0$ such that $\lambda_{\min}(M(X)) \geq \bar \lambda$
	\end{enumerate}
\end{assumption}

\begin{remark}\label{rem:regularity_diss} In the following, in particular in the proof of Lemma \ref{lem:ergodicity}, we will employ Assumption \ref{as:regularity}\ref{as:regularity_diss} for the whole drift of the SDE \eqref{eq:SDE_MS}, i.e., the function 
	\begin{equation}
		V^\epl(x) \defeq \alpha \cdot V(x) + p\left(\frac{x}{\epl}\right).
	\end{equation}
	Since $p \in C^\infty(\R)$ and is periodic, all derivatives of $p$ are in $L^\infty(\R)$. Therefore, the assumption above is sufficient for $V^\epl$ to satisfy Assumption \ref{as:regularity}\ref{as:regularity_diss} with different values for $a$ and $b$. In particular, assume Assumption \ref{as:regularity}\ref{as:regularity_diss} holds for $V$. Then, we have for all $\gamma > 0$ by Young's inequality
	\begin{equation}
	\begin{aligned}
		-(V^\epl)'(x)x &\leq a - bx^2 - \frac1\epl p'\left(\frac{x}{\epl}\right) x \\
		&\leq \left(a + \frac1{2\epl^2\gamma}\norm{p'}_{L^\infty(\R)}^2\right) - \left(b - \frac{\gamma}2\right) x^2.
	\end{aligned}
	\end{equation}
	Hence, Assumption \ref{as:regularity}\ref{as:regularity_diss} holds for $V^\epl$ with a coefficient $b$ which is arbitrarily close to the coefficient for $V$, alone.
	
\end{remark}

Under these assumptions, the MLE given in \eqref{eq:MLE_Simple} is indeed the unique minimizer of the likelihood function, as shown in \cite[Theorem 2.4]{PSV09}.

Let us consider the modified estimator of the drift coefficient obtained replacing $X$ with $X^\epl \defeq (X_t^\epl, 0 \leq t \leq T)$ solution of \eqref{eq:SDE_MS}, i.e.,
\begin{equation}\label{eq:MLE}
	\widehat A(X^\epl, T) \defeq \arg \min_{A \in \R^N} I(X^\epl \mid A) = - M^{-1}(X^\epl)h(X^\epl),
\end{equation}
where $I(X^\epl \mid A)$, the matrix $M(X^\epl)$ and the vector $h(X^\epl)$ are obtained replacing each occurrence of $X$ with $X^\epl$. In the following, we assume that Assumption \ref{as:regularity}\ref{as:regularity_SPD} holds as well for the matrix $M(X^\epl)$, and simply denote by $M \defeq M(X^\epl)$ and $h \defeq h(X^\epl)$ in case of no ambiguity. Given the convergence of $X^\epl \to X$ in the space of continuous stochastic processes, one would expect that the MLE \eqref{eq:MLE} would be asymptotically unbiased for the drift coefficient $A$ of the homogenized equation \eqref{eq:SDE_HOM}. Instead, it is possible to prove that in the asymptotic limit for $T \to \infty$ and $\epl \to 0$, the MLE tends to the drift coefficient $\alpha$ of the unhomogenized equation \eqref{eq:SDE_MS}. We report here this result, whose proof can be found for the case $N = 1$ in \cite[Theorem 3.4]{PaS07}. We remark that the proof for $N > 1$ follows directly from the one-dimensional case.

\begin{theorem}\label{thm:Bias} Let Assumption \ref{as:regularity} hold and let $X^\epl_0$ be distributed according to the invariant measure of the process $X^\epl$ solution of \eqref{eq:SDE_MS}. Then
	\begin{equation}
		\lim_{\epl \to 0}\lim_{T \to \infty} \widehat A(X^\epl, T) = \alpha, \quad \text{a.s.},
	\end{equation}
	where $\alpha$ is the drift coefficient of equation \eqref{eq:SDE_MS}.
\end{theorem}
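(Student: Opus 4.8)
The plan is to take the two limits in the stated order: first send $T \to \infty$ for fixed $\epl$ using ergodicity of the multiscale process, then send $\epl \to 0$ using an averaging argument for the associated invariant measures. Throughout I would exploit the gradient (overdamped Langevin) structure of \eqref{eq:SDE_MS}: writing $V^\epl(x) = \alpha \cdot V(x) + p(x/\epl)$ as in \cref{rem:regularity_diss}, the SDE reads $\d X_t^\epl = -(V^\epl)'(X_t^\epl)\dd t + \sqrt{2\sigma}\dd W_t$, and the process admits the explicit invariant measure $\mu^\epl(\d x) = (Z^\epl)^{-1}\exp(-V^\epl(x)/\sigma)\dd x$.

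For the inner limit I would invoke the ergodic theorem, the ergodicity being furnished by \cref{lem:ergodicity}, to get $M(X^\epl) \to M^\epl \defeq \int_\R V'(x)\otimes V'(x)\,\mu^\epl(\d x)$ almost surely. For $h(X^\epl)$ I would substitute the SDE and split it as $h(X^\epl) = -\frac1T\int_0^T V'(X_t^\epl)(V^\epl)'(X_t^\epl)\dd t + \frac{\sqrt{2\sigma}}{T}\int_0^T V'(X_t^\epl)\dd W_t$; the martingale term vanishes almost surely by the strong law of large numbers for martingales, and the first term converges to $-\int_\R V'(x)(V^\epl)'(x)\,\mu^\epl(\d x)$. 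The crucial simplification comes from integrating by parts against the Gibbs density: since $(V^\epl)'e^{-V^\epl/\sigma} = -\sigma\,\frac{\d}{\d x}e^{-V^\epl/\sigma}$, one finds componentwise $\int_\R V_i'(x)(V^\epl)'(x)\,\mu^\epl(\d x) = \sigma\int_\R V_i''(x)\,\mu^\epl(\d x)$, so that the large $\epl^{-1}p'(\cdot/\epl)$ contribution is absorbed and $\lim_{T\to\infty}\widehat A(X^\epl,T) = \sigma (M^\epl)^{-1}\int_\R V''(x)\,\mu^\epl(\d x)$ almost surely.

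For the outer limit I would establish the averaging convergence $\mu^\epl \rightharpoonup \mu^0$, where $\mu^0(\d x) \propto \exp(-\alpha\cdot V(x)/\sigma)\dd x$: the rapidly oscillating factor $e^{-p(x/\epl)/\sigma}$ converges weakly to its cell average $L^{-1}\int_0^L e^{-p(y)/\sigma}\dd y$, and this same average cancels between the numerator and the normalisation in every ratio $\int_\R f\,\mu^\epl(\d x)$. This yields $M^\epl \to M^0 \defeq \int_\R V'(x)\otimes V'(x)\,\mu^0(\d x)$ and $\int_\R V''(x)\,\mu^\epl(\d x) \to \int_\R V''(x)\,\mu^0(\d x)$, both matrices being invertible by \cref{as:regularity}\ref{as:regularity_SPD}. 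A final integration by parts, now against $\mu^0$, gives $M^0\alpha = \int_\R V'(x)\,(\alpha\cdot V'(x))\,\mu^0(\d x) = \sigma\int_\R V''(x)\,\mu^0(\d x)$, whence $\sigma(M^0)^{-1}\int_\R V''(x)\,\mu^0(\d x) = \alpha$, which is the claim.

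The main obstacle is analytic rather than structural: the observables $V'\otimes V'$ and $V''$ grow polynomially, so neither the ergodic averages nor the weak convergence $\mu^\epl\rightharpoonup\mu^0$ apply to them directly. I would therefore need uniform-in-$\epl$ moment bounds on $\mu^\epl$ to secure the requisite uniform integrability; these follow from the dissipativity bound of \cref{as:regularity}\ref{as:regularity_diss}, which by \cref{rem:regularity_diss} holds for $V^\epl$ with constants independent of $\epl$. The same moment control is what justifies the vanishing of the martingale term in the inner limit and the validity of the integration-by-parts steps (decay of boundary terms), so it is the technical heart of the argument.
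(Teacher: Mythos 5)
Your proof is correct and is essentially the argument the paper defers to \cite[Theorem 3.4]{PaS07} (the same machinery reappears, with $Z^\epl$ in place of one copy of $X^\epl$, in the paper's proof of \cref{thm:mainTheorem}): ergodic theorem plus the martingale strong law for the inner limit, integration by parts against the explicit Gibbs density to absorb the $\epl^{-1}p'(\cdot/\epl)$ term, weak convergence $\mu^\epl \rightharpoonup \mu^0$ via averaging of the oscillatory factor with the cell average cancelling in the normalised ratio, and a closing integration by parts identifying the limit as $\alpha$, with uniform-in-$\epl$ moment bounds correctly flagged as the technical underpinning. One cosmetic correction: for geometric ergodicity of $X^\epl$ alone you should cite \cite[Theorem 4.4]{MSH02}, as the paper does, rather than \cref{lem:ergodicity}, which concerns the pair $(X^\epl, Z^\epl)^\top$ and carries the extraneous condition $\delta > 1/(4b)$.
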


As anticipated in the introduction, the main existing tool for obtaining unbiased estimators in the literature is subsampling the data. In particular, let the dimension of the parameter $N = 1$, let $\delta > 0$ and let $T = n\delta$ with $n$ a positive integer. Then, a subsampled estimator for $A$ is given by
\begin{equation}
	\widehat A_\delta(X^\epl, T) = - \frac{\sum_{j=0}^{n-1} V'(X^\epl_{j\delta})\left(X^\epl_{(j+1)\delta} - X^\epl_{j\delta}\right)}{\delta \sum_{j=0}^{n-1} V'(X^\epl_{j\delta})^2},
\end{equation}
which is a discretized version of $\widehat A(X^\epl, T)$. It is possible to show \cite[Theorem 3.5]{PaS07} that choosing $\delta = \epl^\zeta$ with $\zeta \in (0, 1)$, then $\widehat A_\delta(X^\epl, T)$ is an asymptotically unbiased estimator of $A$ in the limit for $\epl \to 0$, in probability. Despite being widely employed in practice, estimators based on subsampling present some drawbacks, such as having a high variance, as mentioned in the introduction. In the following, we will introduce and analyse a novel approach for the drift estimation.

Estimating the effective diffusion coefficient $\Sigma$ of the homogenized SDE \eqref{eq:SDE_HOM} is as well a relevant problem. Indeed, knowing $\Sigma$ besides the drift coefficient $A$ gives a complete estimation of the effective model \eqref{eq:SDE_HOM}, which is effective for the multiscale data generated by \eqref{eq:SDE_MS} in the sense of homogenization theory. The standard approach for estimating the diffusion coefficient is to compute the quadratic variation of the path. In \cite[Theorem 3.4]{PaS07}, the authors show that this approach fails in case the data is not pre-processed, meaning that the quadratic variation of $X^\epl$ equals the diffusion coefficient $\sigma$ of \eqref{eq:SDE_MS}, even in the limit for $\epl \to 0$. They propose therefore the estimator $\widehat \Sigma_\delta$ based on subsampling that tends to the effective diffusion coefficient $\Sigma$ \cite[Theorem 3.6]{PaS07}. Despite the focus of this work being mainly the effective drift coefficient, we propose in the following an unbiased estimator for the effective diffusion coefficient which fits our novel approach.

\begin{remark}\label{rem:SemiParam} We note that our framework may be viewed in the semi-parametric setting as the one of \cite{KPK13}. In particular the functions $V_i$, $i=1, \ldots, N$ can be seen as the known basis functions of an expansion (e.g. a Taylor expansion) for the unknown confining potential $V_\alpha \colon \R \to \R$ given by
	\begin{equation}
		V_\alpha(x) = \sum_{i=1}^N \alpha_i V_i(x).
	\end{equation}
A numerical example highlighting the potential of our method in such a setting is given in Section \ref{sec:Num_Multi}.
\end{remark}

\begin{remark} Let us remark that for enhancing the clarity of the exposition, in this article we chose to focus on the case of a multi-dimensional parameter in the setting of one-dimensional diffusion processes. In fact, all the theory we present in the following could be generalized to the case of the $d$-dimensional version of the SDE \eqref{eq:SDE_MS}, which can be written as
	\begin{equation}
		\d X_t^\epl = - \sum_{i=1}^N \alpha_i \nabla V_i(X_t^\epl) \dd t - \frac1\epl \nabla p\left(\frac{X_t^\epl}{\epl}\right) \dd t + \sqrt{2\sigma} \dd W_t,
	\end{equation}
	where $W_t$ is a standard $d$-dimensional Brownian motion. Slight modifications of the proof demonstrate that analogous results to ours may be obtained in the $d$-dimensional case. 
\end{remark}

\section{The Filtered Data Approach}\label{sec:Filter}

In this section, we introduce and analyse a novel approach based on filtered data to address the issue that the MLE estimator, when confronted with multiscale data, is biased. Let $\beta, \delta > 0$ and let us consider a family of exponential kernel functions $k \colon \R^+ \to \R$ defined as
\begin{equation}\label{eq:filter}
k(r) = C_\beta \delta^{-1/\beta} e^{-r^\beta/\delta},
\end{equation}
where $C_{\beta}$ is the normalizing constant given by
\begin{equation}
C_\beta =  \beta \, \Gamma(1/\beta)^{-1},
\end{equation}
so that
\begin{equation}
	\int_0^\infty k(r) \dd r = 1,
\end{equation}

and where $\Gamma(\cdot)$ is the gamma function. We consider the process $Z^\epl \defeq (Z^\epl_t, 0 \leq t \leq T)$ defined by the weighted average
\begin{equation}\label{eq:ZDef}
	Z^{\epl}_t \defeq \int_0^t k(t - s)X^\epl_s \dd s.
\end{equation}
The process $Z^\epl$ can be interpreted as a smoothed version of the original trajectory $X^\epl$. In fact, in the field of signal processing the kernel \eqref{eq:filter} belongs to the class of low-pass linear time-invariant filters, which cut the high frequencies in a signal to highlight its slowest components. In the following, rigorous analysis is conducted only when $\beta = 1$. Nonetheless, numerical experiments show that for higher values of $\beta$ the performances of estimators computed employing the filter are more robust and qualitatively better. 

\begin{remark} Given a trajectory $X^\epl$, it is relatively inexpensive to compute $Z^\epl$ from a computational standpoint. In particular, the process $Z^\epl$ is the truncated convolution of the kernel with the process $X^\epl$. Hence, computational tools based on the Fast Fourier Transform (FFT) exist and allow to compute $Z^\epl$ fast component-wise. Moreover, the process $Z^\epl$ can be computed, in case $\beta = 1$, in a recursive manner and therefore ``online''.
\end{remark}

\begin{figure}[t]
	\centering
	\begin{tabular}{cc}
		\includegraphics[]{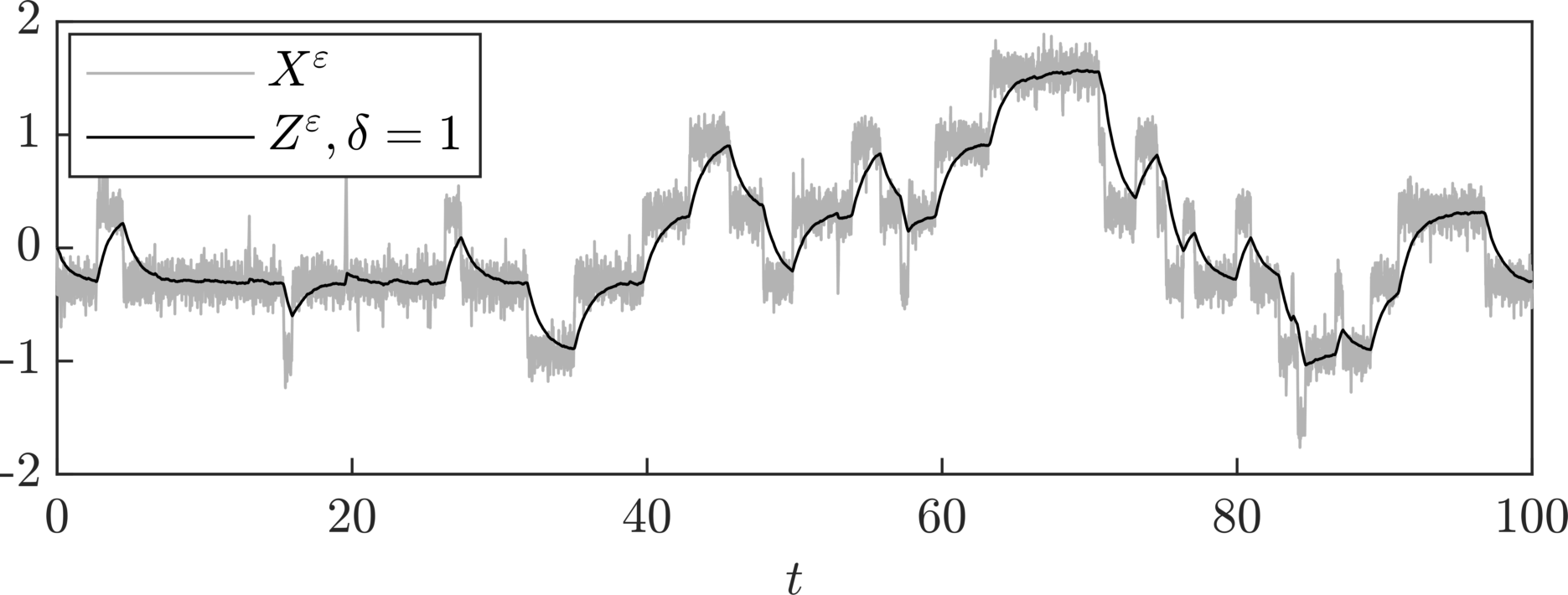} & \includegraphics[]{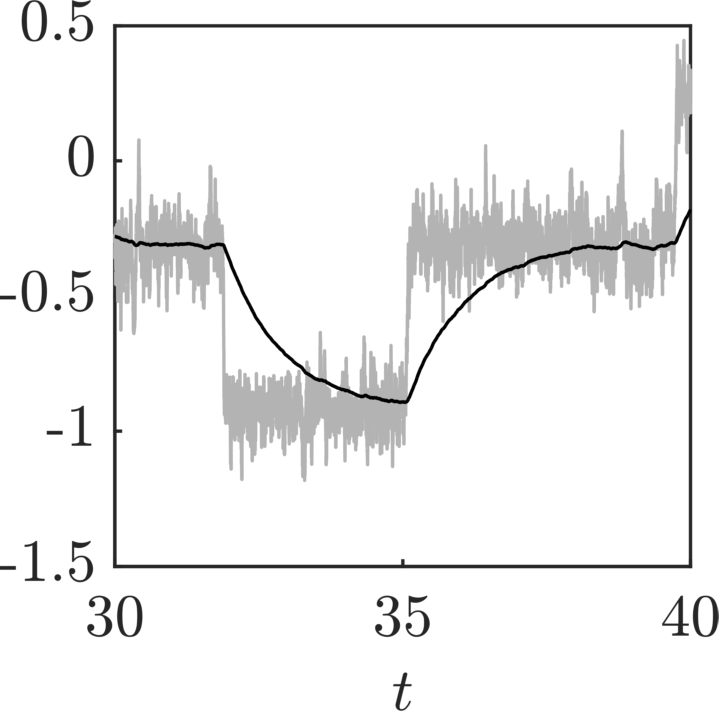}\\
		\includegraphics[]{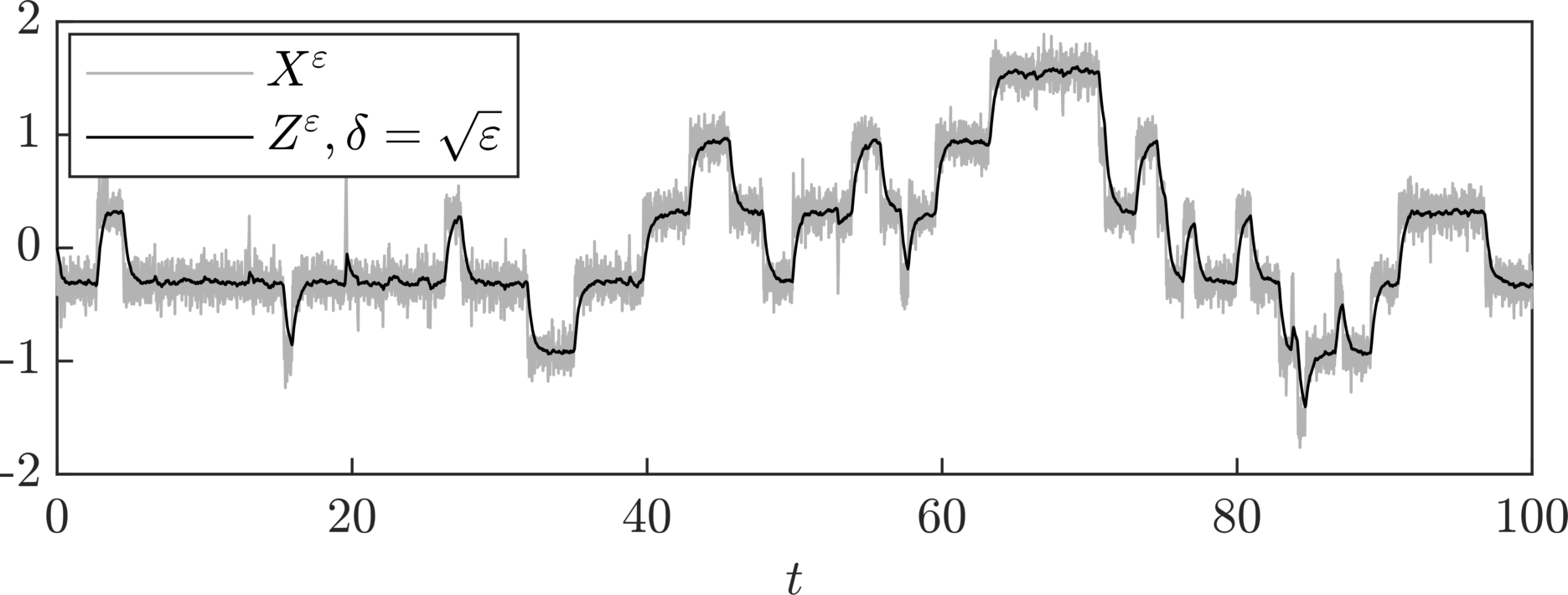} & \includegraphics[]{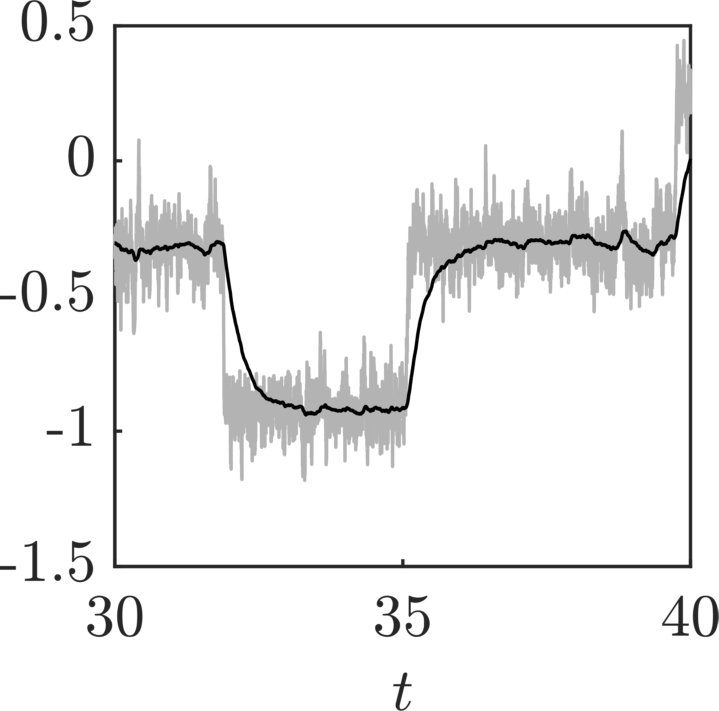} \\
		\includegraphics[]{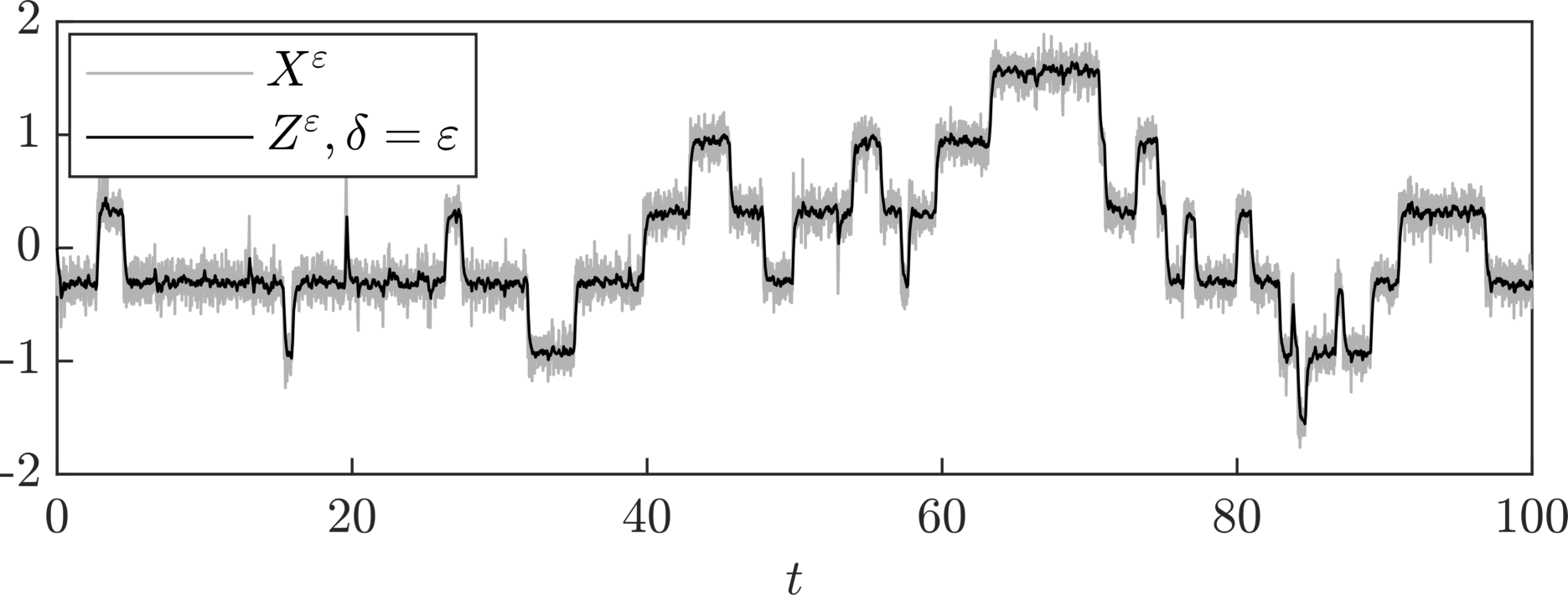}& \includegraphics[]{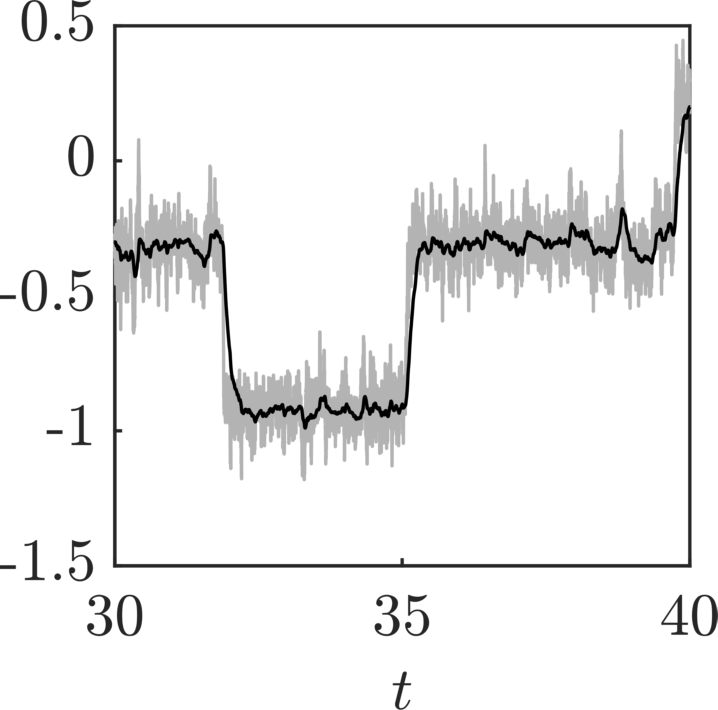}
	\end{tabular}
	\caption{Filtering a trajectory $X^\epl$ obtained with $V(x) = x^2 / 2$, $p(y) = \cos(y)$, $\alpha = 1$, $\sigma = 0.5$ and $\epl = 0.1$. The filtering width is $\delta = \{1, \sqrt{\epl}, \epl\}$ from top to bottom, respectively, and $\beta = 1$.}
	\label{fig:filter}
\end{figure}

Given a trajectory $X^\epl$ and the filtered data $Z^\epl$, the estimator of the drift coefficient we propose is given by
\begin{equation}\label{eq:AHatMixed}
	\widehat A_k(X^\epl, T) = - \widetilde M^{-1}(X^\epl) \widetilde h(X^\epl),
\end{equation}
where we employ the subscript $k$ for reference to the filter's kernel in \eqref{eq:filter}, and where
\begin{equation}\label{eq:MandHTilde}
	\widetilde M(X^\epl) = \frac{1}{T} \int_0^T V'(Z^\epl_t) \otimes V'(X^\epl_t) \dd t, \qquad \text{and} \qquad \widetilde h(X^\epl) = \frac{1}{T} \int_0^T V'(Z^\epl_t) \dd X^\epl_t.
\end{equation}
For economy of notation we drop explicit reference to the dependence of $\widetilde M$ and $\widetilde h$ on $X^\epl$. Let us remark that the formula above is obtained from \eqref{eq:MLE} by replacing only one instance of $X_t^\epl$ with $Z_t^\epl$ in both $M$ and $h$. In particular, it is fundamental for proving unbiasedness to keep in the definition of $h$ the differential of the original process $\d X^\epl_t$ (see Remark \ref{rem:hXZ}). Let us furthermore remark that $\widehat A_k(X^\epl, T)$ need not be the minimizer of some likelihood function based on filtered data. In fact, if one were to replace $Z_t^\epl$ directly in \eqref{eq:Likelihood}, the symmetric part of the matrix $\widetilde M$ would appear and $\widehat A_k(X^\epl, T)$ would not be the minimizer. Therefore, the estimator $\widehat A_k(X^\epl, T)$ has to be thought of as a perturbation of $\widehat A(X^\epl, T)$, directly at the level of estimators and after the maximization procedure. The only theoretical guarantee which is still needed for the well-posedness of $\widehat A_k(X^\epl, T)$ is for $\widetilde M$ to be invertible, which we assume to be true and which we observed to hold in practice.

We now consider the diffusion coefficient, and propose the estimator for $\Sigma$ in \eqref{eq:SDE_HOM} given by
\begin{equation} \label{eq:SigmaHat}
	\widehat \Sigma_k(X^\epl,T) \defeq \frac{1}{\delta T} \int_0^T \left( X_t^\epl - Z_t^\epl \right)^2 \dd t,
\end{equation} 
where again we employ the subscript $k$ for reference to the kernel \eqref{eq:filter} of the filter. As we will show in the following, and in particular in \cref{thm:diffusion_unbiasedness}, the estimator $\widehat \Sigma$ is unbiased for the effective diffusion coefficient $\Sigma$ in case $\beta = 1$ and when we filter data at the multiscale regime, i.e., when $\delta$ is a vanishing function of $\epl$.

Let us from now on consider $\beta = 1$. For this value of $\beta$, the parameter $\delta$ appearing in \eqref{eq:filter} regulates the width of the filtering window. In practice, larger values of $\delta$ will lead to trajectories which are smoother and for which fast-scale oscillations are practically canceled. Let us remark that the filtering width resembles the subsampling step employed for the estimator $\widehat A_\delta(X^\epl, T)$ introduced and analyzed in \cite{PaS07}. For subsampling, the choice guaranteeing asymptotically unbiased results is $\delta = \epl^\zeta$ with $\zeta \in (0, 1)$, and a similar analysis is due for our technique. For visualization purposes, we depict in Figure \ref{fig:filter} the filtered trajectory $Z^\epl$ for three different values of $\delta$, namely $\delta = \{1, \sqrt{\epl}, \epl\}$. With $\delta = 1$, all oscillations at the fast scale are canceled and the filtered trajectory $Z^\epl$ presents only slow-scale variations. Reducing the value of $\delta$, fast-scale oscillations are progressively taken into account. 

In the following, we first focus on the ergodic properties of the process $Z^\epl$ when it is coupled with the process $X^\epl$. This analysis is practically independent of the choice of $\delta$, and is therefore presented on its own. Then, we focus on two different cases which depend on the choice of the width $\delta$ of the filter. First, in Section \ref{sec:Slow}, we consider $\delta$ to be independent of $\epl$, and therefore we filter at the speed of the homogenized process. In this case, we are able to prove that our estimator of the drift coefficient of the homogenized equation is asymptotically unbiased almost surely. This result will be presented in Theorem \ref{thm:mainTheorem}. We then move on in Section \ref{sec:Fast} to the case $\delta \propto \epl^\zeta$, which corresponds to filtering the data at the speed of the multiscale process. In this case, we show that under some conditions on the exponent $\zeta$, we can still obtain estimators which are asymptotically unbiased in probability. This result is proved in Theorem \ref{thm:mainTheorem_zeta}. For this second case, we widely employ techniques and estimates which come from \cite{PaS07}.

\subsection{Ergodic Properties}\label{sec:ergodic}

Let us consider the filtering kernel \eqref{eq:filter} with $\beta = 1$, i.e.,
\begin{equation}\label{eq:filter_beta1}
	k(r) = \frac{1}{\delta} e^{-r/\delta}.
\end{equation}
In this case, Leibniz integral rule yields the equality
\begin{equation}
	\d Z^\epl_t = k(0) X^\epl_t \dd t + \int_0^t k'(t-s) X^\epl_s \dd s \dd t = \frac{1}{\delta} \left ( X^\epl_t - Z^\epl_t \right ) \dd t,
\end{equation}
which can be interpreted as an ordinary differential equation for $Z_t^\epl$ driven by the stochastic signal $X^\epl$. Considering the processes $X^\epl$ and $Z^\epl$ together, we obtain the system of two one-dimensional SDEs
\begin{equation}
\label{eq:systemSDE}
\begin{aligned}
\d X_t^\epl &= -\alpha \cdot V'(X_t^\epl) \dd t - \frac1\epl p'\left(\frac{X_t^\epl}\epl\right) \dd t+ \sqrt{2\sigma} \dd W_t, \\
\d Z^\epl_t &= \frac{1}{\delta} \left ( X^\epl_t - Z^\epl_t \right ) \dd t.
\end{aligned}
\end{equation}
The first ingredient for verifying the ergodic properties of the two-dimensional process $(X^\epl, Z^\epl)^\top \defeq ((X^\epl_t, Z^\epl_t)^\top, 0 \leq t \leq T)$ is verifying that the measure induced by the stochastic process admits a smooth density with respect to the Lebesgue measure. Since noise is present only on the first component, this is a consequence of the theory of hypo-ellipticity, as summarized in the following Lemma, whose proof is given in Appendix \ref{ap:ProofsErgodic}.

\begin{lemma}\label{lem:density} Let $(X^\epl, Z^\epl)^\top$ be the solution of \eqref{eq:systemSDE} and let $\mu^\epl_t$ be the measure induced by the joint process at time $t$. Then, the measure $\mu^\epl_t$ admits a smooth density $\rho^\epl_t$ with respect to the Lebesgue measure.
\end{lemma}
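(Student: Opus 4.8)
The plan is to invoke Hörmander's hypoellipticity theorem for the generator of the joint process $(X^\epl, Z^\epl)^\top$ solving \eqref{eq:systemSDE}. The essential feature is that the noise enters only the first component, so the diffusion matrix is degenerate; what saves us is that the drift transports this noise into the $z$-direction. Since the diffusion coefficient $\sqrt{2\sigma}$ is constant, the Itô and Stratonovich formulations coincide, and I can read off the two vector fields directly: the drift field
\[
\mathcal{X}_0 = \left(-\alpha \cdot V'(x) - \frac1\epl p'\!\left(\frac{x}\epl\right)\right)\partial_x + \frac1\delta(x - z)\,\partial_z,
\]
and the single diffusion field $\mathcal{X}_1 = \sqrt{2\sigma}\,\partial_x$, so that the generator is $\diffL = \mathcal{X}_0 + \tfrac12 \mathcal{X}_1^2$. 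The field $\mathcal{X}_1$ alone spans only the $\partial_x$ direction, and the content of the lemma is precisely that a bracket recovers the missing direction.

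First I would verify the parabolic Hörmander condition by computing a single Lie bracket. A direct computation gives
\[
[\mathcal{X}_0, \mathcal{X}_1] = \sqrt{2\sigma}\left(\alpha\cdot V''(x) + \frac1{\epl^2}p''\!\left(\frac{x}\epl\right)\right)\partial_x - \frac{\sqrt{2\sigma}}\delta\,\partial_z,
\]
whose $\partial_z$-component $-\sqrt{2\sigma}/\delta$ is nonzero at every point because $\sigma, \delta > 0$. Hence for all $(x,z) \in \R^2$ the vectors $\mathcal{X}_1$ and $[\mathcal{X}_0, \mathcal{X}_1]$ are linearly independent: the determinant of the matrix having these two vectors as columns equals $-2\sigma/\delta \neq 0$, regardless of the value of the first component. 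They therefore span the tangent space $\R^2$, so the Hörmander bracket condition closes already at the first bracket level, uniformly in $(x,z)$.

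Next, since the coefficients are smooth ($p \in \mathcal C^\infty(\R)$ and each $V_i \in \mathcal C^\infty(\R)$ by Assumption \ref{as:regularity}), Hörmander's theorem applies to the hypoelliptic operator $\partial_t - \diffL$ and yields a transition density $p_t^\epl\big((x_0,z_0),(x,z)\big)$ that is jointly smooth in all arguments for each $t > 0$. To pass from the transition density to the density of the law $\mu^\epl_t$ of the joint process at time $t$, I would write
\[
\rho^\epl_t(x,z) = \int_{\R^2} p_t^\epl\big((x_0,z_0),(x,z)\big)\,\mu^\epl_0(\d x_0\, \d z_0)
\]
and differentiate under the integral sign, so that $\rho^\epl_t$ inherits the smoothness of $p_t^\epl$ in its terminal argument.

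I do not anticipate a serious obstacle: the bracket condition closes at first order and the computation is elementary. The two points deserving care are the Stratonovich bookkeeping (trivial here, as the constant diffusion coefficient makes the Itô–Stratonovich correction vanish) and the justification for differentiating under the integral in the last step; the latter is routine given the Gaussian-type decay of hypoelliptic heat kernels together with the non-explosivity of $(X^\epl, Z^\epl)^\top$ guaranteed by the dissipativity in Assumption \ref{as:regularity}\ref{as:regularity_diss} (cf.\ Remark \ref{rem:regularity_diss}).
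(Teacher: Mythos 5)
Your proposal is correct and follows essentially the same route as the paper: the same splitting of the generator into $\mathcal X_0 + \sigma\mathcal X_1^2$, the same single Lie bracket $[\mathcal X_0,\mathcal X_1]$ whose nonvanishing $\partial_z$-component $-1/\delta$ (times $\sqrt{2\sigma}$ in your normalization) closes the Hörmander condition at first order, and the same appeal to Hörmander's theorem. Your final step of integrating the smooth transition kernel against the initial law is a detail the paper leaves implicit, but it is a correct and harmless refinement rather than a different argument.
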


Once it is established that the law of the process admits a smooth density for all times $t>0$, which satisfies a time-dependent Fokker--Planck equation, we are interested in the limiting properties of this law. In particular, we know that the process $X^\epl$ alone is geometrically ergodic \cite[Theorem 4.4]{MSH02}, and we wish the couple $(X^\epl, Z^\epl)^\top$ to inherit the same property. The following Lemma guarantees that the couple is indeed geometrically ergodic, and its proof is given in Appendix \ref{ap:ProofsErgodic}.

\begin{lemma}\label{lem:ergodicity} Let Assumption \ref{as:regularity} hold and let $b > 0$ be given in Assumption \ref{as:regularity}\ref{as:regularity_diss}. Then, if $\delta > 1/(4b)$, the process $(X^\epl, Z^\epl)^\top$ solution of \eqref{eq:systemSDE} is geometrically ergodic, i.e., there exists $C, \lambda > 0$ such that for all measurable $f\colon \R^2\to \R$ such that for some integer $q > 0$ 
	\begin{equation}
		f(x, z) \leq 1 + \norm{\begin{pmatrix} x & z \end{pmatrix}^\top}_2^q,
	\end{equation}
	it holds
	\begin{equation}
		\abs{\E f(X^\epl_t, Z^\epl_t) - \int_\R\int_\R f(x, z) \rho^\epl(x, z) \dd x \dd z} \leq C\left(1 + \norm{\begin{pmatrix} X^\epl_0 & Z^\epl_0 \end{pmatrix}^\top}_2^q \right)e^{-\lambda t},
	\end{equation}
	for $\rho^\epl$-a.e. couple $(X_0^\epl, Z_0^\epl)^\top$, where $\E$ denotes expectation with respect to the Wiener measure, and $\rho^\epl$ is the solution to the stationary Fokker--Planck equation
	\begin{equation}
	\label{eq:FPsystem}
	\sigma \partial^2_{xx} \rho^\epl(x,z) +  \partial_x \left( \left( \alpha \cdot V'(x) + \frac{1}{\epl} p' \left ( \frac{x}\epl \right ) \right ) \rho^\epl(x,z) \right) + \frac{1}{\delta} \partial_z \left((z - x) \rho^\epl(x,z)\right) = 0.
	\end{equation}
\end{lemma}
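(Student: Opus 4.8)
The plan is to establish geometric ergodicity through the classical Lyapunov/minorization framework for Markov processes with degenerate additive noise, in the spirit of \cite{MSH02}. Two ingredients are needed: a Lyapunov drift condition $\diffL^\epl U \leq -cU + d$ for the generator $\diffL^\epl$ of the coupled process, and a minorization (local Doeblin) condition on compact sets. The latter is exactly what Lemma \ref{lem:density} supplies: hypoellipticity yields a smooth transition density, which together with a controllability argument is strictly positive, so the process is topologically irreducible and the minorization holds on any compact sublevel set of $U$. Note also that the combined drift field is globally Lipschitz — $V'$ is Lipschitz by Assumption \ref{as:regularity}\ref{as:regularity_Lip}, all derivatives of $p$ are bounded since $p$ is smooth and periodic, and the coupling $\tfrac1\delta(x-z)$ is linear — so existence, uniqueness and the existence of the invariant density $\rho^\epl$ are not at issue.

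For the drift condition I would take the quadratic Lyapunov function $U(x,z) = 1 + x^2 + z^2$, whose generator, writing $b^\epl(x) \defeq -\alpha\cdot V'(x) - \tfrac1\epl p'(x/\epl) = -(V^\epl)'(x)$, is
\[
\diffL^\epl U = 2\sigma + 2x\,b^\epl(x) + \frac{2}{\delta}z(x - z).
\]
The key step is to control the two drift contributions. By Remark \ref{rem:regularity_diss} the dissipativity estimate $x\,b^\epl(x) = -(V^\epl)'(x)\,x \leq a' - b'x^2$ holds with $b'$ arbitrarily close to the constant $b$ of Assumption \ref{as:regularity}\ref{as:regularity_diss}, while Young's inequality gives $\tfrac{2}{\delta}zx \leq \tfrac{\eta}{\delta}x^2 + \tfrac{1}{\eta\delta}z^2$ for any $\eta > 0$. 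Collecting terms,
\[
\diffL^\epl U \leq 2\sigma + 2a' - \left(2b' - \frac{\eta}{\delta}\right)x^2 - \frac{1}{\delta}\left(2 - \frac{1}{\eta}\right)z^2.
\]
Both quadratic coefficients are strictly negative precisely when $\eta$ can be chosen in the interval $(1/2,\, 2b'\delta)$, which is nonempty if and only if $\delta > 1/(4b')$; since $b'$ may be taken arbitrarily close to $b$, this is exactly the hypothesis $\delta > 1/(4b)$. This yields $\diffL^\epl U \leq -cU + d$ for some $c,d>0$; the constant $d$ may depend on $\epl$ through $a'$, but the rate and the threshold on $\delta$ do not.

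To reach the stated conclusion for every test function dominated by $1 + \norm{(x,z)^\top}_2^q$, I would verify the analogous drift condition for the powers $U^p$, $p\geq 1$. Applying $\diffL^\epl$ gives
\[
\diffL^\epl U^p = p\,U^{p-1}\diffL^\epl U + 4\sigma\,p(p-1)\,x^2\,U^{p-2},
\]
where the leading term is bounded by $-cp\,U^p$ plus lower-order contributions, and the second-order correction satisfies $4\sigma p(p-1)x^2 U^{p-2} \leq 4\sigma p(p-1)U^{p-1}$ since $x^2 \leq U$. As $U^{p-1} = o(U^p)$, both corrections are absorbed into $-\tfrac{cp}{2}U^p + d_p$, so the drift condition holds for all $p$ under the same threshold on $\delta$. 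Choosing $p$ with $2p \geq q$, the function $U^p$ dominates $1 + \norm{(x,z)^\top}_2^q$, and combining the drift condition with the minorization delivers geometric convergence in the $U^p$-weighted norm, which is precisely the claimed bound.

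The main obstacle is the degeneracy of the noise: since the $Z$-component carries no Brownian forcing, the usual uniform-ellipticity route to the minorization is unavailable and one must lean on the hypoelliptic smoothing of Lemma \ref{lem:density}, supplemented by controllability, to guarantee strict positivity of the density and hence a minorization that is uniform on compacts. The second delicate point, already visible in the computation above, is that the coupling term in $\diffL^\epl U$ has no definite sign and must be balanced against the dissipation in $x$; it is exactly this balance, mediated by the Young parameter $\eta$, that forces the sharp threshold $\delta > 1/(4b)$.
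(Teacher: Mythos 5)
Your proposal is correct and follows essentially the same route as the paper: your computation of $\diffL^\epl U$ for $U(x,z)=1+x^2+z^2$ is exactly the paper's dissipativity estimate $\mathcal S(x,z)=F(x,z)\cdot(x,z)\leq a-C\norm{(x,z)^\top}_2^2$ (with your Young parameter $\eta$ playing the role of $1/\gamma$, giving the identical sharp threshold $\delta>1/(4b)$), combined with hypoellipticity-based irreducibility in the spirit of \cite{MSH02}. The only difference is bookkeeping: the paper invokes \cite[Theorem 4.4]{MSH02} as a black box (citing the example on p.~199 for the irreducibility Condition 4.3), whereas you unpack part of that machinery yourself (powers $U^p$ of the Lyapunov function and the minorization on compacts), which the cited theorem already handles.
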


\begin{remark} The condition $\delta > 1 / (4b)$ is not very restrictive. Let the parameter dimension $N = 1$ and let $V(x) \propto x^{2r}$ for an integer $r > 1$. Then, Assumption \ref{as:regularity}\ref{as:regularity_diss} holds for an arbitrarily large $b > 0$. Therefore, the parameter of the filter $\delta$ can be chosen along the entire positive real axis. A similar argument can be employed for higher dimensions $N > 1$.
\end{remark}

In a general case, it is not possible to find an explicit solution to \eqref{eq:FPsystem}. Nevertheless, it is possible to show some relevant properties of the solution itself, which are summarized in the following Lemma, whose proof is given in Appendix \ref{ap:ProofsErgodic}.

\begin{lemma}\label{lem:FPMarginal} Under the assumptions of Lemma \ref{lem:ergodicity}, let $\rho^\epl$ be the solution of \eqref{eq:FPsystem} and let us write 
\begin{equation}\label{eq:densityDecomposition}
	\rho^\epl(x, z) = \phi^\epl(x)\psi^\epl(z)R^\epl(x,z),
\end{equation}
where $\phi^\epl$ and $\psi^\epl$ are the marginal densities of $X^\epl$ and $Z^\epl$ respectively, i.e., 
\begin{equation}
	\phi^\epl(x) = \int_{\R} \rho^\epl(x,z) \dd z, \quad  \psi^\epl(z) = \int_{\R} \rho^\epl(x,z) \dd x.
\end{equation}
Then, it holds
\begin{equation}\label{eq:marginalX}
	\phi^\epl(x) = \frac{1}{C_{\phi^\epl}} \exp\left(-\frac{1}{\sigma} \alpha \cdot V(x) - \frac{1}{\sigma} p \left ( \frac{x}\epl \right )\right),
\end{equation}
where
\begin{equation}
	C_{\phi^\epl} = \int_{\R} \exp\left(-\frac{1}{\sigma} \alpha \cdot V(x) - \frac{1}{\sigma} p \left ( \frac{x}\epl \right )\right) \dd x.
\end{equation}
Moreover, it holds
\begin{equation}\label{eq:MagicEquality}
	\sigma \delta \int_{\R} \int_{\R} V'(z) \phi^\epl(x) \psi^\epl(z) \partial_x R^\epl(x,z) \dd x \dd z = \E^{\rho^\epl}[(X^\epl - Z^\epl)^2 V''(Z^\epl)].
\end{equation}
\end{lemma}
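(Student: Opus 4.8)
The plan is to prove the two claims in turn, in both cases exploiting the stationary Fokker--Planck equation \eqref{eq:FPsystem} together with the factorization \eqref{eq:densityDecomposition}. Throughout I write $(V^\epl)'(x) = \alpha \cdot V'(x) + \frac1\epl p'(x/\epl)$ for the full drift of \eqref{eq:SDE_MS}, as in Remark \ref{rem:regularity_diss}. For the marginal density \eqref{eq:marginalX}, I would integrate \eqref{eq:FPsystem} over $z \in \R$. The term $\frac1\delta \partial_z((z-x)\rho^\epl)$ then contributes only a boundary term that vanishes, since $\rho^\epl$ and the moments of $Z^\epl$ decay at infinity; commuting $\partial_x$ with the $z$-integral (legitimate because $(V^\epl)'$ is independent of $z$) leaves the one-dimensional stationary equation $\sigma \partial_{xx}\phi^\epl + \partial_x((V^\epl)'\phi^\epl) = 0$ for $\phi^\epl = \int_\R \rho^\epl \dd z$. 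This is exactly the stationary Fokker--Planck equation for the process $X^\epl$ alone, whose confining structure (Assumption \ref{as:regularity}) forces the probability current to be zero; integrating once yields $\phi^\epl \propto \exp(-V^\epl/\sigma)$, which is \eqref{eq:marginalX} after normalization.

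For the identity \eqref{eq:MagicEquality}, I would begin from its left-hand side and eliminate $\partial_x R^\epl$. Differentiating \eqref{eq:densityDecomposition} in $x$ and using $\partial_x \phi^\epl = -\frac1\sigma (V^\epl)'\phi^\epl$ from the previous step (and $\partial_x \psi^\epl = 0$) gives the pointwise relation $\phi^\epl \psi^\epl\, \partial_x R^\epl = \partial_x \rho^\epl + \frac1\sigma (V^\epl)'\rho^\epl$. Substituting this, the $\partial_x \rho^\epl$ contribution integrates to zero in $x$ (pulling $V'(z)$ out of the inner integral and discarding a vanishing boundary term), which leaves
\[
\sigma\delta\int_\R\int_\R V'(z)\,\phi^\epl(x)\,\psi^\epl(z)\,\partial_x R^\epl \dd x \dd z = \delta\,\E^{\rho^\epl}\left[V'(Z^\epl)\,(V^\epl)'(X^\epl)\right].
\]

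It then remains to show that $\delta\,\E^{\rho^\epl}[V'(Z^\epl)(V^\epl)'(X^\epl)] = \E^{\rho^\epl}[(X^\epl - Z^\epl)^2 V''(Z^\epl)]$, and for this I would test \eqref{eq:FPsystem} against two carefully chosen functions, integrating by parts so that all derivatives fall on the test function. Testing against $g(x,z) = x\,V'(z)$ produces, after discarding boundary terms, the identity $\delta\,\E^{\rho^\epl}[(V^\epl)'(X^\epl)V'(Z^\epl)] = \E^{\rho^\epl}[(X^\epl - Z^\epl)X^\epl V''(Z^\epl)]$, since $g$ is linear in $x$ and only the drift terms survive. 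Testing against $g(x,z) = f(z)$ with $f'(z) = z\,V''(z)$ gives $\E^{\rho^\epl}[(X^\epl - Z^\epl)Z^\epl V''(Z^\epl)] = 0$, because only the $\frac1\delta(x-z)\partial_z$ part of the generator acts on a function of $z$ alone. Subtracting the second identity from the first replaces the factor $X^\epl$ by $X^\epl - Z^\epl$ and delivers \eqref{eq:MagicEquality}.

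The main obstacle is purely technical: justifying that every boundary term vanishes and that each integral converges absolutely, since the test functions $x\,V'(z)$ and $z\,V''(z)$ grow polynomially. This is where I would invoke the polynomial growth bounds on $V'$ and $V''$ from Assumption \ref{as:regularity} together with the consequence of geometric ergodicity (Lemma \ref{lem:ergodicity}) that $\rho^\epl$ possesses finite moments of all orders and decays rapidly enough for the integrations by parts to be valid.
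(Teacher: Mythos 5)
Your proposal is correct and takes essentially the same route as the paper: the marginal \eqref{eq:marginalX} is obtained by integrating \eqref{eq:FPsystem} over $z$, and \eqref{eq:MagicEquality} follows from the weak formulation of the stationary Fokker--Planck equation with polynomially growing test functions, justified by the moment bounds from ergodicity. Indeed, your two test functions are equivalent to the paper's single choice $f(x,z) = (x-z)V'(z) + V(z)$, since $xV'(z) - \left(zV'(z) - V(z)\right) = (x-z)V'(z) + V(z)$, and your pointwise relation $\phi^\epl\psi^\epl\,\partial_x R^\epl = \partial_x\rho^\epl + \tfrac{1}{\sigma}\left(\alpha\cdot V'(x) + \tfrac1\epl p'(x/\epl)\right)\rho^\epl$ is exactly what the paper uses to rewrite \eqref{eq:FPsystem} in divergence form before testing.
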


\begin{remark} Lemma \ref{lem:FPMarginal}, and in particular the equality \eqref{eq:MagicEquality}, plays a fundamental role in the proof of unbiasedness of the estimator based on filtered data. In particular, this equality allows to bypass the explicit knowledge of the function $R(x, z)$, which governs the correlation between the processes $X^\epl$ and $Z^\epl$ at stationarity, for which a closed-form expression is not available in the general case.
\end{remark}

\begin{remark}\label{rem:hXZ} Let us return to the definition of $\widehat A_k$ and replace the differential $\d X^\epl_t$ with $\d Z^\epl_t$ in $\widetilde h$. In this case, it holds
	\begin{equation}
		\lim_{T \to \infty} \frac1T \int_0^T V'(Z_t^\epl) \dd Z_t^\epl = \lim_{T \to \infty} \frac1{\delta T} \int_0^T V'(Z_t^\epl) (X_t^\epl - Z_t^\epl) \dd t = \frac1\delta \E^{\rho^\epl} \left[ V'(Z^\epl) (X^\epl - Z^\epl) \right] = 0,
	\end{equation}
	where the last equality is obtained as in the proof of Lemma \ref{lem:FPMarginal}, with the choice $f(x,z) = V(z)$ at the last line. Therefore, we stress again that it is indeed necessary to employ the original differential $\d X^\epl_t$ in the vector $\widetilde h$ in the definition \eqref{eq:AHatMixed} of $\widehat A_k^\epl$. 
\end{remark}

\begin{remark} Let us consider the kernel \eqref{eq:filter} with $\beta > 1$. In this case, the steps leading to the system \eqref{eq:systemSDE} do not yield a system of Itô SDEs, but of stochastic delay differential equations. The analysis of the estimator in case $\beta > 1$ is therefore based on different arguments than the one we present in this work.
\end{remark}

\subsection{Filtered Data in the Homogenized Regime}\label{sec:Slow}

In this section, we analyze the behavior of the estimator $\widehat A_k(X^\epl, T)$ based on filtered data given in \eqref{eq:AHatMixed} when the filtering width $\delta$ is independent of $\epl$. The analysis in this case is based on the convergence of the couple $(X^\epl, Z^\epl)^\top$ with respect to the multiscale parameter $\epl \to 0$. In particular, it is known that the invariant measure of $X^\epl$ converges weakly to the invariant measure of $X$, the solution of the homogenized equation \eqref{eq:SDE_HOM}. The following result guarantees the same kind of convergence for the couple $(X^\epl, Z^\epl)^\top$.

\begin{lemma}\label{lem:convMeasure} Under Assumption \ref{as:regularity}, let $\mu^\epl$ be the invariant measure of the couple $(X^\epl, Z^\epl)^\top$. If $\delta$ is independent of $\epl$, then the measure $\mu^\epl$ converges weakly to the measure $\mu^0(\d x, \d z) = \rho^0(x, z) \dd x \dd z$, whose density $\rho^0$ is the unique solution of the Fokker--Planck equation
\begin{equation} \label{eq:FPsystem_homogenized}
	\Sigma \partial^2_{xx} \rho^0(x,z) + \partial_x\left( A \cdot V'(x) \rho^0(x,z) \right) + \frac{1}{\delta}\partial_z\left((z - x) \rho^0(x,z) \right) = 0,
\end{equation}
where $A$ and $\Sigma$ are the coefficients of the homogenized equation \eqref{eq:SDE_HOM}.
\end{lemma}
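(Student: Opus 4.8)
The plan is to treat this as a convergence-of-invariant-measures problem and to reduce it to an interchange of the limits $\epl\to0$ and $t\to\infty$, using two ingredients that are essentially available: the weak convergence $X^\epl\to X$ in $\mathcal C^0([0,T];\R)$ guaranteed by homogenization theory \cite{BLP78}, and the geometric ergodicity of Lemma \ref{lem:ergodicity}. The key structural observation is that, by \eqref{eq:ZDef}, the filter variable is a continuous functional of the driving path and the coupling $\frac1\delta(x-z)\partial_z$ in \eqref{eq:systemSDE} carries no fast scale, so the entire $\epl$-dependence sits in the first component.

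First I would start the joint system \eqref{eq:systemSDE} from a fixed deterministic point $(x_0,0)$ (note $Z^\epl_0 = 0$ by \eqref{eq:ZDef}). The solution map $X^\epl\mapsto Z^\epl$ of \eqref{eq:ZDef}, equivalently of the ODE $\d Z^\epl_t = \frac1\delta(X^\epl_t - Z^\epl_t)\dd t$, is continuous (indeed Lipschitz, by Gronwall) from $\mathcal C^0([0,T];\R)$ to itself, so by the continuous mapping theorem $(X^\epl,Z^\epl)^\top\to(X,Z)^\top$ in law on path space, where $Z$ is the analogous filtering of the homogenized process $X$. In particular, for each fixed $t$ the time-$t$ law converges, $\operatorname{law}(X^\epl_t,Z^\epl_t)\rightharpoonup\operatorname{law}(X_t,Z_t)$. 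Second, I would record that the rate in Lemma \ref{lem:ergodicity} is uniform in $\epl$: by Remark \ref{rem:regularity_diss} the dissipativity constant $b$ of the full drift $V^\epl$ can be taken independent of $\epl$, so the Lyapunov estimate, and hence the constants $C$ and $\lambda$, do not degenerate as $\epl\to0$.

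These two ingredients combine through a triangle inequality. For $f$ in the test class of Lemma \ref{lem:ergodicity} and any $t$,
\begin{equation*}
\abs{\mu^\epl(f) - \mu^0(f)} \leq \abs{\mu^\epl(f) - \E f(X^\epl_t,Z^\epl_t)} + \abs{\E f(X^\epl_t,Z^\epl_t) - \E f(X_t,Z_t)} + \abs{\E f(X_t,Z_t) - \mu^0(f)},
\end{equation*}
where $\mu^0$ denotes the invariant law of the limit process $(X,Z)^\top$. By the uniform ergodicity of Lemma \ref{lem:ergodicity} the first term is at most $C(1+\norm{(x_0,0)^\top}_2^q)e^{-\lambda t}$ uniformly in $\epl$, and the third is controlled likewise by the ergodicity of the limit process; the middle term vanishes as $\epl\to0$ for fixed $t$ by the previous step. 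Choosing $t$ large and then letting $\epl\to0$ gives $\mu^\epl\rightharpoonup\mu^0$, first for bounded Lipschitz $f$ and then on the stated polynomial class through the uniform moment bounds, which supply the requisite uniform integrability.

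It remains to identify $\mu^0$ with the measure in the statement. The limit dynamics $\d X_t = -A\cdot V'(X_t)\dd t + \sqrt{2\Sigma}\dd W_t$, $\d Z_t = \frac1\delta(X_t - Z_t)\dd t$ have the same dissipative and hypoelliptic structure as \eqref{eq:systemSDE}, so the arguments of Lemma \ref{lem:density} and Lemma \ref{lem:ergodicity} apply and show that $\mu^0$ has a smooth density $\rho^0$ which is the unique probability-density solution of the stationary Fokker--Planck equation \eqref{eq:FPsystem_homogenized}; uniqueness also makes the limit independent of the extracting subsequence. I expect the main obstacle to be establishing the uniformity in $\epl$ of the ergodic rate in Lemma \ref{lem:ergodicity}, on which the interchange of limits hinges, together with the passage from bounded to polynomially growing test functions. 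As a self-contained alternative that exhibits the effective coefficients directly, one may instead pass to the limit in the weak form $\int\diffL^\epl\varphi\dd\mu^\epl = 0$ of \eqref{eq:FPsystem}, where $\diffL^\epl$ is the generator of \eqref{eq:systemSDE}, after testing against the corrected function $\varphi + \epl\Phi(x/\epl)\partial_x\varphi$ with $\Phi$ solving \eqref{eq:CellProblem}: the cell problem cancels the $\OO(1/\epl)$ contribution $\frac1\epl(\sigma\Phi'' - p'\Phi' - p')(x/\epl)\partial_x\varphi$, and two-scale averaging of the $\OO(1)$ coefficients against the fast-scale measure $\mu$ produces the homogenized drift $A = K\alpha$ and diffusion $\Sigma = K\sigma$, recovering \eqref{eq:FPsystem_homogenized} with the coefficients of \eqref{eq:K_HOM}.
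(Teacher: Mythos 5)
Your overall architecture (fixed initial point, continuous mapping on path space, then a triangle inequality interchanging $t\to\infty$ and $\epl\to0$) works only if the geometric ergodicity of Lemma \ref{lem:ergodicity} holds with constants uniform in $\epl$, and this is precisely where your argument has a genuine gap. You justify uniformity by Remark \ref{rem:regularity_diss}, but that remark shows the opposite of what you need: the dissipativity constant $b$ for $V^\epl$ can be kept $\epl$-independent only at the price of the additive constant inflating to $a + \norm{p'}_{L^\infty(\R)}^2/(2\gamma\epl^2)$, which blows up like $\epl^{-2}$. In the framework of \cite{MSH02} on which Lemma \ref{lem:ergodicity} rests, the convergence rate is extracted from the Lyapunov drift condition together with a minorization on a sublevel set whose radius scales like $\sqrt{a/b}\sim\epl^{-1}$; both the prefactor $C$ and, a priori, the rate $\lambda$ may therefore degenerate as $\epl\to0$, and nothing in the paper controls them uniformly. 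Uniform ergodicity of the couple is plausibly true (for the marginal $X^\epl$ one could invoke a Holley--Stroock-type bounded-perturbation argument, since $p$ is bounded; for the hypoelliptic pair $(X^\epl,Z^\epl)^\top$ one would need a hypocoercivity estimate with $\epl$-uniform constants), but it is a substantial piece of analysis that your proof assumes rather than supplies, and without it the first term in your triangle inequality cannot be made small uniformly in $\epl$, so the interchange of limits does not go through. You flag this yourself as ``the main obstacle,'' correctly — but it is not a technicality, it is the missing core of the argument.

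The paper sidesteps the issue entirely, and you could repair your proof the same way: start the multiscale system at stationarity, $(X_0^\epl,Z_0^\epl)^\top\sim\mu^\epl$, and the homogenized system \eqref{eq:systemSDEHom} at $\mu^0$. The path-space homogenization theorem (\cite[Theorem 18.1]{PaS08}, with the minor modification for stationary rather than matching initial data noted in the paper) gives $(X^\epl,Z^\epl)^\top\to(X,Z)^\top$ in law in $\mathcal C^0([0,T];\R^2)$, and since the time-$t$ marginal of a stationary process \emph{is} its invariant measure, projecting at any fixed time immediately yields $\mu^\epl\rightharpoonup\mu^0$ — no rates and no interchange of limits are needed. (Alternatively, your tightness observation plus this path-space convergence started from a subsequential limit point identifies every limit point as an invariant measure of \eqref{eq:systemSDEHom}, and uniqueness of the latter finishes the proof.) Your identification of $\mu^0$ via hypoellipticity and the stationary Fokker--Planck equation coincides with the paper's. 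As for your self-contained corrector alternative: the algebra is right (the cell problem \eqref{eq:CellProblem} cancels the $\OO(1/\epl)$ term, and the averages $\int_0^L(1+\Phi'(y))\,\mu(\d y)=\int_0^L(1+\Phi'(y))^2\,\mu(\d y)=K$ reproduce $A$ and $\Sigma$), but it hides the same difficulty in a different place: one must prove two-scale convergence of the \emph{joint} invariant measure, i.e., that the correlation factor $R^\epl(x,z)$ in \eqref{eq:densityDecomposition} does not itself oscillate at scale $\epl$ in $x$ — only the marginal $\phi^\epl$ is explicit, so this step too is asserted rather than proved.
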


\begin{proof} Let $(X, Z)^\top \defeq \left((X_t, Z_t)^\top, 0\leq t \leq T\right)$ be the solution of
	\begin{equation}
	\label{eq:systemSDEHom}
	\begin{aligned}
	\d X_t &= -A \cdot V'(X_t) \dd t + \sqrt{2\Sigma} \dd W_t, \\
	\d Z_t &= \frac{1}{\delta} \left ( X_t - Z_t \right ) \dd t,
	\end{aligned}
	\end{equation} 
	with $(X_0, Z_0)^\top \sim \mu^0$. The arguments of Section \ref{sec:ergodic} can be repeated to conclude that the invariant measure of $(X, Z)^\top$ admits a smooth density $\rho^0$ which satisfies \eqref{eq:FPsystem_homogenized}. Moreover, standard homogenization theory (see e.g. \cite[Chapter 3, Theorem 6.4]{BLP78} or \cite[Theorem 18.1]{PaS08}) guarantees that $(X^\epl,Z^\epl)^\top \to (X,Z)^\top$ for $\epl \to 0$ in law as random variables with values in $\mathcal C^0([0, T]; \R^2)$, provided that $(X_0^\epl, Z_0^\epl)^\top \sim \mu^\epl$. We remark that traditionally it is assumed that the initial conditions satisfy $(X_0^\epl, Z_0^\epl)^\top = (X_0, Z_0)^\top$ for the homogenization result to hold, but notice that the proof of e.g. \cite[Theorem 18.1]{PaS08} can be shown to hold with a minor modification in case both the multiscale and the homogenized processes are at stationarity. Denoting $E = C^0([0,T], \R^2)$, this means that the measure induced by $(X^\epl, Z^\epl)^\top$ on $(E, \mathcal B(E))$ converges weakly to the measure induced by $(X, Z)^\top$ on the same measurable space (see e.g. \cite[Definition 3.24]{PaS08}). Hence, the measure $\mu^\epl$ converges weakly to $\mu^0$ for $\epl \to 0$.
\end{proof}

\begin{example}\label{ex:OrnUhl} A closed form solution of \eqref{eq:FPsystem_homogenized} can be obtained in a simple case. Let the dimension of the parameter $N=1$ and let $V(x) = x^2/2$. Then, the analytical solution is given by
	\begin{equation}
	\rho^0(x,z) = \frac{1}{C_{\rho^0}} \exp\left(-\frac{A}{\Sigma} \frac{x^2}{2} - \frac{1}{\delta \Sigma} \frac{(x - (1+A \delta)z)^2}{2}\right),
	\end{equation}
	where
	\begin{equation}
	C_{\rho^0} = \int_{\R} \int_{\R} \exp\left(-\frac{A}{\Sigma} \frac{x^2}{2} - \frac{1}{\delta \Sigma} \frac{(x - (1+A \delta)z)^2}{2}\right) \dd x \dd z = \frac{2\pi\Sigma\sqrt \delta}{(1+A\delta)\sqrt A}.
	\end{equation}
	This is the density of a multivariate normal distribution $\mathcal N(0, \Gamma)$, where the covariance matrix is given by
	\begin{equation}
	\Gamma = \frac{\Sigma}{A (1 + A\delta)} \begin{pmatrix} 1+A\delta & 1 \\ 1 & 1 \end{pmatrix}.
	\end{equation}
	Let us remark that this distribution can be obtained from direct computations involving Gaussian processes. In particular, we have that $X$ is in this case an Ornstein--Uhlenbeck process and it is therefore known that $X \sim \mathcal{GP}(m_t, \mathcal C(t, s))$, where at stationarity $m_t = 0$ and
	\begin{equation}
	\mathcal C(t, s) = \frac{\Sigma}{A} e^{-A|t-s|}.
	\end{equation}
	The basic properties of Gaussian processes imply that $Z$ is a Gaussian process, and that the couple $(X, Z)^\top$ is a Gaussian process, too, whose mean and covariance are computable explicitly.
\end{example}

We now present an analogous result to Lemma \ref{lem:FPMarginal} for the limit distribution.

\begin{corollary}\label{lem:FPMarginal_Hom} Let $\rho^0$ be the solution of \eqref{eq:FPsystem_homogenized} and let us write 
	\begin{equation}
		\rho^0(x, z) = \phi^0(x)\psi^0(z)R^0(x,z),
	\end{equation}
	where $\phi^0$ and $\psi^0$ are the marginal densities, i.e., 
	\begin{equation}
		\phi^0(x) = \int_{\R} \rho^0(x,z) \dd z, \quad \psi^0(z) = \int_{\R} \rho^0(x,z) \dd x.
	\end{equation}
	Then, if $A$ and $\Sigma$ are the coefficients of the homogenized equation \eqref{eq:SDE_HOM}, it holds
	\begin{equation} \label{eq:phi0}
		\phi^0(x) = \frac{1}{C_{\phi^0}} \exp\left(- \frac1{\Sigma} A\cdot V(x)\right), \quad \text{where } \quad C_{\phi^0} = \int_{\R} \exp\left(- \frac1{\Sigma} A\cdot V(x) \right) \dd x.
	\end{equation}
	Moreover, it holds
	\begin{equation}
		\Sigma \delta \int_{\R} \int_{\R} V'(z) \phi^0(x) \psi^0(z) \partial_x R^0(x,z) \dd x \dd z = \E^{\rho^0}[(X - Z)^2 V''(Z)].
	\end{equation}
\end{corollary}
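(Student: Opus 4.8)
The plan is to repeat, essentially verbatim, the argument already used to prove \cref{lem:FPMarginal}, which is exactly why the statement is phrased as a corollary. The key observation is that the stationary Fokker--Planck equation \eqref{eq:FPsystem_homogenized} has the same structure as \eqref{eq:FPsystem}: the two are identified under the substitutions $\sigma \mapsto \Sigma$ and $\alpha\cdot V'(x) + \frac1\epl p'(x/\epl) \mapsto A\cdot V'(x)$ in the $x$-drift, while the $z$-block $\frac1\delta \partial_z((z-x)\rho)$ is left untouched. Every step below is the homogenized-regime analogue of the corresponding step for $\rho^\epl$.

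For the first claim I would integrate \eqref{eq:FPsystem_homogenized} over $z \in \R$. The confining nature of the dynamics (through \cref{as:regularity}\ref{as:regularity_diss}) guarantees that $\rho^0$ and its first derivatives decay as $z \to \pm\infty$, so the $z$-divergence term integrates to zero and the remaining two terms collapse to the one-dimensional stationary equation $\Sigma\, \partial^2_{xx}\phi^0(x) + \partial_x\big(A\cdot V'(x)\,\phi^0(x)\big) = 0$ for the $x$-marginal. Integrating once in $x$ and discarding the (vanishing) probability-flux constant yields the Gibbs form \eqref{eq:phi0} for $\phi^0$.

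For the magic equality I would first exploit the decomposition $\rho^0 = \phi^0\psi^0 R^0$ together with the explicit $\phi^0$ just obtained: since $\partial_x \log\phi^0(x) = -\frac1\Sigma A\cdot V'(x)$ and $\psi^0$ does not depend on $x$, one has $\phi^0(x)\psi^0(z)\,\partial_x R^0(x,z) = \partial_x \rho^0(x,z) + \frac1\Sigma (A\cdot V'(x))\rho^0(x,z)$. Substituting this into the left-hand side and using $\int_\R \partial_x \rho^0 \dd x = 0$ kills the first resulting term and reduces the claim to the purely probabilistic identity
\begin{equation}
	\delta\, \E^{\rho^0}[(A\cdot V'(X)) V'(Z)] = \E^{\rho^0}[(X - Z)^2 V''(Z)].
\end{equation}
To establish this I would invoke stationarity of $(X, Z)^\top$ from \eqref{eq:systemSDEHom} in the form $\E^{\rho^0}[\mathcal L^0 g] = 0$ for the generator $\mathcal L^0 = \Sigma \partial^2_{xx} - A\cdot V'(x)\,\partial_x + \frac1\delta (x-z)\partial_z$, applied componentwise to the test function $g(x,z) = (x-z)V'(z)$. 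A short computation gives $\mathcal L^0 g = -(A\cdot V'(x))V'(z) + \frac1\delta (x-z)^2 V''(z) - \frac1\delta (x-z)V'(z)$; upon taking expectations the last term drops out because $\E^{\rho^0}[(X-Z)V'(Z)] = 0$, an auxiliary fact that is itself $\E^{\rho^0}[\mathcal L^0 V(Z)] = 0$, i.e. the choice $f(x,z) = V(z)$ already used in \cref{rem:hXZ}. Rearranging the surviving terms produces the displayed identity, hence the equality.

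The only genuinely delicate point, and the one I would treat with care, is the justification of the integrations by parts and the finiteness of every expectation: I must check that all boundary fluxes at $\pm\infty$ vanish and that the integrands are integrable against $\rho^0$. This follows from the dissipativity condition \cref{as:regularity}\ref{as:regularity_diss}, which forces $\rho^0$ to have rapidly (Gaussian-type) decaying tails, so that the polynomially growing factors coming from $V'$ and $V''$ (controlled by \cref{as:regularity}\ref{as:regularity_Lip}) are dominated and all moments are finite; in the quadratic case this is even explicit, since $\rho^0$ is then Gaussian by \cref{ex:OrnUhl}.
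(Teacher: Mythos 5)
Your proposal is correct and follows essentially the same route as the paper: the paper's proof is exactly your opening observation, namely that \eqref{eq:FPsystem_homogenized} is \eqref{eq:FPsystem} with $p \equiv 0$ and $(\alpha,\sigma)$ replaced by $(A,\Sigma)$, so the conclusion is a direct specialization of \cref{lem:FPMarginal}. Your detailed execution merely re-runs that lemma's proof in generator language — your choice $g(x,z)=(x-z)V'(z)$ together with the auxiliary identity $\E^{\rho^0}[(X-Z)V'(Z)]=0$ is precisely the paper's single test function $f(x,z)=(x-z)V'(z)+V(z)$ split into two pieces, and the stationarity relation $\E^{\rho^0}[\mathcal L^0 g]=0$ is the weak form of the Fokker--Planck equation used there.
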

\begin{proof} The proof is directly obtained from Lemma \ref{lem:FPMarginal} setting $p(y)=0$ and replacing $\alpha, \sigma$ by $A, \Sigma$ respectively. 
\end{proof}

Let us introduce a notation which will be used throughout the rest of the paper. We denote
\begin{equation}\label{eq:DefCalMTilde}
	\widetilde{\mathcal M}_\epl \defeq \E^{\rho^\epl}[V'(Z^\epl)\otimes V'(X^\epl)], \quad \widetilde{\mathcal M}_0 \defeq \E^{\rho^0}[V'(Z)\otimes V'(X)],
\end{equation}
i.e., $\widetilde{\mathcal M}_\epl$ is obtained in the limit for $T \to \infty$ applying the ergodic theorem elementwise to the matrix $\widetilde M$, and $\widetilde{\mathcal M}_0$ is the limit for $\epl \to 0$ of the matrix $\widetilde{\mathcal M}_\epl$ due to Lemma \ref{lem:convMeasure}. For completeness, we introduce here the symmetric matrices $\mathcal M_\epl$ and $\mathcal M_0$ which are defined as
\begin{equation}\label{eq:DefCalM}
	\mathcal M_\epl \defeq \E^{\rho^\epl}[V'(X^\epl)\otimes V'(X^\epl)], \quad \mathcal M_0 \defeq \E^{\rho^0}[V'(X)\otimes V'(X)],
\end{equation}
and which will be employed in the following. We can now introduce the main result, namely the convergence of the estimator based on filtered data of the drift coefficient of the homogenized equation.

\begin{theorem}\label{thm:mainTheorem} Let the assumptions of Lemma \ref{lem:ergodicity} and Lemma \ref{lem:convMeasure} hold, and let $\widehat A_k(X^\epl, T)$ be defined in \eqref{eq:AHatMixed} with $\delta$ independent of $\epl$. If $\widetilde M$ is invertible, then
	\begin{equation}
	\lim_{\epl \to 0} \lim_{T \to \infty} \widehat A_k(X^\epl,T) = A, \quad \text{a.s.},
	\end{equation}
	where $A$ is the drift coefficient of the homogenized equation \eqref{eq:SDE_HOM}.
\end{theorem}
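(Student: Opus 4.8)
The plan is to take the double limit in two stages, first $T\to\infty$ (using ergodicity) and then $\epl\to 0$ (using weak convergence of the invariant measure), and to show that the limiting estimator reduces to $-\mathcal M_0^{-1}$ applied to a vector that equals $-\mathcal M_0 A$, so that the drift of the homogenized equation is recovered exactly. The key algebraic identity to exploit is the fact that, in $\widetilde h$, we have kept the \emph{original} differential $\d X^\epl_t$; substituting the SDE \eqref{eq:SDE_HOM} for the homogenized process after the $\epl\to 0$ limit will produce a term matching $\widetilde{\mathcal M}_0 A$ plus a stochastic-integral remainder that vanishes.

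\textbf{Step 1 (limit $T\to\infty$).} Applying the ergodic theorem of Lemma \ref{lem:ergodicity} elementwise, I would first establish that $\lim_{T\to\infty}\widetilde M = \widetilde{\mathcal M}_\epl$ almost surely, using the definition \eqref{eq:DefCalMTilde}; the integrand $V'(Z^\epl_t)\otimes V'(X^\epl_t)$ has the polynomial growth required by Lemma \ref{lem:ergodicity} because $V'$ is polynomially bounded (Assumption \ref{as:regularity}\ref{as:regularity_Lip}). For $\widetilde h$ I would write $\d X^\epl_t$ using \eqref{eq:SDE_MS}, splitting $\widetilde h$ into a bounded-variation part
\begin{equation}
	\frac1T\int_0^T V'(Z^\epl_t)\left(-\alpha\cdot V'(X^\epl_t) - \tfrac1\epl p'\!\left(\tfrac{X^\epl_t}\epl\right)\right)\dd t
\end{equation}
and a martingale part $\frac{\sqrt{2\sigma}}T\int_0^T V'(Z^\epl_t)\dd W_t$. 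The martingale part vanishes almost surely as $T\to\infty$ by the strong law for martingales (its quadratic variation grows at most linearly in $T$, by the ergodic control of $\E^{\rho^\epl}[V'(Z^\epl)^2]$), while the bounded-variation part converges by ergodicity to its stationary average.

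\textbf{Step 2 (limit $\epl\to 0$).} Here I would invoke Lemma \ref{lem:convMeasure}: since $\widetilde{\mathcal M}_\epl\to\widetilde{\mathcal M}_0$ and the analogous stationary average for $\widetilde h$ converges to its homogenized counterpart. The delicate point is the troublesome fast-scale term $\frac1\epl p'(X^\epl/\epl)$, which does \emph{not} converge naively; this is precisely where the structure of the problem enters, and I expect it to be \textbf{the main obstacle}. The clean way through is to avoid handling this term directly and instead pass to the homogenized limit at the level of the whole estimator: by Lemma \ref{lem:convMeasure} the stationary expectation of $\widetilde h$ converges to $\E^{\rho^0}[V'(Z)\cdot(-A\cdot V'(X))]$ after substituting the homogenized dynamics \eqref{eq:SDE_HOM} (the Itô term again averaging to zero). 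This yields $\lim_{\epl\to0}\lim_{T\to\infty}\widetilde h = -\widetilde{\mathcal M}_0\,A$.

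\textbf{Step 3 (conclusion).} Combining Steps 1 and 2, and using continuity of matrix inversion together with the assumed invertibility of $\widetilde M$ (and the lower bound on $\lambda_{\min}$ from Assumption \ref{as:regularity}\ref{as:regularity_SPD} to control the limit), I would obtain
\begin{equation}
	\lim_{\epl\to0}\lim_{T\to\infty}\widehat A_k(X^\epl,T) = -\widetilde{\mathcal M}_0^{-1}\left(-\widetilde{\mathcal M}_0\,A\right) = A,
\end{equation}
which is the claim. The essential subtlety that makes this work — and the reason $\widetilde h$ must retain $\d X^\epl_t$ rather than $\d Z^\epl_t$, as flagged in Remark \ref{rem:hXZ} — is that the fast-scale drift contribution to $\widetilde h$, which would otherwise reproduce the spurious coefficient $\alpha$ of Theorem \ref{thm:Bias}, is suppressed because the filtered process $Z^\epl$ no longer tracks the $O(1/\epl)$ oscillations; the correlation structure encoded by $R^\epl$ and the identity \eqref{eq:MagicEquality} of Lemma \ref{lem:FPMarginal} is what guarantees the cancellation of the fast-scale bias in the limit.
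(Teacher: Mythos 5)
Your Steps 1 and 3 match the paper, and your final formula $\lim_{\epl\to0}\lim_{T\to\infty}\widetilde h = -\widetilde{\mathcal M}_0 A$ is correct, but Step 2 contains a genuine gap: the claim that one can ``avoid handling the fast-scale term directly'' and simply pass $\widetilde h$ to its homogenized counterpart via Lemma \ref{lem:convMeasure} is unjustified, and the proposed mechanism fails. After the $T\to\infty$ limit, the problematic object is
\begin{equation}
	\E^{\rho^\epl}\left[\frac1\epl\, p'\!\left(\frac{X^\epl}{\epl}\right)V'(Z^\epl)\right],
\end{equation}
whose test function depends on $\epl$ and is of size $\OO(\epl^{-1})$. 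Weak convergence of $\mu^\epl$ to $\mu^0$ only transfers expectations of \emph{fixed} (continuous, suitably integrable) observables; it says nothing about this term, and substituting the homogenized dynamics \eqref{eq:SDE_HOM} into $\widetilde h$ is not licensed by convergence in law, since the functional $X\mapsto\int_0^T V'(Z_t)\dd X_t$ is not continuous on path space at the small scales where $X^\epl$ and $X$ remain mutually singular. Indeed, your Step 2 argument proves too much: applied verbatim to the unfiltered estimator it would give $h\to-\mathcal M_0 A$ and hence $\widehat A(X^\epl,T)\to A$, contradicting Theorem \ref{thm:Bias}. Relatedly, your closing intuition that the fast-scale drift contribution is ``suppressed'' by filtering is wrong: in the paper's proof this term is the entire engine of the result — integration by parts against the explicit marginal $\phi^\epl$ of \eqref{eq:marginalX} shows it equals $-\widetilde{\mathcal M}_\epl\alpha$ plus a correction, so it simultaneously cancels the spurious $\alpha$ and, in the limit, generates $\widetilde{\mathcal M}_0 A$ (if it were suppressed, $\widetilde h$ would vanish, as happens when $\d X^\epl_t$ is replaced by $\d Z^\epl_t$ in Remark \ref{rem:hXZ}).

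What is missing is precisely the computation you gesture at but never perform: (i) use the explicit density $\phi^\epl$, whose factor $e^{-p(x/\epl)/\sigma}$ allows the $\frac1\epl p'(x/\epl)$ to be absorbed by an integration by parts; (ii) invoke the identity \eqref{eq:MagicEquality} of Lemma \ref{lem:FPMarginal} to convert the residual $R^\epl$-correlation integral into $\frac1\delta\E^{\rho^\epl}[(X^\epl-Z^\epl)^2V''(Z^\epl)]$, a \emph{fixed} polynomially bounded observable — this is the only point where Lemma \ref{lem:convMeasure} legitimately applies, and it is also the only place the hypothesis that $\delta$ is independent of $\epl$ is used (your proposal never uses it, which is a warning sign, cf. Remark \ref{rem:deltaPower}); and (iii) apply the homogenized analogue (Corollary \ref{lem:FPMarginal_Hom}) and a final integration by parts against $\phi^0$ of \eqref{eq:phi0} to identify the limit with $\widetilde{\mathcal M}_0 A$. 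A minor additional slip: Assumption \ref{as:regularity}\ref{as:regularity_SPD} bounds $\lambda_{\min}$ of the symmetric matrix $M$, not of $\widetilde M$, whose invertibility is a separate standing assumption; your Step 1 and the martingale argument for $I_2^\epl$ are otherwise in line with the paper.
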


\begin{proof} Replacing the expression of $\d X^\epl_t$ into \eqref{eq:MandHTilde}, we get for $\widetilde h$
\begin{equation}\label{Ahat_decomposition}
\widetilde h = -\widetilde M \alpha - \frac{1}{T} \int_0^T \frac{1}{\epl} p' \left ( \frac{X^\epl_t}{\epl} \right ) V'(Z^\epl_t) \dd t + \frac{\sqrt{2\sigma}}{T} \int_0^T V'(Z^\epl_t) \dd W_t.
\end{equation}
Therefore, we have
\begin{equation}\label{eq:alphaDecomposition}
\begin{aligned}
	\widehat A_k(X^\epl, T) &= \alpha + \frac{1}{T} \widetilde M^{-1} \int_0^T \frac{1}{\epl} p' \left ( \frac{X^\epl_t}{\epl} \right ) V'(Z^\epl_t) \dd t - \frac{\sqrt{2\sigma}}{T}  \widetilde M^{-1} \int_0^T V'(Z^\epl_t) \dd W_t\\
	&\eqdef \alpha + I_1^\epl(T) - I_2^\epl(T).
\end{aligned}
\end{equation}
We study the terms $I_1^\epl(T)$ and $I_2^\epl(T)$ separately. First, the ergodic theorem applied to $I_1^\epl(T)$ yields
\begin{equation}\label{limit_estimator}
\lim_{T \to \infty} I_1^\epl(T) = \widetilde {\mathcal M}_\epl^{-1} \E^{\rho^\epl} \left [ \frac{1}{\epl} p' \left ( \frac{X^\epl}{\epl} \right ) V'(Z^\epl) \right ], \quad \text{a.s.}
\end{equation}
Replacing the decomposition \eqref{eq:densityDecomposition}, the expression \eqref{eq:marginalX} of $\phi^\epl$ and integrating by parts, we have
\begin{equation}
\begin{aligned}
	\E^{\rho^\epl} \left [ \frac{1}{\epl} p' \left ( \frac{X^\epl}{\epl} \right ) V'(Z^\epl) \right ] &= \int_{\R} \int_{\R} V'(z) \, \frac1\epl p' \left ( \frac{x}\epl \right ) \frac{1}{C_{\phi^\epl}} e^{- \frac{1}{\sigma} \alpha \cdot V(x)} e^{ - \frac{1}{\sigma} p \left ( \frac{x}\epl \right)} \psi^\epl(z) R^\epl(x,z) \dd x \dd z \\
	&= -\sigma \int_{\R} \int_{\R} \frac{\d}{\d x}\left( e^{ - \frac{1}{\sigma} p \left ( \frac{x}\epl \right)} \right) \frac{1}{C_{\phi^\epl}} e^{- \frac{1}{\sigma} \alpha \cdot V(x)} V'(z) \psi^\epl(z) R^\epl(x,z) \dd x \dd z \\
	&= \sigma \int_{\R} \int_{\R} \frac{1}{C_{\phi^\epl}} e^{ - \frac{1}{\sigma} p \left ( \frac{x}\epl \right)} \partial_x \left ( e^{- \frac{1}{\sigma} \alpha \cdot V(x)} R^\epl(x,z) \right ) V'(z) \psi^\epl(z) \dd x \dd z,
\end{aligned}
\end{equation}
which implies
\begin{align}
	\E^{\rho^\epl} \left [ \frac{1}{\epl} p' \left ( \frac{X^\epl}{\epl} \right ) V'(Z^\epl) \right ] &= 
	\begin{aligned}[t]
		&- \left(\int_{\R} \int_{\R} V'(z) \otimes V'(x) \rho^\epl(x, z) \dd x \dd z\right) \alpha \\
		&+\sigma \int_{\R} \int_{\R} V'(z) \phi^\epl(x) \psi^\epl(z) \partial_x R^\epl(x,z) \dd x \dd z 
	\end{aligned}
	\\
	&= - \widetilde {\mathcal M}_\epl\alpha + \sigma \int_{\R} \int_{\R} V'(z) \phi^\epl(x) \psi^\epl(z) \partial_x R^\epl(x,z) \dd x \dd z.
\end{align}
Replacing the equality above into \eqref{limit_estimator}, we obtain
\begin{equation}
\lim_{T \to \infty} I_1^\epl(T) = -\alpha + \widetilde {\mathcal M}_\epl^{-1} \sigma \int_{\R} \int_{\R} V'(z) \phi^\epl(x) \psi^\epl(z) \partial_x R^\epl(x,z) \dd x \dd z, \quad \text{a.s.}
\end{equation}
Due to Lemma \ref{lem:FPMarginal}, we therefore have
\begin{equation} \label{eq:for_zeta}
	\lim_{T \to \infty} I_1^\epl(T) = -\alpha + \frac1\delta \widetilde {\mathcal M}_\epl^{-1} \E^{\rho^\epl}[(X^\epl - Z^\epl)^2 V''(Z^\epl)], \quad \text{a.s.}	
\end{equation}
Since $\delta$ is independent of $\epl$, we can pass to the limit as $\epl$ goes to zero and Lemma \ref{lem:convMeasure} yields
\begin{equation} \label{eq:limit_estimator_2}
	\lim_{\epl \to 0} \lim_{T \to \infty} I_1^\epl(T) = -\alpha + \frac1\delta \widetilde {\mathcal M}_0^{-1} \E^{\rho^0}[(X - Z)^2 V''(Z)], \quad \text{a.s.}
\end{equation}
Due to Corollary \ref{lem:FPMarginal_Hom}, we have
\begin{equation}\label{eq:final_result}
	\frac{1}{\delta} \E^{\rho^0}[(X - Z)^2 V''(Z)] = \Sigma \int_{\R} \int_{\R} V'(z) \phi^0(x) \psi^0(z) \partial_x R^0(x,z) \dd x \dd z,
\end{equation}
and moreover, an integration by parts yields
\begin{equation}
\begin{aligned}
	\frac{1}{\delta} \E^{\rho^0}[(X - Z)^2 V''(Z)] &= -\Sigma \int_{\R} \int_{\R} V'(z) (\phi^0)'(x) \psi^0(z) R^0(x,z) \dd x \dd z\\
	&= -\Sigma \int_{\R} \int_{\R} V'(z) \frac{\d}{\d x} \left ( \frac{1}{C_{\phi^0}} e^{-\frac{1}{\Sigma} A\cdot V(x)} \right ) \psi^0(z) R^0(x,z) \dd x \dd z \\
	&= \left(\int_{\R} \int_{\R} V'(z) \otimes V'(x) \rho^0(x,z) \dd x \dd z\right)A\\
	&=  \widetilde {\mathcal M}_0 A.
\end{aligned}
\end{equation}
We can therefore conclude that
\begin{equation}\label{eq:Proof_I1}
\lim_{\epl \to 0} \lim_{T \to \infty} I_1^\epl(T) = -\alpha + A, \quad \text{a.s.}
\end{equation}
We now consider the second term $I_2^\epl(T)$, and rewrite it as
\begin{equation}
	I^\epl_2(T) = \sqrt{2\sigma} I_{2,1}^\epl(T)  I_{2,2}^\epl(T),
\end{equation}
where
\begin{equation}
\begin{aligned}
	I_{2,1}^\epl(T) &\defeq \left(\frac{1}{T} \int_0^T V'(Z^\epl_t) \otimes V'(X^\epl_t) \dd t\right)^{-1}\left(\frac{1}{T} \int_0^T V'(Z^\epl_t) \otimes V'(Z^\epl_t) \dd t\right),\\
	I_{2,2}^\epl(T) &\defeq \left(\frac{1}{T} \int_0^T V'(Z^\epl_t) \otimes V'(Z^\epl_t) \dd t\right)^{-1} \left(\frac1T \int_0^T V'(Z^\epl_t) \dd W_t\right).
\end{aligned}
\end{equation}
The ergodic theorem yields
\begin{equation}
	\lim_{T \to \infty} I_{2,1}^\epl(T) = \widetilde {\mathcal M}_\epl^{-1}\E^{\rho^\epl}\left[V'(Z^\epl) \otimes V'(Z^\epl)\right] \eqdef R^\epl,
\end{equation}
where $R^\epl$ is bounded uniformly in $\epl$ due to the theory of homogenization, Assumption \ref{as:regularity}\ref{as:regularity_Lip}-\ref{as:regularity_SPD} and Lemma \ref{lem:bounded_momentZ}. Moreover, always due to Lemma \ref{lem:bounded_momentZ} and Assumption \ref{as:regularity}\ref{as:regularity_Lip} we have that $V'(Z^\epl)$ is square integrable, and hence the strong law of large numbers for martingales implies
\begin{equation}
	\lim_{T \to \infty} I_{2,2}^\epl(T) = 0, \quad \text{a.s.},
\end{equation}
independently of $\epl$. Therefore
\begin{equation}
	\lim_{\epl \to 0}\lim_{T \to \infty} I_2^\epl(T) = 0, \quad \text{a.s.},
\end{equation}
which, together with \eqref{eq:Proof_I1} and \eqref{eq:alphaDecomposition}, proves the desired result.
\end{proof}

\begin{remark}\label{rem:deltaPower} Let us remark that the assumption that $\delta$ is independent of $\epl$ is necessary to pass from \eqref{eq:for_zeta} to \eqref{eq:limit_estimator_2} but is not needed before \eqref{eq:for_zeta}. Moreover, the term $I_2^\epl(t)$ in the proof vanishes a.s. independently of $\epl$. Therefore, in the analysis of the case $\delta = \mathcal O(\epl^\zeta)$ it will be sufficient for unbiasedness to show that
	\begin{equation}
		\lim_{\epl\to 0}\frac1\delta \widetilde {\mathcal M}_\epl^{-1} \E^{\rho^\epl}[(X^\epl - Z^\epl)^2 V''(Z^\epl)] = A,
	\end{equation}
	which is a non-trivial limit since $\delta \to 0$ for $\epl \to 0$.
\end{remark}

\subsection{Filtered Data in the Multiscale Regime}\label{sec:Fast}

We now consider the case of the filtering width $\delta = \mathcal O(\epl^\zeta)$, where $\zeta > 0$ will be specified in the following. In this case, the filtered process resembles more the original process $X^\epl$, as can be noted in Figure \ref{fig:filter}. Moreover, the techniques employed for proving Theorem \ref{thm:mainTheorem} can only be partly exploited, as highlighted by Remark \ref{rem:deltaPower}. In fact, in order to prove unbiasedness it is necessary to characterize precisely the difference between the processes $Z^\epl$ and $X^\epl$. A first characterization is given by the following Proposition, whose proof is found in Appendix \ref{ap:ProofDistance}.

\begin{proposition} \label{prop:zeta} Let Assumption \ref{as:regularity} hold and $\epl,\delta>0$ be sufficiently small. Then, it holds for every $t > 0$
	\begin{equation}
	X_t^\epl - Z_t^\epl = \delta B^\epl_t + R(\epl,\delta),
	\end{equation}
	where the stochastic process $B_t^\epl$ is defined as
	\begin{equation}\label{eq:defBeta}
	B_t^\epl \defeq \sqrt{2\sigma} \int_0^t k(t-s)(1 + \Phi'(Y_s^\epl)) \dd W_s,
	\end{equation}
	where $\Phi$ is the solution of the cell problem \eqref{eq:CellProblem}, $W_s$ is the Brownian motion appearing in \eqref{eq:SDE_MS} and $Y_t^\epl = X_t^\epl / \epl$. Moreover, $B_t^\epl$ and the remainder $R(\epl,\delta)$ satisfy for every $p \geq 1$ the estimates
	\begin{equation} \label{eq:estimate_beta}
	\left( \E^{\phi^\epl} \abs{B_t^\epl}^p \right)^{1/p} \leq C \delta^{-1/2},
	\end{equation}
	and
	\begin{equation} \label{eq:remainder_zeta}
	\left( \E^{\phi^\epl} \left| R(\epl,\delta) \right|^p \right)^{1/p} \le C \left( \delta + \epl + \max \{ 1,t \} e^{-t/\delta} \right),
	\end{equation}
	where $C$ is independent of $\epl$, $\delta$ and $t$, and $\phi^\epl$ is the density of the invariant measure of $X^\epl$.
\end{proposition}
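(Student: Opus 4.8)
The plan is to derive a pathwise representation of $X_t^\epl - Z_t^\epl$ in which the leading $\OO(\delta)$ contribution is the explicit stochastic integral $\delta B_t^\epl$, and to show that all remaining terms are genuinely lower order in $L^p(\phi^\epl)$. Since $\beta = 1$ the kernel satisfies $\int_0^t k(t-s)\dd s = 1 - e^{-t/\delta}$, so I would first record the elementary decomposition
\begin{equation}
X_t^\epl - Z_t^\epl = \int_0^t k(t-s)\left(X_t^\epl - X_s^\epl\right)\dd s + e^{-t/\delta} X_t^\epl,
\end{equation}
where the boundary term $e^{-t/\delta}X_t^\epl$ reflects that the kernel integrates to $1-e^{-t/\delta}$ rather than $1$ over $[0,t]$. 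The essential step is then to feed into this the homogenization representation of the increment $X_t^\epl - X_s^\epl$.

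To produce that representation I would apply Itô's formula to $\epl\Phi(Y_u^\epl)$, where $Y_u^\epl = X_u^\epl/\epl$, and use the cell problem \eqref{eq:CellProblem} to cancel the stiff $\OO(1/\epl)$ drift $\frac1\epl p'(Y_u^\epl)$ against the second-order term. Solving the resulting identity for $\frac1\epl p'(Y_u^\epl)\dd u$ and substituting into \eqref{eq:SDE_MS} gives the homogenized increment
\begin{equation}
X_t^\epl - X_s^\epl = -\int_s^t \alpha\cdot V'(X_u^\epl)(1+\Phi'(Y_u^\epl))\dd u - \epl\left(\Phi(Y_t^\epl)-\Phi(Y_s^\epl)\right) + \sqrt{2\sigma}\int_s^t (1+\Phi'(Y_u^\epl))\dd W_u.
\end{equation}

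Next I would insert this into the decomposition and exchange the order of integration. For the martingale part a stochastic Fubini argument, together with the explicit computation $\int_0^u k(t-s)\dd s = \delta\,k(t-u) - e^{-t/\delta}$, produces precisely $\delta B_t^\epl$ plus a term carrying the factor $e^{-t/\delta}$; the drift part is handled by the same kernel identity and inherits a factor $\delta$ (from $\delta k(t-u)$) or $e^{-t/\delta}$, while the $\Phi$-correction contributes an $\OO(\epl)$ term since $\Phi$ is bounded (in one dimension $1+\Phi'$ is explicit and bounded by smoothness and periodicity of $p$). Collecting all non-leading pieces into $R(\epl,\delta)$ then yields the claimed decomposition.

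The remaining work is the $L^p$ estimates, and this is where the care lies. For $B_t^\epl$ I would bound its quadratic variation deterministically, using boundedness of $1+\Phi'$ and $\int_0^t k(t-s)^2\dd s \le 1/(2\delta)$, and conclude via Burkholder--Davis--Gundy that $(\E^{\phi^\epl}\abs{B_t^\epl}^p)^{1/p}\le C\delta^{-1/2}$. For $R$, the $\OO(\delta)$ and $\OO(\epl)$ terms follow at once from the same bounds; the boundary and the $e^{-t/\delta}$-weighted drift term produce the $\max\{1,t\}e^{-t/\delta}$ factor, the linear-in-$t$ growth arising because these integrals lack the normalising kernel weight; and the $e^{-t/\delta}$-weighted martingale remainder is controlled by BDG through the bound $\sqrt t\,e^{-t/\delta}\le \max\{1,t\}e^{-t/\delta}$. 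The main obstacle I anticipate is making every estimate uniform in $\epl$: this reduces to uniform-in-$\epl$ moment bounds $\E^{\phi^\epl}\abs{V'(X_u^\epl)}^p\le C$ and $\E^{\phi^\epl}\abs{X_u^\epl}^p\le C$ at stationarity, which hold because $\phi^\epl\propto e^{-(\alpha\cdot V(x)+p(x/\epl))/\sigma}$ and $p$ is bounded, so the fast potential perturbs the confining measure only by a factor bounded above and below independently of $\epl$. Justifying the stochastic Fubini exchange rigorously is a secondary technical point, routine given square-integrability of the integrand.
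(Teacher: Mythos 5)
Your proposal is correct, and it follows the paper's overall architecture: the same splitting $X_t^\epl - Z_t^\epl = \int_0^t k(t-s)(X_t^\epl - X_s^\epl)\dd s + e^{-t/\delta}X_t^\epl$, the same cell-problem representation of the increments obtained from It\^o's formula applied to $\epl\Phi(Y^\epl)$, and the same identification of $\delta B_t^\epl$ inside the martingale part. The differences are in execution, and one is substantive. First, you extract $\delta B_t^\epl$ by stochastic Fubini together with the identity $\int_0^u k(t-s)\dd s = \delta k(t-u) - e^{-t/\delta}$, while the paper reaches the identical formula $I_2^\epl(t) = \delta B_t^\epl - \sqrt{2\sigma}\,e^{-t/\delta}Q_t^\epl$, with $Q_s^\epl = \int_0^s (1+\Phi'(Y_r^\epl))\dd W_r$, by applying It\^o's formula to $k(t-s)Q_s^\epl$; these are the same computation in two guises. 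Second, you bound $B_t^\epl$ directly by Burkholder--Davis--Gundy using the deterministic quadratic-variation bound $2\sigma \norm{1+\Phi'}_{L^\infty(\R)}^2 \int_0^t k(t-s)^2 \dd s \leq C\delta^{-1}$, whereas the paper argues indirectly, writing $\abs{B_t^\epl} \leq \delta^{-1}\left(\abs{I_2^\epl(t)} + \abs{R_2^\epl(t)}\right)$ and proving $\E\abs{I_2^\epl(t)}^p \leq C\delta^{p/2}$ via its kernel--Jensen lemma; your route is shorter and equally rigorous. Third --- the genuine divergence --- for the drift term the paper invokes the refined expansion of \cite[Proposition 5.8]{PaS07}, namely $\int_s^t (\alpha\cdot V'(X_r^\epl))(1+\Phi'(Y_r^\epl))\dd r = (t-s)(A\cdot V'(X_t^\epl)) + R_1^\epl(t-s)$ with $L^p$-error $C(\epl^2 + \epl(t-s)^{1/2} + (t-s)^{3/2})$, and then integrates against the kernel; you instead apply the same Fubini identity to the drift and bound the two resulting pieces by $\OO(\delta)$ and $\OO(t e^{-t/\delta})$ using only the kernel--Jensen inequality and uniform-in-$\epl$ stationary moments. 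Your shortcut is legitimate precisely because the entire drift contribution is absorbed into $R(\epl,\delta)$ anyway, so the explicit leading term $-\delta(A\cdot V'(X_t^\epl))$ that the paper's finer analysis isolates is never used in the statement; what the paper's route buys is that sharper structural information, while yours buys self-containedness, avoiding the import of a nontrivial external estimate. Your remaining points --- the almost sure $C\epl$ bound on the corrector term from boundedness of $\Phi$, the $Ce^{-t/\delta}$ bound on the boundary term, the $\sqrt{t}\,e^{-t/\delta}$ control of the discarded martingale piece, and the justification of uniform-in-$\epl$ moments through the explicit form of $\phi^\epl$ --- coincide with the paper's treatment.
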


It is clear from the Proposition above that understanding the properties of the process $B_t^\epl$ is key to understanding the behavior of the difference between $X^\epl$ and $Z^\epl$. In particular, we can write the dynamics of $B_t^\epl$ with an application of the Itô formula and due to the properties of the kernel $k(t)$ as
\begin{equation}
	\d B_t^\epl = - \frac1\delta B_t^\epl \dd t + \frac{\sqrt{2\sigma}}{\delta}(1+\Phi'(Y_t^\epl)) \dd W_t.
\end{equation}
This equation can be coupled with the dynamics of the processes $X_t^\epl$, $Y_t^\epl$ and $Z_t^\epl$, thus describing the evolution of the quadruple $(X^\epl, Y^\epl, Z^\epl, B^\epl)$ together. In particular, it is possible to show that the results of Section \ref{sec:ergodic} hold for the quadruple, and the properties of the invariant measure of the quadruple can be exploited to prove the unbiasedness of the estimator in the case $\delta = \mathcal O(\epl^\zeta)$ in the same way as in the case $\delta$ independent of $\epl$. In this context, a further assumption on the potential $V$ is necessary.
\begin{assumption}\label{as:regularityZeta} The derivatives $V''$ and $V'''$ of the potential $V \colon \R \to \R^N$ are component-wise polynomially bounded, and the second derivative is Lipschitz, i.e., there exists a constant $L > 0$ such that
	\begin{equation}
		\norm{V''(x)-V''(y)} \leq L \abs{x - y},
	\end{equation}
	for all $x, y \in \R$.
\end{assumption}

In light of Remark \ref{rem:deltaPower}, it is fundamental to understand the behavior of the quantity
\begin{equation}
	\frac1\delta (X_t^\epl - Z_t^\epl)^2 V''(Z_t^\epl),
\end{equation}
as well as its limit for $t\to \infty$ and for $\epl \to 0$. Let us remark that due to Proposition \ref{prop:zeta} we have
\begin{equation}
	\frac1\delta (X_t^\epl - Z_t^\epl)^2 V''(Z_t^\epl) \approx \delta (B_t^\epl)^2 V''(Z_t^\epl),
\end{equation}
and therefore studying the right hand side of the approximate equality above is the goal of the upcoming discussion. The following result, whose proof is in Appendix \ref{ap:ProofsDeltaZeta}, gives a first characterization.

\begin{lemma} \label{lem:quadruple} Under Assumptions \ref{as:regularity} and \ref{as:regularityZeta}, let $\eta^\epl$ be the invariant measure of the quadruple $(X^\epl, Y^\epl, Z^\epl, B^\epl)$. Then it holds
	\begin{equation}
	\delta \E^{\eta^\epl} \left[ (B^\epl)^2 V''(Z^\epl)  \right] = \sigma \E^{\eta^\epl} [(1 + \Phi'(Y^\epl))^2V''(Z^\epl) ] + \widetilde R(\epl,\delta),
	\end{equation}
	where the remainder $\widetilde R(\epl,\delta)$ satisfies
	\begin{equation} \label{eq:quadruple_remainder}
	\left| \widetilde R(\epl,\delta) \right| \le C \left( \delta^{1/2} + \epl \right).
	\end{equation}
\end{lemma}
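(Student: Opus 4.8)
The plan is to exploit stationarity: the expectation under the invariant measure $\eta^\epl$ of the generator $\diffL^\epl$ of the quadruple $(X^\epl,Y^\epl,Z^\epl,B^\epl)$ applied to any admissible test function vanishes, and I would choose that function so that the resulting identity produces exactly the two expectations in the statement. Recall that the four components evolve according to the system obtained by adjoining to \eqref{eq:systemSDE} the equations $\d Y_t^\epl = \epl^{-1}\d X_t^\epl$ and $\d B_t^\epl = -\delta^{-1}B_t^\epl\dd t + \delta^{-1}\sqrt{2\sigma}(1+\Phi'(Y_t^\epl))\dd W_t$, all driven by the common Brownian motion $W$, while $Z^\epl$ carries no noise. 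The natural choice is $g(x,y,z,b) = b^2V''(z)$ (read componentwise in the $\R^N$-valued $V''$): it depends only on $(z,b)$, so every derivative of $g$ in $x$ or $y$---and hence every mixed second-order term coupling $b$ to $x,y$---drops out.

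With this choice a direct computation, using that $Z^\epl$ has zero diffusion and that $\partial_{bb}g = 2V''(z)$ is multiplied by the $B$-diffusion $\delta^{-2}\sigma(1+\Phi'(y))^2$, gives
\begin{equation*}
\diffL^\epl g = \frac1\delta(x-z)\,b^2 V'''(z) - \frac2\delta b^2 V''(z) + \frac{2\sigma}{\delta^2}(1+\Phi'(y))^2 V''(z).
\end{equation*}
Taking $\E^{\eta^\epl}$, invoking $\E^{\eta^\epl}[\diffL^\epl g]=0$, and multiplying by $\delta^2/2$ yields precisely
\begin{equation*}
\delta\,\E^{\eta^\epl}[(B^\epl)^2 V''(Z^\epl)] = \sigma\,\E^{\eta^\epl}[(1+\Phi'(Y^\epl))^2 V''(Z^\epl)] + \widetilde R(\epl,\delta),
\end{equation*}
with the remainder identified explicitly as $\widetilde R(\epl,\delta) = \frac\delta2\E^{\eta^\epl}[(X^\epl-Z^\epl)(B^\epl)^2 V'''(Z^\epl)]$.

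It then remains to bound $\widetilde R$. Here I would insert the decomposition $X^\epl-Z^\epl = \delta B^\epl + R(\epl,\delta)$ of Proposition \ref{prop:zeta}, evaluated at stationarity, where the transient contribution $\max\{1,t\}e^{-t/\delta}$ to the bound \eqref{eq:remainder_zeta} disappears as $t\to\infty$ and leaves $(\E^{\eta^\epl}|R|^p)^{1/p}\le C(\delta+\epl)$. This splits $\widetilde R$ into $\frac{\delta^2}2\E^{\eta^\epl}[(B^\epl)^3V'''(Z^\epl)]$ and $\frac\delta2\E^{\eta^\epl}[R\,(B^\epl)^2V'''(Z^\epl)]$. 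For the first, Hölder's inequality with the moment bound \eqref{eq:estimate_beta}, i.e. $(\E^{\eta^\epl}|B^\epl|^p)^{1/p}\le C\delta^{-1/2}$, the polynomial growth of $V'''$ (Assumption \ref{as:regularityZeta}) and the uniform-in-$\epl$ moments of $Z^\epl$ (Lemma \ref{lem:bounded_momentZ}) yield a bound of order $\delta^2\cdot\delta^{-3/2}=\delta^{1/2}$; for the second, the same ingredients together with $(\E^{\eta^\epl}|R|^p)^{1/p}\le C(\delta+\epl)$ give $\delta\cdot(\delta+\epl)\cdot\delta^{-1}=\delta+\epl$. Since $\delta\le\delta^{1/2}$ for small $\delta$, summing gives $|\widetilde R(\epl,\delta)|\le C(\delta^{1/2}+\epl)$, as claimed.

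The algebra above is routine; I expect the genuine difficulty to lie in the two supporting facts. First, one must justify $\E^{\eta^\epl}[\diffL^\epl g]=0$ for the polynomially growing $g=b^2V''(z)$: I would obtain it by applying Itô's formula to $g(X_t^\epl,Y_t^\epl,Z_t^\epl,B_t^\epl)$ on $[0,t]$, checking via the moment bounds that the stochastic integral is a true martingale, taking expectations, and passing to the stationary limit in which $\frac{\d}{\d t}\E^{\eta^\epl}[g]=0$; the geometric ergodicity of the enlarged process (the analogue of Lemma \ref{lem:ergodicity} for the quadruple) and uniform moment control are what make this rigorous. Second, the estimates of Proposition \ref{prop:zeta} are stated against the $X$-marginal $\phi^\epl$ and at finite $t$, so one must transfer them to the stationary law $\eta^\epl$ of the full quadruple; this is legitimate because the $X$-marginal of $\eta^\epl$ is $\phi^\epl$ and the bounds \eqref{eq:estimate_beta}--\eqref{eq:remainder_zeta} are uniform in $t$, hence persist in the limit $t\to\infty$ defining $\eta^\epl$.
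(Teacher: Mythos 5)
Your proposal is correct and follows essentially the same route as the paper: the paper multiplies the stationary Fokker--Planck equation of the quadruple by the test function $f(z,b)=\tfrac12 b^2V''(z)$ and integrates by parts, which is precisely the weak form of your identity $\E^{\eta^\epl}[\diffL^\epl g]=0$, and it arrives at the identical remainder $\widetilde R(\epl,\delta) = \tfrac\delta2\,\E^{\eta^\epl}\left[(B^\epl)^2V'''(Z^\epl)(X^\epl-Z^\epl)\right]$. Your bound on $\widetilde R$ is also the same in substance: the paper applies H\"older with Corollary \ref{cor:distanceZandX}, whose own proof consists exactly of the insertion of the decomposition $X^\epl-Z^\epl=\delta B^\epl+R(\epl,\delta)$ from Proposition \ref{prop:zeta} that you carry out inline.
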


Let us remark that the quantity appearing above hints towards the theory of homogenization. In fact, we recall that the homogenization coefficient $K$ is given by
\begin{equation}
	K = \int_0^L \left(1 + \Phi'(y)\right)^2 \mu(\d y),
\end{equation}
where $\mu$ is the marginal measure of the process $Y^\epl$ when coupled with $X^\epl$. Therefore, the next step is the homogenization limit, i.e., the limit of vanishing $\epl$, which is considered in the following Lemma, and whose proof is given in Appendix \ref{ap:ProofsDeltaZeta}.

\begin{lemma} \label{lem:quadruple_convergence} Let the assumptions of Lemma \ref{lem:quadruple} hold, and let $\delta = \epl^\zeta$ with $\zeta > 0$. Then, it holds
	\begin{equation}
	\lim_{\epl \to 0} \sigma \E^{\eta^\epl} [(1 + \Phi'(Y^\epl))^2V''(Z^\epl) ] = \Sigma \E^{\phi^0} [V''(X)],
	\end{equation}
	where $\Sigma$ is the diffusion coefficient of the homogenized equation \eqref{eq:SDE_HOM}.
\end{lemma}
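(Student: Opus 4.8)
The plan is to decouple the fast-oscillating factor $(1+\Phi'(Y^\epl))^2$ from the slow factor $V''(Z^\epl)$ and then exploit the explicit form of the marginal density $\phi^\epl$ given in \eqref{eq:marginalX}. Note that \cref{lem:convMeasure} is unavailable here, since $\delta = \epl^\zeta \to 0$ and the filter degenerates; I therefore rely on \cref{prop:zeta} to compare $Z^\epl$ with $X^\epl$ directly. The strategy proceeds in two stages: first replace $V''(Z^\epl)$ by $V''(X^\epl)$ at the cost of a vanishing error, so that the integrand becomes a function of $X^\epl$ alone through $X^\epl$ and $Y^\epl = X^\epl/\epl$; then pass to the marginal $\phi^\epl$ and apply a two-scale averaging argument.

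First I would establish the replacement. Since $\Phi$ is the smooth $L$-periodic solution of the cell problem \eqref{eq:CellProblem}, its derivative $\Phi'$ is bounded, so $(1+\Phi'(Y^\epl))^2 \leq C$. Using the Lipschitz bound on $V''$ from \cref{as:regularityZeta}, I would estimate
\begin{equation}
\abs{\E^{\eta^\epl}\left[(1+\Phi'(Y^\epl))^2\left(V''(Z^\epl) - V''(X^\epl)\right)\right]} \leq CL\,\E^{\eta^\epl}\abs{X^\epl - Z^\epl}.
\end{equation}
Evaluating \cref{prop:zeta} at stationarity, where the transient term $\max\{1,t\}e^{-t/\delta}$ disappears in the limit $t \to \infty$, together with the moment bounds \eqref{eq:estimate_beta} and \eqref{eq:remainder_zeta}, gives $\E^{\eta^\epl}\abs{X^\epl - Z^\epl} \leq C(\delta^{1/2} + \epl) \to 0$. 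Hence it suffices to compute the limit of $\sigma\,\E^{\eta^\epl}[(1+\Phi'(Y^\epl))^2 V''(X^\epl)]$, and since this integrand depends only on $X^\epl$, the expectation reduces to $\sigma\,\E^{\phi^\epl}[(1+\Phi'(X^\epl/\epl))^2 V''(X^\epl)]$.

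For the averaging step I would insert the explicit density \eqref{eq:marginalX}, writing the expectation as the ratio
\begin{equation}
\frac{\int_\R (1+\Phi'(x/\epl))^2 V''(x)\, e^{-\alpha\cdot V(x)/\sigma}\, e^{-p(x/\epl)/\sigma}\dd x}{\int_\R e^{-\alpha\cdot V(x)/\sigma}\, e^{-p(x/\epl)/\sigma}\dd x}.
\end{equation}
Applying the weak convergence of periodic oscillations, namely $\int_\R F(x/\epl)G(x)\dd x \to \frac1L\left(\int_0^L F(y)\dd y\right)\int_\R G(x)\dd x$ for $L$-periodic bounded $F$ and $G \in L^1(\R)$, to numerator and denominator separately, the prefactors $1/L$ cancel and the fast averages combine into $\frac1Z\int_0^L (1+\Phi'(y))^2 e^{-p(y)/\sigma}\dd y = K$ by \eqref{eq:K_HOM}. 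The surviving slow ratio is $\int_\R V''(x)e^{-\alpha\cdot V(x)/\sigma}\dd x \big/ \int_\R e^{-\alpha\cdot V(x)/\sigma}\dd x$; since $A = K\alpha$ and $\Sigma = K\sigma$ give $A/\Sigma = \alpha/\sigma$, the homogenized density \eqref{eq:phi0} satisfies $\phi^0 \propto e^{-\alpha\cdot V/\sigma}$, so this ratio equals $\E^{\phi^0}[V''(X)]$. Multiplying by $\sigma$ and using $\sigma K = \Sigma$ yields the claim.

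The main obstacle is the rigorous justification of the two-scale averaging in the presence of the polynomially growing factor $V''$: the decay of the Gibbs weight $e^{-\alpha\cdot V/\sigma}$, guaranteed by the dissipativity in \cref{as:regularity}\ref{as:regularity_diss}, must be shown to dominate the polynomial growth of $V''$ so that $G(x) = V''(x)e^{-\alpha\cdot V(x)/\sigma} \in L^1(\R)$ and the weak-$\ast$ convergence applies directly; uniform control of the tails, available through \cref{lem:bounded_momentZ}, would supply the required integrability. A secondary technical point is ensuring that the bounds of \cref{prop:zeta}, stated along trajectories with respect to $\phi^\epl$, transfer to the stationary joint measure $\eta^\epl$; this follows by taking $t\to\infty$ so that the exponentially decaying transient vanishes and the marginals stabilize.
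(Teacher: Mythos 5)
Your proof is correct, and it reaches the conclusion by a genuinely different route in the key step. The first stage, replacing $V''(Z^\epl)$ by $V''(X^\epl)$, coincides with the paper's treatment of its term $\Delta_1(\epl)$: boundedness of $\Phi'$, the Lipschitz property of $V''$ from Assumption \ref{as:regularityZeta}, and the stationary bound $\E^{\eta^\epl}\abs{X^\epl - Z^\epl} \leq C(\epl^{\zeta/2} + \epl)$, which the paper has already packaged (via Proposition \ref{prop:zeta} and geometric ergodicity) as Corollary \ref{cor:distanceZandX}. The second stage is where you diverge: the paper never invokes periodic averaging. Instead, it integrates the stationary Fokker--Planck equation \eqref{eq:FPSDEquadruple} of the quadruple over $z$ and $b$, solves the resulting equation for the marginal of $(X^\epl, Y^\epl)$ on $\R \times [0,L]$ explicitly as the product $\lambda(x,y) \propto \exp\left(-\alpha\cdot V(x)/\sigma - p(y)/\sigma\right)$, and concludes that the expectation factorizes exactly, $\sigma \E^{\eta^\epl}[V''(X^\epl)(1+\Phi'(Y^\epl))^2] = \sigma K \E^{\phi^0}[V''(X)] = \Sigma\E^{\phi^0}[V''(X)]$, for every fixed $\epl$; in other words its $\Delta_2(\epl)$ vanishes identically, with no limiting argument. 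Your route instead takes $Y^\epl = X^\epl/\epl$ literally, collapses the expectation onto the one-dimensional marginal \eqref{eq:marginalX}, and extracts the factor $K$ of \eqref{eq:K_HOM} through weak-$\ast$ convergence of periodic oscillations; this is more elementary (nothing beyond Lemma \ref{lem:FPMarginal} and a classical averaging lemma is needed), but it is only asymptotic and requires the integrability check you flag. That check does go through: Assumption \ref{as:regularity}\ref{as:regularity_diss} forces $\alpha \cdot V$ to grow quadratically, so the Gibbs factor $e^{-\alpha\cdot V/\sigma}$ dominates the polynomial growth of $V''$; note that this is supplied by dissipativity (or by the uniform-in-$\epl$ moment bounds on $X^\epl$ from \cite{PaS07}), not by Lemma \ref{lem:bounded_momentZ}, which controls moments of $Z^\epl$ and is not the relevant tool here. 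The remaining trade-off is interpretive: the paper identifies $\eta^\epl$ with the smooth solution of the extended-space Fokker--Planck equation, under which the slow and fast variables are exactly independent at stationarity, whereas your reading keeps the constraint $Y^\epl = X^\epl/\epl$ and recovers the product structure only in the limit $\epl \to 0$; both interpretations yield the lemma.
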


Provided with the results presented above, we can prove the following Theorem, stating that the estimator $\widehat A_k(X^\epl, T)$ is asymptotically unbiased even in the case of the filtering width $\delta$ vanishing with respect to the multiscale parameter $\epl$.

\begin{theorem}\label{thm:mainTheorem_zeta} Let the assumptions of Lemma \ref{lem:ergodicity} and Lemma \ref{lem:quadruple_convergence} hold. Let $\widehat A_k(X^\epl, T)$ be defined in \eqref{eq:AHatMixed} and $\delta = \epl^\zeta$ with $\zeta \in (0,2)$. If $\widetilde M$ is invertible, then
		\begin{equation}
		\lim_{\epl \to 0} \lim_{T \to \infty} \widehat A_k(X^\epl, T) = A, \quad \text{in probability},
		\end{equation}
		where $A$ is the drift coefficient of the homogenized equation \eqref{eq:SDE_HOM}.
\end{theorem}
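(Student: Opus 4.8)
The plan is to reduce the statement to the single scalar limit isolated in \cref{rem:deltaPower} and then evaluate it using the fine description of $X^\epl - Z^\epl$ provided by \cref{prop:zeta,lem:quadruple,lem:quadruple_convergence}. The decomposition \eqref{eq:alphaDecomposition} and the identity \eqref{eq:for_zeta} were derived in the proof of \cref{thm:mainTheorem} without ever using that $\delta$ is independent of $\epl$, and the term $I_2^\epl(T)$ vanishes almost surely as $T \to \infty$ independently of $\epl$. Hence it suffices to prove
\begin{equation}
\lim_{\epl \to 0} \frac1\delta \widetilde{\mathcal M}_\epl^{-1} \E^{\rho^\epl}\left[(X^\epl - Z^\epl)^2 V''(Z^\epl)\right] = A,
\end{equation}
which I would split into the convergence of the scalar factor $\frac1\delta\E^{\rho^\epl}[(X^\epl-Z^\epl)^2V''(Z^\epl)]$ and of the matrix factor $\widetilde{\mathcal M}_\epl^{-1}$.

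For the scalar factor I would lift the expectation to the invariant measure $\eta^\epl$ of the quadruple $(X^\epl,Y^\epl,Z^\epl,B^\epl)$, whose $(X^\epl,Z^\epl)$-marginal is $\rho^\epl$, and substitute $X^\epl - Z^\epl = \delta B^\epl + R(\epl,\delta)$ from \cref{prop:zeta}. Expanding the square gives
\begin{equation}
\frac1\delta \E^{\eta^\epl}\left[(X^\epl - Z^\epl)^2 V''(Z^\epl)\right] = \delta \E^{\eta^\epl}\left[(B^\epl)^2 V''(Z^\epl)\right] + 2\E^{\eta^\epl}\left[B^\epl R(\epl,\delta) V''(Z^\epl)\right] + \frac1\delta \E^{\eta^\epl}\left[R(\epl,\delta)^2 V''(Z^\epl)\right].
\end{equation}
The first term is handled by \cref{lem:quadruple} followed by \cref{lem:quadruple_convergence}, yielding $\Sigma\,\E^{\phi^0}[V''(X)]$ in the limit $\epl \to 0$. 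The two remaining terms I would control by Hölder's inequality, using the moment bounds \eqref{eq:estimate_beta} and \eqref{eq:remainder_zeta} (whose transient contribution $\max\{1,t\}e^{-t/\delta}$ disappears at stationarity) together with the polynomial bound on $V''$ from \cref{as:regularityZeta} and the uniform moment bounds on $Z^\epl$ (\cref{lem:bounded_momentZ}). This produces bounds of order $\delta^{1/2} + \epl\delta^{-1/2}$ for the cross term and $\delta + \epl^2/\delta$ for the quadratic remainder; with $\delta = \epl^\zeta$ both vanish as $\epl \to 0$ precisely when $\zeta < 2$, which is exactly where the hypothesis $\zeta \in (0,2)$ is used.

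For the matrix factor, since $\delta = \epl^\zeta \to 0$ the bounds of \cref{prop:zeta} give $\norm{X^\epl - Z^\epl}_{L^p} \leq C(\delta^{1/2}+\epl) \to 0$, so the Lipschitz continuity of $V'$ (Assumption~\ref{as:regularity}\ref{as:regularity_Lip}) forces $V'(Z^\epl) \to V'(X^\epl)$ in $L^p$; combined with the homogenization limit $\rho^\epl \to \rho^0$ this gives $\widetilde{\mathcal M}_\epl \to \mathcal M_0 = \E^{\phi^0}[V'(X)\otimes V'(X)]$, which is invertible by Assumption~\ref{as:regularity}\ref{as:regularity_SPD}. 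Finally, integrating by parts against the explicit marginal $\phi^0 \propto e^{-A\cdot V/\Sigma}$ of \cref{lem:FPMarginal_Hom} yields the algebraic identity $\Sigma\,\E^{\phi^0}[V''(X)] = \mathcal M_0 A$, so that $\mathcal M_0^{-1}\Sigma\,\E^{\phi^0}[V''(X)] = A$. Together with $I_2^\epl \to 0$ and \eqref{eq:alphaDecomposition} this closes the argument, the mode of convergence being in probability because the control of the remainders passes through $L^p$ estimates rather than pathwise bounds.

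The main obstacle I anticipate is the bookkeeping of the competing scales in the scalar factor: the process $B^\epl$ is of size $\delta^{-1/2}$ in every $L^p$ norm, so only after multiplication by $\delta$ does $\delta(B^\epl)^2$ reach the order-one scale that survives homogenization, and the cross and quadratic remainders sit just at the boundary of integrability, degrading like $\epl\delta^{-1/2}$ and $\epl^2/\delta$. Showing that these are genuinely negligible, and that the sharp threshold is $\zeta = 2$ rather than something more restrictive, requires the full strength of the stationary moment estimates and of the identity in \cref{lem:quadruple} that converts $\delta(B^\epl)^2$ into $\sigma(1+\Phi'(Y^\epl))^2$, thereby bringing in the cell-problem solution $\Phi$ and hence the homogenization coefficient.
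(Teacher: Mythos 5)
Your proposal is correct and follows essentially the same route as the paper's proof: the same reduction to the scalar limit via \cref{rem:deltaPower}, the same substitution $X^\epl - Z^\epl = \delta B^\epl + R(\epl,\delta)$ from \cref{prop:zeta} with the square expanded into three terms handled by \cref{lem:quadruple,lem:quadruple_convergence} and H\"older bounds of orders $\delta^{1/2}+\epl\delta^{-1/2}$ and $\delta+\epl^2\delta^{-1}$, and the same concluding integration by parts yielding $\Sigma\,\E^{\phi^0}[V''(X)] = \mathcal M_0 A$. The only cosmetic difference is that you re-derive the convergence $\widetilde{\mathcal M}_\epl \to \mathcal M_0$ from the Lipschitz property of $V'$ and the bound on $X^\epl - Z^\epl$, which is exactly the content of the paper's \cref{lem:distanceMandTildeM}, cited there directly.
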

\begin{proof}
	Let us introduce the notation
	\begin{equation}
	\mathcal A^\epl(\delta) \defeq \frac1\delta \widetilde{\mathcal M}_\epl^{-1} \E^{\rho^\epl}[(X^\epl - Z^\epl)^2 V''(Z^\epl)],
	\end{equation}
	where $\widetilde{\mathcal M}_\epl$ is defined in \eqref{eq:DefCalMTilde}.	Then following the proof of Theorem \ref{thm:mainTheorem} and in light of Remark \ref{rem:deltaPower}, we only need to show that if $\delta = \epl^\zeta$ with $\zeta \in (0,2)$ we have
	\begin{equation}
	\lim_{\epl \to 0} \mathcal A^\epl(\delta) = A, \quad \text{in probability}.
	\end{equation}
	Using Proposition \ref{prop:zeta} and geometric ergodicity for taking the limit for $t \to \infty$ (Lemma \ref{lem:ergodicity}), we have the following equality
	\begin{equation}
	\begin{aligned}
	\mathcal A^\epl(\delta) &= \widetilde{\mathcal M}_\epl^{-1} \frac1\delta \lim_{t \to \infty} \E [(X_t^\epl - Z_t^\epl)^2 V''(Z_t^\epl)] \\
	&= \widetilde{\mathcal M}_\epl^{-1} \frac1\delta \lim_{t \to \infty} \E \left[ \left( \delta B_t^\epl + R(\epl,\delta) \right)^2 V''(Z_t^\epl) \right] \\
	&\eqdef \widetilde{\mathcal M}_\epl^{-1} \lim_{t \to \infty} \left( J_1^\epl(t) + J_2^\epl(t) + J_3^\epl(t) \right),
	\end{aligned}
	\end{equation}
	where $R(\epl, \delta)$ is given in Proposition \ref{prop:zeta}, $\E$ denotes the expectation with respect to the Wiener measure and
	\begin{equation}
	\begin{aligned}
	J_1^\epl(t) &= \delta \E \left[ (B_t^\epl)^2 V''(Z_t^\epl) \right], \\
	J_2^\epl(t) &= 2 \E \left[ B_t^\epl R(\epl,\delta) V''(Z_t^\epl) \right], \\
	J_3^\epl(t) &= \frac1\delta \E\left[ R(\epl,\delta)^2 V''(Z_t^\epl) \right].
	\end{aligned}
	\end{equation}
	Let us consider the three terms separately. First, by geometric ergodicity and applying Lemma \ref{lem:quadruple} and Lemma \ref{lem:quadruple_convergence} we get
	\begin{equation}
	\begin{aligned}
	\lim_{\epl \to 0} \lim_{t \to \infty} J_1^\epl(t) &= \lim_{\epl \to 0}\delta \E^{\eta^\epl} \left[ (B^\epl)^2 V''(Z^\epl) \right] \\
	&= \lim_{\epl \to 0} \left(\sigma \E^{\eta^\epl} [V''(Z^\epl) (1 + \Phi'(Y^\epl))^2] + \widetilde R(\epl,\delta)\right)\\
	&= \Sigma \E^{\phi^0} [V''(X)].
	\end{aligned}
	\end{equation}
	Let us now consider $J_2^\epl(t)$. Considering Hölder conjugates $p,q,r$ the Hölder inequality yields
	\begin{equation}
		\abs{J_2^\epl(t)} \leq \E[(B_t^\epl)^p]^{1/p}\E[R(\epl,\delta)^q]^{1/q}\E[V''(Z^\epl)^r]^{1/r}.
	\end{equation}
	Now, we can bound the first two terms with \eqref{eq:estimate_beta} and \eqref{eq:remainder_zeta}, respectively. The third term is bounded due to Assumption \ref{as:regularityZeta} and Lemma \ref{lem:bounded_momentZ}. Hence, we have for $t$ sufficiently large
	\begin{equation}
		\abs{J_2^\epl(t)} \le C \left( \delta^{1/2} + \epl \delta^{-1/2} \right).
	\end{equation}
	We consider now $J_3^\epl(t)$. The Hölder inequality yields for conjugates $p$ and $q$
	\begin{equation}
		\abs{J_3^\epl(t)} \le \E[R(\epl, \delta)^{2p}]^{1/p} \E[V''(Z_t^\epl)^q]^{1/q},
	\end{equation}
	which, similarly as above, yields for $t$ sufficiently large
	\begin{equation}
		\abs{J_3^\epl(t)} \le C \left( \delta + \epl^2 \delta^{-1} \right).
	\end{equation}
	Therefore, since $\delta = \mathcal O(\epl^\zeta)$ for $\zeta \in (0, 2)$, the terms $J_2^\epl(t)$ and $J_3^\epl(t)$ vanish in the limit for $t \to \infty$ and $\epl \to 0$. Furthermore, by Lemma \ref{lem:distanceMandTildeM} and by weak convergence of the invariant measure $\mu^\epl$ to $\mu^0$, we have
	\begin{equation}
	\lim_{\epl \to 0} \widetilde{\mathcal M}_\epl = \mathcal M_0,
	\end{equation}
	where $\mathcal M_0$ is defined in \eqref{eq:DefCalM}. Therefore
	\begin{equation}
	\lim_{\epl \to 0} \mathcal A^\epl(\delta) = \Sigma \mathcal M_0^{-1} \E^{\phi^0} [V''(X)],
	\end{equation}
	and, finally, employing \eqref{eq:phi0} and \eqref{eq:DefCalM} and integrating by parts yields
	\begin{equation}
	\lim_{\epl \to 0} \mathcal A^\epl(\delta) = \Sigma \mathcal M_0^{-1} \frac1\Sigma \mathcal M_0 A = A,
	\end{equation}
	which implies the desired result.
\end{proof}

We conclude the analysis concerning the estimator $\widehat A_k$ for the effective drift coefficient with a negative convergence result, i.e., that if $\delta = \epl^\zeta$ with $\zeta > 2$, the estimator based on filtered data converges to the coefficient $\alpha$ of the unhomogenized equation. This result is relevant for two reasons. First, it shows the sharpness of the bound on $\zeta$ in the assumptions of Theorem \ref{thm:mainTheorem_zeta}. Second, it shows an interesting switch between two completely different regimes at $\zeta = 2$, which happens arbitrarily fast in the limit $\epl \to 0$. 
\begin{theorem}\label{thm:mainTheorem_zetaAlpha} Let the assumptions of Lemma \ref{lem:ergodicity} and Assmuption \ref{as:regularityZeta} hold. Let $\widehat A_k(X^\epl, T)$ be defined in \eqref{eq:AHatMixed} and $\delta = \epl^\zeta$ with $\zeta > 2$. If $\widetilde M$ is invertible, then
	\begin{equation}
	\lim_{\epl \to 0} \lim_{T \to \infty} \widehat A_k(X^\epl, T) = \alpha, \quad \text{in probability},
	\end{equation}
	where $\alpha$ is the drift coefficient of the multiscale equation \eqref{eq:SDE_MS}.
\end{theorem}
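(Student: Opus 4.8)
The plan is to reuse the reduction behind Theorem \ref{thm:mainTheorem_zeta}. By Remark \ref{rem:deltaPower} the term $I_2^\epl$ vanishes almost surely independently of $\epl$ and the identity \eqref{eq:for_zeta} uses no relation between $\delta$ and $\epl$, so it suffices to compute the limit of
\begin{equation*}
	\mathcal A^\epl(\delta) = \frac1\delta\,\widetilde{\mathcal M}_\epl^{-1}\,\E^{\rho^\epl}\bigl[(X^\epl - Z^\epl)^2 V''(Z^\epl)\bigr].
\end{equation*}
For $\zeta\in(0,2)$ the numerator tends to $\Sigma\,\E^{\phi^0}[V''(X)]$, producing $A$; I will instead show that for $\zeta>2$ it tends to $\sigma\,\E^{\phi^0}[V''(X)]$, i.e. the \emph{bare} diffusion $\sigma$ survives in place of the homogenized $\Sigma = K\sigma$. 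Since $\widetilde{\mathcal M}_\epl\to\mathcal M_0$ exactly as in Theorem \ref{thm:mainTheorem_zeta} (Lemma \ref{lem:distanceMandTildeM} and weak convergence of $\mu^\epl$), this yields $\lim_{\epl\to0}\mathcal A^\epl(\delta) = \sigma\,\mathcal M_0^{-1}\E^{\phi^0}[V''(X)]$, and the integration by parts closing the proof of Theorem \ref{thm:mainTheorem_zeta}, carried out with $\sigma$ instead of $\Sigma$, gives $\sigma\,\mathcal M_0^{-1}\E^{\phi^0}[V''(X)] = \tfrac\sigma\Sigma A = \tfrac1K A = \alpha$; equivalently this is the unfiltered limit $\lim_{\epl\to0}\lim_{T\to\infty}\widehat A(X^\epl,T)=\alpha$ of Theorem \ref{thm:Bias}.

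The difficulty is that Proposition \ref{prop:zeta} is no longer applicable: its remainder is of order $\epl$, so $\tfrac1\delta\E^{\phi^\epl}|R(\epl,\delta)|^2 = \mathcal O(\epl^{2-\zeta})$ diverges for $\zeta>2$. I would therefore replace it by the exact representation of the stationary solution of the second line of \eqref{eq:systemSDE}: after sending $t\to\infty$ by geometric ergodicity (Lemma \ref{lem:ergodicity}),
\begin{equation*}
	X_t^\epl - Z_t^\epl = e^{-t/\delta}\bigl(X_0^\epl - Z_0^\epl\bigr) + \int_0^t e^{-(t-s)/\delta}\,\d X_s^\epl = e^{-t/\delta}\bigl(X_0^\epl - Z_0^\epl\bigr) + \delta\,\widetilde B_t^\epl + D_t^\epl,
\end{equation*}
where the initial term vanishes as $t\to\infty$, $\widetilde B_t^\epl = \sqrt{2\sigma}\int_0^t k(t-s)\,\d W_s$ solves $\d\widetilde B_t^\epl = -\tfrac1\delta\widetilde B_t^\epl\,\d t + \tfrac{\sqrt{2\sigma}}\delta\,\d W_t$, and $D_t^\epl = -\int_0^t e^{-(t-s)/\delta}\bigl(\alpha\cdot V'(X_s^\epl) + \tfrac1\epl p'(X_s^\epl/\epl)\bigr)\,\d s$. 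The process $\widetilde B^\epl$ is exactly $B^\epl$ of Proposition \ref{prop:zeta} stripped of its cell-problem correction $\Phi'(Y^\epl)$; keeping the fast drift inside $D^\epl$ rather than folding it into the noise is the crucial change. Because $p'$ is bounded, $D^\epl$ obeys the pathwise bound $|D_t^\epl| \le \tfrac\delta\epl\norm{p'}_{L^\infty(\R)} + \int_0^t e^{-(t-s)/\delta}|\alpha\cdot V'(X_s^\epl)|\,\d s$, whence $\bigl(\E^{\phi^\epl}|D^\epl|^p\bigr)^{1/p}\le C\delta/\epl$ for every $p\ge1$, using Lemma \ref{lem:bounded_momentZ} for the $V'$ term.

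Expanding $\tfrac1\delta(X^\epl-Z^\epl)^2 = \delta(\widetilde B^\epl)^2 + 2\,\widetilde B^\epl D^\epl + \tfrac1\delta(D^\epl)^2$, I would treat the three resulting contributions to $\tfrac1\delta\E^{\rho^\epl}[(X^\epl-Z^\epl)^2 V''(Z^\epl)]$ exactly as $J_1,J_2,J_3$ in the proof of Theorem \ref{thm:mainTheorem_zeta}. The leading term is handled by the \emph{bare} analogues of Lemma \ref{lem:quadruple} and Lemma \ref{lem:quadruple_convergence}, obtained by rerunning their proofs with $(1+\Phi'(Y^\epl))$ replaced by $1$ (the Itô balance for $(\widetilde B^\epl)^2 V''(Z^\epl)$ at stationarity): these give $\delta\,\E[(\widetilde B^\epl)^2 V''(Z^\epl)] = \sigma\,\E[V''(Z^\epl)] + \mathcal O(\delta^{1/2}+\epl)$, converging to $\sigma\,\E^{\phi^0}[V''(X)]$ as $\epl\to0$ — here the convergence is immediate since, unlike in Lemma \ref{lem:quadruple_convergence}, there is no $(1+\Phi')^2$ to average, only $Z^\epl\to X$ and $\phi^\epl\to\phi^0$. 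For the remaining two contributions the bound on $D^\epl$, together with $\bigl(\E|\widetilde B^\epl|^p\bigr)^{1/p} = \mathcal O(\delta^{-1/2})$, Assumption \ref{as:regularityZeta} and Lemma \ref{lem:bounded_momentZ}, gives by Hölder
\begin{equation*}
	\tfrac1\delta\bigl|\E^{\rho^\epl}[(D^\epl)^2 V''(Z^\epl)]\bigr| \le C\,\tfrac{\delta}{\epl^2} = C\,\epl^{\zeta-2}, \qquad 2\bigl|\E^{\rho^\epl}[\widetilde B^\epl D^\epl V''(Z^\epl)]\bigr| \le C\,\tfrac{\delta^{1/2}}\epl = C\,\epl^{\zeta/2-1},
\end{equation*}
both of which vanish precisely when $\zeta>2$. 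Collecting the three terms gives the limit $\sigma\,\E^{\phi^0}[V''(X)]$ and hence the result in probability.

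The main obstacle is conceptual: one must recognize that the cell-problem expansion of Proposition \ref{prop:zeta} — exactly the mechanism that turns $\sigma$ into $\Sigma=K\sigma$ when $\zeta<2$ — ceases to be legitimate for $\zeta>2$, its remainder no longer being subleading, and replace it by the bare-noise representation in which the fast drift is retained inside $D^\epl$. The technical crux is then the single estimate $\bigl(\E|D^\epl|^p\bigr)^{1/p}=\mathcal O(\delta/\epl)$: the drift contribution to $\tfrac1\delta(X^\epl-Z^\epl)^2$ scales like $\delta/\epl^2 = \epl^{\zeta-2}$, which vanishes for $\zeta>2$ but diverges for $\zeta<2$. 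This is what makes the homogenizing correction disappear above the threshold and identifies $\zeta=2$ as the sharp transition matching Theorem \ref{thm:mainTheorem_zeta}.
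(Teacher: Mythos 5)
Your proposal is correct, but it takes a genuinely different route from the paper's proof. The paper does not pass through $\mathcal A^\epl(\delta)$ at all for $\zeta>2$: starting from \eqref{eq:alphaDecomposition} it writes $\lim_{T\to\infty} I_1^\epl(T) = \widetilde{\mathcal M}_\epl^{-1}\E^{\rho^\epl}\bigl[\tfrac1\epl p'(X^\epl/\epl)V'(Z^\epl)\bigr]$, Taylor-expands $V'(Z^\epl)=V'(X^\epl)+V''(\widetilde X^\epl)(Z^\epl-X^\epl)$, integrates the leading term by parts against the explicit marginal $\phi^\epl$ (so that its $\epl\to0$ limit is $\sigma\E^{\rho^0}[V''(X)] - \E^{\rho^0}[V'(X)\otimes V'(X)]\,\alpha = 0$), and bounds the remainder by $C\bigl(\delta\epl^{-2}+\delta^{1/2}\epl^{-1}\bigr)$ via H\"older and Lemma \ref{lem:distanceZandX2}; hence $I_1^\epl\to 0$ and the estimator tends to $\alpha$ directly, with no appeal to the magic equality in this regime. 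You instead retain the reduction to $\mathcal A^\epl(\delta)$ --- legitimate, since \eqref{eq:for_zeta} rests only on the exact identity of Lemma \ref{lem:FPMarginal}, exactly as Remark \ref{rem:deltaPower} states --- and substitute for Proposition \ref{prop:zeta} (correctly diagnosed as unusable when $\zeta>2$) the bare Duhamel representation $X_t^\epl-Z_t^\epl = e^{-t/\delta}(X_0^\epl-Z_0^\epl)+\delta\widetilde B_t^\epl+D_t^\epl$ with the fast drift kept inside $D^\epl$ and $\bigl(\E|D^\epl|^p\bigr)^{1/p}\le C\delta/\epl$. Your It\^o balance for $(\widetilde B^\epl)^2V''(Z^\epl)$ (the identity \eqref{eq:super_magic_formula} with $1+\Phi'$ replaced by $1$), the H\"older bounds of order $\epl^{\zeta/2-1}$ and $\epl^{\zeta-2}$, and the closing identity $\sigma\mathcal M_0^{-1}\E^{\phi^0}[V''(X)]=\tfrac\sigma\Sigma A=\tfrac1K A=\alpha$ are all sound. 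The trade-off: the paper's route is shorter and needs no ergodic theory for auxiliary processes, while yours runs structurally parallel to the proof of Theorem \ref{thm:mainTheorem_zeta} and makes the mechanism of the threshold transparent --- the corrector average of $(1+\Phi')^2$ that produces $\Sigma=K\sigma$ is no longer felt by the filter when $\delta\ll\epl^2$, so the bare $\sigma$ survives and yields $\alpha$; both arguments ultimately hinge on the same scaling $\delta/\epl^2=\epl^{\zeta-2}$ of the fast-drift contribution. Two minor repairs to your write-up: the $V'$ term in your bound on $D^\epl$ requires the bounded moments of $X^\epl$ (\cite[Corollary 5.4]{PaS07}) together with Lemma \ref{lem:FilterJensen}, not Lemma \ref{lem:bounded_momentZ} (which concerns $Z^\epl$); and you should assert explicitly that the triple $(X^\epl,Z^\epl,\widetilde B^\epl)$ is hypoelliptic and geometrically ergodic, at the same level of detail as the paper does for the quadruple in Lemma \ref{lem:quadruple}.
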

The proof is given in Appendix \ref{ap:ProofsDeltaZeta}. 

We conclude this section by proving a result of asymptotic unbiasedness for the estimator $\widehat \Sigma_k$ of the effective diffusion coefficient $\Sigma$ defined in \eqref{eq:SigmaHat}. The proof is given in Appendix \ref{ap:diffusion}.
\begin{theorem} \label{thm:diffusion_unbiasedness}
	Let the Assumptions of Theorem \ref{thm:mainTheorem_zetaAlpha} hold. Then, if $\delta = \epl^\zeta$, with $\zeta \in (0,2)$, it holds
	\begin{equation}
		\lim_{\epl \to 0} \lim_{T \to \infty} \widehat \Sigma_k(X^\epl,T) = \Sigma, \quad \text{in probability},
	\end{equation} 
	where $\Sigma$ is the diffusion coefficient of the homogenized equation \eqref{eq:SDE_HOM}.
\end{theorem}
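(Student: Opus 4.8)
The plan is to evaluate the iterated limit by sending $T \to \infty$ first, via the ergodic theorem, and only then performing the homogenization limit $\epl \to 0$, recycling almost verbatim the apparatus built for \cref{thm:mainTheorem_zeta}. Since the map $(x,z) \mapsto (x-z)^2$ is polynomially bounded, \cref{lem:ergodicity} applied to the joint process $(X^\epl, Z^\epl)^\top$ gives, for each fixed $\epl$,
\begin{equation}
\lim_{T \to \infty} \widehat\Sigma_k(X^\epl,T) = \frac1\delta\, \E^{\rho^\epl}\left[(X^\epl - Z^\epl)^2\right], \quad \text{a.s.},
\end{equation}
so it remains only to identify the $\epl \to 0$ limit of the deterministic right-hand side. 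Note that this inner limit is deterministic, so the convergence in probability asserted in the statement will follow at once once the outer limit is computed.

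Next I would insert the decomposition $X_t^\epl - Z_t^\epl = \delta B_t^\epl + R(\epl,\delta)$ of \cref{prop:zeta}, expand the square, and take $t \to \infty$ using geometric ergodicity of the quadruple $(X^\epl, Y^\epl, Z^\epl, B^\epl)$, obtaining
\begin{equation}
\frac1\delta\, \E^{\rho^\epl}\left[(X^\epl - Z^\epl)^2\right] = \lim_{t \to \infty}\left(J_1^\epl(t) + J_2^\epl(t) + J_3^\epl(t)\right),
\end{equation}
with $J_1^\epl(t) \defeq \delta\,\E[(B_t^\epl)^2]$, $J_2^\epl(t) \defeq 2\,\E[B_t^\epl R(\epl,\delta)]$ and $J_3^\epl(t) \defeq \delta^{-1}\,\E[R(\epl,\delta)^2]$. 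This is precisely the splitting used in the proof of \cref{thm:mainTheorem_zeta}, but with the factor $V''(Z^\epl)$ replaced by the constant $1$; consequently the cross and remainder terms are controlled by identical Hölder estimates. Using \eqref{eq:estimate_beta} and \eqref{eq:remainder_zeta} I would obtain $\abs{J_2^\epl(t)} \le C(\delta^{1/2} + \epl\delta^{-1/2})$ and $\abs{J_3^\epl(t)} \le C(\delta + \epl^2\delta^{-1})$ for $t$ large, both vanishing as $\epl \to 0$ exactly because $\delta = \epl^\zeta$ with $\zeta < 2$; this is where the upper bound on $\zeta$ is used.

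The heart of the matter is the main term $J_1^\epl$, which here is cleaner than in \cref{thm:mainTheorem_zeta}. Running the argument of \cref{lem:quadruple} with the constant test function in place of $V''$—for which the remainder $\widetilde R(\epl,\delta)$ vanishes identically, since no derivative of the test function is produced—yields the exact identity
\begin{equation}
\lim_{t \to \infty} J_1^\epl(t) = \delta\, \E^{\eta^\epl}\left[(B^\epl)^2\right] = \sigma\, \E^{\eta^\epl}\left[(1+\Phi'(Y^\epl))^2\right];
\end{equation}
the same relation also drops out of Itô's formula applied to $(B_t^\epl)^2$ together with stationarity of $(Y^\epl,B^\epl)$. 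Finally, \cref{lem:quadruple_convergence} with the constant test function gives $\lim_{\epl\to0}\sigma\,\E^{\eta^\epl}[(1+\Phi'(Y^\epl))^2] = \Sigma\,\E^{\phi^0}[1] = \Sigma$, which is nothing but the homogenization identity $\sigma\int_0^L (1+\Phi'(y))^2\,\mu(\d y) = \sigma K = \Sigma$. Combining the three limits then gives $\lim_{\epl\to0}\lim_{T\to\infty}\widehat\Sigma_k(X^\epl,T) = \Sigma$, as claimed.

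I expect the main obstacle to be the rigorous justification of the main-term identity $\delta\,\E^{\eta^\epl}[(B^\epl)^2] = \sigma\,\E^{\eta^\epl}[(1+\Phi'(Y^\epl))^2]$ and of its homogenization limit, that is, verifying that \cref{lem:quadruple,lem:quadruple_convergence} genuinely apply with the constant in place of $V''$. Concretely this means checking that the (trivially polynomially bounded) constant meets the integrability hypotheses invoked in those lemmas, and that the generator computation underpinning \cref{lem:quadruple} loses exactly the $V''$-dependent remainder, leaving an exact relation. The remaining ingredients—the ergodic reduction in $T$ and the Hölder control of $J_2^\epl$ and $J_3^\epl$—are routine and follow the proof of \cref{thm:mainTheorem_zeta} line by line.
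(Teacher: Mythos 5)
Your proposal is correct and follows essentially the same route as the paper's own proof: the paper likewise reduces via the ergodic theorem, expands $(X^\epl-Z^\epl)^2$ through \cref{prop:zeta}, bounds the cross and remainder terms by the same Cauchy--Schwarz/H\"older estimates using \eqref{eq:estimate_beta} and \eqref{eq:remainder_zeta}, and identifies the main term by applying \eqref{eq:super_magic_formula} with $f(z,b)=\tfrac12 b^2$, which is exactly the constant-test-function specialization of \cref{lem:quadruple} you describe, with vanishing remainder. The only cosmetic difference is that the paper obtains $\delta\,\E^{\eta^\epl}\left[(B^\epl)^2\right]=\sigma\,\E^{\eta^\epl}\left[(1+\Phi'(Y^\epl))^2\right]=\sigma K=\Sigma$ as an exact identity for every $\epl$ (reading $\E^{\eta^\epl}\left[(1+\Phi'(Y^\epl))^2\right]=K$ off the factorized $(X^\epl,Y^\epl)$-marginal) rather than invoking \cref{lem:quadruple_convergence} in the limit, a simplification your own remark about the homogenization identity already anticipates.
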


\section{The Bayesian Setting}\label{sec:Bayesian}

In this section we present a Bayesian reinterpretation of the inference procedure, which, given the structure of the problem, allows full uncertainty quantification with little more computational effort than required for the MLE. 

Let us fix a Gaussian prior $\mu_0 = \mathcal N(A_0, C_0)$ on $A$, where $A_0 \in \R^N$ and $C_0 \in \R^{N\times N}$ is symmetric positive definite. Then, given a final time $T > 0$, the posterior distribution $\mu_{T,\epl}$ admits a density $p(A \mid X^\epl)$ with respect to the Lebesgue measure which satisfies
\begin{equation}
p(A \mid X^\epl) = \frac1{Z^\epl} \, p(X^\epl \mid A) \, p_0(A),
\end{equation}
where $Z^\epl$ is the normalization constant, $p_0$ is the density of $\mu_0$, and where the likelihood $p(X^\epl \mid A)$ is given in \eqref{eq:Likelihood}. The log-posterior density is therefore given by
\begin{equation}\label{eq:Posterior}
\log p(A \mid X^\epl) = -\log Z^\epl - \frac{T}{2\Sigma} A \cdot h - \frac{T}{4\Sigma} A \cdot M A - \frac12 (A - A_0) \cdot C_0^{-1}(A-A_0),
\end{equation}
where $M$ and $h$ are defined in \eqref{eq:MLE}. Since the log-posterior density is quadratic in $A$, the posterior is Gaussian, and it is therefore sufficient to determine its mean and covariance to fully characterize it. We denote by $m_{T,\epl}$ and $C_{T,\epl}$ the mean and covariance matrix, respectively. Completing the squares in the log-posterior density, we formally obtain
\begin{equation}\label{eq:ParamBayes}
\begin{aligned}
C_{T,\epl}^{-1} &= C_0^{-1} + \frac{T}{2\Sigma} M, \\
C_{T,\epl}^{-1}m_{T,\epl} &= C_0^{-1}A_0 - \frac{T}{2\Sigma} h. 
\end{aligned}
\end{equation}
Under Assumption \ref{as:regularity}, one can show that the posterior at time $T > 0$ is well defined and given by $\mu_{T,\epl}(\cdotp\mid X^\epl) = \mathcal N(m_{T,\epl}, C_{T,\epl})$. Let us remark that in order to compute the posterior covariance $C_{T,\epl}$ the value of the diffusion coefficient $\Sigma$ of the homogenized equation is needed. Although the exact value is in general unknown, it can be estimated employing the subsampling technique presented in \cite{PaS07} or with the estimator $\widehat \Sigma_k$ given in \eqref{eq:SigmaHat} based on filtered data. In fact, we verified in practice that the estimator of the diffusion coefficient based on subsampling is more robust with respect to the subsampling step than the estimator for the drift coefficient. In the following theorem, we show that the posterior distribution obtained with no pre-processing of the data contracts asymptotically to the drift coefficient of the unhomogenized equation. We characterize the contraction by verifying that the posterior measure concentrates in arbitrarily small balls. Let us finally remark that the measure $\mu_{T, \epl}$ is a random measure, and therefore contraction has to be considered averaged with respect to the Wiener measure. The choice of the contraction measure and some parts of the proof are taken from \cite[Theorem 5.2]{PSZ13}.

\begin{theorem}\label{thm:BayesianBias} Under Assumption \ref{as:regularity}, the posterior measure $\mu_{T,\epl}(\cdotp\mid X^\epl) = \mathcal N(m_{T,\epl}, C_{T,\epl})$ satisfies for all $c > 0$
	\begin{equation}
		\lim_{\epl \to 0}\lim_{T \to \infty} \E \left[\mu_{T,\epl}\left(\{a\colon \norm{a-\alpha}_2\geq c\}\mid X^\epl\right)\right] = 0,
	\end{equation}
	where $\E$ denotes expectation with respect to the Wiener measure and $\alpha$ is the drift coefficient of the unhomogenized equation \eqref{eq:SDE_MS}.
\end{theorem}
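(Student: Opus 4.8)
The plan is to exploit the explicit Gaussian structure of the posterior together with the biased convergence of the MLE already established in Theorem \ref{thm:Bias}. Since $\mu_{T,\epl}(\cdotp \mid X^\epl) = \mathcal N(m_{T,\epl}, C_{T,\epl})$, the multivariate Chebyshev inequality gives, for every realization of the path,
\begin{equation}
	\mu_{T,\epl}\left(\{a : \norm{a - \alpha}_2 \geq c\} \mid X^\epl\right) \leq \frac{\norm{m_{T,\epl} - \alpha}_2^2 + \trace(C_{T,\epl})}{c^2},
\end{equation}
while the left-hand side is trivially bounded by $1$. The whole argument then reduces to controlling the posterior mean and covariance in the iterated limit and to interchanging that limit with the Wiener expectation.

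First I would handle the covariance. From \eqref{eq:ParamBayes} and the eigenvalue bound $\lambda_{\min}(M) \geq \bar\lambda$ of Assumption \ref{as:regularity}\ref{as:regularity_SPD}, one has $C_{T,\epl}^{-1} = C_0^{-1} + \frac{T}{2\Sigma}M \succeq \frac{T}{2\Sigma}\bar\lambda\, I$, whence the \emph{deterministic} operator-norm bound $\norm{C_{T,\epl}} \leq 2\Sigma/(\bar\lambda T)$ and thus $\trace(C_{T,\epl}) \leq 2\Sigma N/(\bar\lambda T) \to 0$ as $T\to\infty$. For the mean, substituting $h = -M\,\widehat A(X^\epl,T)$ into \eqref{eq:ParamBayes} and using $\frac{T}{2\Sigma}C_{T,\epl}M = I - C_{T,\epl}C_0^{-1}$ yields the identity
\begin{equation}
	m_{T,\epl} = \widehat A(X^\epl, T) + C_{T,\epl}C_0^{-1}\left(A_0 - \widehat A(X^\epl, T)\right),
\end{equation}
so the posterior mean is the MLE perturbed by a term of order $\OO(1/T)$. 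By the ergodic theorem $M \to \mathcal M_\epl$ and $h$ converges almost surely as $T\to\infty$ (its stochastic-integral part vanishing by the strong law of large numbers for martingales), so $\widehat A(X^\epl, T)$ converges almost surely to a limit $\widehat A_\epl$; combined with $C_{T,\epl}\to 0$ this gives $m_{T,\epl} \to \widehat A_\epl$ almost surely, and hence the right-hand side of the Chebyshev bound converges almost surely to $\norm{\widehat A_\epl - \alpha}_2^2/c^2$.

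To pass the limits inside the expectation I would work with the truncated quantity $\min\{1,\,(\norm{m_{T,\epl}-\alpha}_2^2 + \trace C_{T,\epl})/c^2\}$, which dominates the posterior mass of the ball complement and is bounded by $1$. Bounded convergence then yields
\begin{equation}
	\limsup_{T \to \infty} \E\left[\mu_{T,\epl}\left(\{a : \norm{a - \alpha}_2 \geq c\} \mid X^\epl\right)\right] \leq \E\left[\min\left\{1, \frac{\norm{\widehat A_\epl - \alpha}_2^2}{c^2}\right\}\right].
\end{equation}
Finally, Theorem \ref{thm:Bias} states precisely that $\widehat A_\epl \to \alpha$ almost surely as $\epl\to 0$, so the integrand on the right tends to $0$ almost surely while remaining dominated by $1$; a second application of bounded convergence sends the right-hand side to $0$, and together with the nonnegativity of the left-hand side this proves the claim.

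The main obstacle I anticipate is exactly this interchange of the iterated limit with the Wiener expectation. The cleanest route is the one above: bound the (already nonnegative and $\leq 1$) posterior mass by a Chebyshev expression, truncate it at $1$, and apply bounded convergence twice. This entirely sidesteps any need for $L^2$-type moment bounds on the MLE, which Theorem \ref{thm:Bias} does not supply. The only points requiring genuine care are the almost sure convergence $\widehat A(X^\epl, T) \to \widehat A_\epl$ — resting on geometric ergodicity of $X^\epl$ and the vanishing of the martingale term in $h$ — and the deterministic $\OO(1/T)$ covariance bound, which ensures the $\trace(C_{T,\epl})$ contribution is controlled uniformly in the path.
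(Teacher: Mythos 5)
Your proposal is correct and follows essentially the same route as the paper: reduce the posterior mass to a second-moment (Markov/Chebyshev) bound, show $\trace(C_{T,\epl}) = \OO(1/T)$ deterministically via Assumption \ref{as:regularity}\ref{as:regularity_SPD}, identify $m_{T,\epl}$ as an $\OO(1/T)$ perturbation of $\widehat A(X^\epl,T)$, and invoke Theorem \ref{thm:Bias} together with the ergodic theorem and the strong law of large numbers for martingales. Your variations — a single Chebyshev bound about $\alpha$ in place of the paper's triangle-inequality split, the direct eigenvalue bound $C_{T,\epl}^{-1} \succeq \frac{T\bar\lambda}{2\Sigma} I$ instead of the algebraic identity involving $Q$, and the explicit truncation-plus-bounded-convergence justification of the limit interchange (which the paper handles more implicitly) — are sound streamlinings of the same argument.
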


\begin{remark} The result above has the same consequences in the Bayesian setting as Theorem \ref{thm:Bias} has for the MLE. In particular, it shows that the posterior distribution obtained when data is not pre-processed concentrates asymptotically on the drift coefficient of the unhomogenized equation \eqref{eq:SDE_MS}. Moreover, a partial result which can be deduced from the proof is that in the limit for $T \to \infty$ and for a positive value $\epl > 0$ the Bayesian and the MLE approaches are equivalent. In particular, we have for all $\epl > 0$
	\begin{equation}
	\begin{aligned}
	&\lim_{T\to\infty} \norm{C_{T,\epl}}_2 = 0,\\
	&\lim_{T \to\infty}\norm{m_{T,\epl} - \widehat A(X^\epl, T)}_2 =0,
	\end{aligned}	
	\end{equation} 
	i.e., the weak limit of the posterior $\mu_{T,\epl}$ for $T\to \infty$ is the Dirac delta concentrated on the limit of $\widehat A(X^\epl, T)$ for $T\to \infty$. 
\end{remark}

\begin{proof}[Proof of Theorem \ref{thm:BayesianBias}] The proof of \cite[Theorem 5.2]{PSZ13} guarantees that if the trace of $C_{T,\epl}$ tends to zero and if the mean $m_{T,\epl}$ tends to $\alpha$, then the desired result holds. Indeed, the triangle inequality yields
	\begin{equation}
	\begin{aligned}
		\E \left[\mu_{T,\epl}\left(\{a\colon \norm{a-\alpha}_2\geq c\}\mid X^\epl\right)\right] &\leq \E \left[\mu_{T,\epl}\left(\left\{a\colon \norm{a-m_{T,\epl}}_2\geq \frac{c}{2}\right\}\mid X^\epl\right)\right]\\
		&\quad + \mathbb P\left(\norm{m_{T,\epl} - \alpha}_2 \geq \frac{c}{2}\right).
	\end{aligned}
	\end{equation}
	If the mean converges in probability, then the second term vanishes. For the first term, Markov's inequality yields
	\begin{equation}
		\mu_{T,\epl}\left(\left\{a\colon \norm{a-m_{T,\epl}}_2\geq \frac{c}{2}\right\}\mid X^\epl\right) \leq \frac{4}{c^2}\int_{\R^N} \norm{a-m_{T,\epl}}_2^2 \, \mu_{T,\epl}(\d a\mid X^\epl),
	\end{equation}
	and a change of variable simply gives
	\begin{equation}
		\int_{\R^N} \norm{a-m_{T,\epl}}_2^2 \, \mu_{T,\epl}(\d a\mid X^\epl) = \trace(C_{T,\epl}).
	\end{equation}
	This proves that we just have to verify that the covariance matrix vanishes and that the mean tends to the coefficient $\alpha$. Let us first consider the covariance matrix. An algebraic identity yields
	\begin{equation}
	C_{T,\epl} = \frac{2\Sigma}{T} \left(M^{-1} - Q^{-1}\right),
	\end{equation}
	where 
	\begin{equation}
	Q = M + \frac{T}{2\Sigma} M C_0 M.
	\end{equation}
	Let us first remark that due to the hypothesis on $M$ (Assumption \ref{as:regularity}\ref{as:regularity_SPD}) and the ergodic theorem it holds for all $T > 0$
	\begin{equation}
		\norm{M^{-1}}_2 \leq \frac1{\bar \lambda},
	\end{equation}
	where $\bar \lambda$ is given in Assumption\ref{as:regularity}\ref{as:regularity_SPD}. We now have that for generic symmetric positive definite matrices $R$ and $S$ it holds
	\begin{equation}
		\norm{(R+S)^{-1}}_2 \leq \norm{S^{-1}}_2.
	\end{equation}
	Applying this inequality to $Q^{-1}$, we obtain
	\begin{equation}
		\norm{Q^{-1}}_2 \leq \frac{2\Sigma}T \norm{(MC_0M)^{-1}}_2 \leq \frac{2\Sigma}T \norm{M^{-1}}_2^2 \norm{C_0^{-1}}_2 = \frac{2\Sigma}{T\bar \lambda^2} \norm{C_0^{-1}}_2,
	\end{equation}
	which implies
	\begin{equation}
		\lim_{T \to \infty}\norm{Q^{-1}}_2 = 0,
	\end{equation}
	and due to the triangle inequality
	\begin{equation}\label{eq:CovarianceShrink}
		\lim_{T \to \infty}\norm{C_{T,\epl}}_2 = 0.
	\end{equation}
	We proved that in the limit for $T \to \infty$ the covariance shrinks to zero independently of $\epl$. We now consider the mean. First, we remark that the triangle inequality yields
	\begin{equation}
		\norm{m_{T,\epl} - \alpha}_2 \leq \norm{m_{T,\epl} - \widehat A(X^\epl, T)}_2 + \norm{\widehat A(X^\epl, T) - \alpha}_2.
	\end{equation}
	For the second term, Theorem \ref{thm:Bias} implies 
	\begin{equation}
		\lim_{\epl \to 0} \lim_{T \to \infty}\norm{\widehat A(X^\epl, T) - \alpha}_2 = 0, \quad \text{a.s.}
	\end{equation}
	Let us now consider the first term.	Replacing the expression of the maximum likelihood estimator \eqref{eq:MLE} and due to the Cauchy--Schwarz and triangle inequalities, we obtain
	\begin{equation}
	\begin{aligned}
		\norm{m_{T,\epl} - \widehat A(X^\epl, T)}_2 &= \frac{2\Sigma}T\norm{M^{-1}C_0^{-1}A_0 - Q^{-1}\left(C_0^{-1}A_0 - \frac{T}{2\Sigma} h \right)}_2\\
		&\leq \frac{2\Sigma}{T\bar \lambda} \norm{C_0^{-1}}_2 \left(\norm{A_0}_2 + \frac1{\bar \lambda}\norm{h}_2 + \frac{2\Sigma}{T\bar \lambda} \norm{C_0^{-1}}_2\norm{A_0}_2\right).
	\end{aligned}
	\end{equation}
	Moreover, the ergodic theorem and the strong law of large numbers for martingales guarantee that $\norm{h}_2$ is bounded a.s. for $T \to \infty$. Therefore
	\begin{equation}
	\lim_{T\to\infty} \norm{m_{T,\epl} - \widehat A(X^\epl, T)}_2 = 0, \quad \text{a.s.},
	\end{equation}
	independently of $\epl$. Finally, 
	\begin{equation}
		\lim_{\epl \to 0} \lim_{T \to \infty}\norm{m_{T,\epl} - \alpha}_2 = 0, \quad \text{a.s.},
	\end{equation}
	which, together with \eqref{eq:CovarianceShrink}, implies the desired result. 
\end{proof}

\subsection{The Filtered Data Approach}\label{sec:BayesianFilter}

In this section, we present how to correct the asymptotic biasedness of the posterior highlighted by Theorem \ref{thm:BayesianBias} employing filtered data. In the Bayesian setting, we consider the modified likelihood function
\begin{equation}
	\widetilde p(X^\epl \mid A) = \exp\left(-\frac{\widetilde I(X^\epl\mid A)}{2\Sigma} \right), 
\end{equation}
where 
\begin{equation}
\begin{aligned}
	\widetilde I(X^\epl \mid A) & = \int_0^T A \cdot V'(Z^\epl_t) \dd X^\epl_t + \frac12 \int_0^T \left(A \cdot V'(X^\epl_t)\right)^2 \dd t \\
	&= \widetilde h \cdot A + \frac12 A \cdot M A.
\end{aligned}
\end{equation}
Since $M$ is symmetric positive definite, the function $\widetilde p(X^\epl \mid A)$ is indeed a valid Gaussian likelihood function. We then obtain the modified posterior $\widetilde \mu_{T,\epl} = \mathcal N(\widetilde m_{T, \epl}, C_{T, \epl})$, whose parameters are given by
\begin{equation}
\begin{aligned}
	C_{T, \epl}^{-1} &= C_0^{-1} + \frac{T}{2\Sigma} M, \\
	C_{T,\epl}^{-1}\widetilde m_{T,\epl} &= C_0^{-1}A_0 - \frac{T}{2\Sigma} \widetilde h. 
\end{aligned}	
\end{equation}
Let us remark that the posterior $\widetilde \mu_{T,\epl}$ has the same covariance as $\mu_{T,\epl}$ given in \eqref{eq:ParamBayes} and that therefore it is indeed a valid Gaussian posterior distribution. Nevertheless, in order to employ the tool of convergence introduced in Theorem \ref{thm:BayesianBias}, we need to study the properties of the MLE based on the likelihood $\widetilde p(X^\epl \mid A)$, i.e., the quantity
\begin{equation}\label{eq:AHatMixedTilde}
	\widetilde A_k(X^\epl, T) = - M^{-1} \widetilde h.
\end{equation}
The following theorem guarantees the unbiasedness of this estimator under a condition on the parameter $\delta$ of the filter.
\begin{theorem}\label{thm:mainTheoremTilde} Let the assumptions of Theorem \ref{thm:mainTheorem_zeta} hold. Then, if $\delta = \epl^\zeta$, with $\zeta \in (0, 2)$, it holds
	\begin{equation}
	\lim_{\epl \to 0} \lim_{T \to \infty} \widetilde A_k(X^\epl, T) = A, \quad \text{in probability},
	\end{equation} 
	for $\widetilde A_k(X^\epl, T)$ defined in \eqref{eq:AHatMixedTilde}.
\end{theorem}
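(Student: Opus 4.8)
The plan is to reduce the statement to the computation already performed in the proof of Theorem \ref{thm:mainTheorem_zeta}. The estimators $\widetilde A_k = -M^{-1}\widetilde h$ from \eqref{eq:AHatMixedTilde} and $\widehat A_k = -\widetilde M^{-1}\widetilde h$ from \eqref{eq:AHatMixed} share the same vector $\widetilde h$ and differ only in that the symmetric matrix $M$ replaces the mixed matrix $\widetilde M$ in front. I would therefore start from the decomposition \eqref{Ahat_decomposition} of $\widetilde h$, obtained by inserting the dynamics \eqref{eq:SDE_MS}, and multiply it by $-M^{-1}$ to write
\[
\widetilde A_k(X^\epl,T) = M^{-1}\widetilde M\,\alpha + \frac1T M^{-1}\!\int_0^T \frac1\epl p'\!\left(\frac{X^\epl_t}{\epl}\right)V'(Z^\epl_t)\dd t - \frac{\sqrt{2\sigma}}{T} M^{-1}\!\int_0^T V'(Z^\epl_t)\dd W_t.
\]

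Taking $T\to\infty$, the ergodic theorem gives $M\to\mathcal M_\epl$ and $\widetilde M\to\widetilde{\mathcal M}_\epl$ (defined in \eqref{eq:DefCalM} and \eqref{eq:DefCalMTilde}), while the stochastic integral term vanishes almost surely by the strong law of large numbers for martingales, using the square integrability of $V'(Z^\epl)$ that follows from Lemma \ref{lem:bounded_momentZ} and Assumption \ref{as:regularity}\ref{as:regularity_Lip}; note this is even more direct than in Theorem \ref{thm:mainTheorem}, since here $M^{-1}$ already converges to $\mathcal M_\epl^{-1}$. Hence
\[
\lim_{T\to\infty}\widetilde A_k(X^\epl,T) = \mathcal M_\epl^{-1}\widetilde{\mathcal M}_\epl\,\alpha + \mathcal M_\epl^{-1}\,\E^{\rho^\epl}\!\left[\frac1\epl p'\!\left(\frac{X^\epl}{\epl}\right)V'(Z^\epl)\right],\quad\text{a.s.}
\]

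The crux is the identity established inside the proof of Theorem \ref{thm:mainTheorem}, namely the integration by parts using the marginal \eqref{eq:marginalX} combined with the equality \eqref{eq:MagicEquality} of Lemma \ref{lem:FPMarginal}, which yields
\[
\E^{\rho^\epl}\!\left[\frac1\epl p'\!\left(\frac{X^\epl}{\epl}\right)V'(Z^\epl)\right] = -\widetilde{\mathcal M}_\epl\,\alpha + \frac1\delta\,\E^{\rho^\epl}[(X^\epl-Z^\epl)^2 V''(Z^\epl)].
\]
Substituting this, the two occurrences of $\mathcal M_\epl^{-1}\widetilde{\mathcal M}_\epl\,\alpha$ cancel exactly, leaving
\[
\lim_{T\to\infty}\widetilde A_k(X^\epl,T) = \frac1\delta\,\mathcal M_\epl^{-1}\,\E^{\rho^\epl}[(X^\epl-Z^\epl)^2 V''(Z^\epl)],\quad\text{a.s.},
\]
which is exactly the quantity $\mathcal A^\epl(\delta)$ from the proof of Theorem \ref{thm:mainTheorem_zeta}, with the prefactor $\widetilde{\mathcal M}_\epl^{-1}$ replaced by $\mathcal M_\epl^{-1}$.

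It then remains to run the $\epl\to 0$ analysis verbatim: Proposition \ref{prop:zeta} together with Lemmas \ref{lem:quadruple} and \ref{lem:quadruple_convergence}, through the decomposition into $J_1^\epl, J_2^\epl, J_3^\epl$ whose remainder terms vanish precisely when $\zeta\in(0,2)$, gives $\lim_{\epl\to 0}\frac1\delta\E^{\rho^\epl}[(X^\epl-Z^\epl)^2 V''(Z^\epl)] = \Sigma\,\E^{\phi^0}[V''(X)]$. Since $\mathcal M_\epl = \E^{\phi^\epl}[V'(X^\epl)\otimes V'(X^\epl)]$ depends only on the $X^\epl$-marginal, weak convergence of the invariant measure with the uniform moment bounds gives $\mathcal M_\epl\to\mathcal M_0$, and a final integration by parts against $\phi^0$ in \eqref{eq:phi0} gives $\Sigma\,\E^{\phi^0}[V''(X)] = \mathcal M_0 A$; thus the double limit equals $\mathcal M_0^{-1}\mathcal M_0 A = A$ in probability. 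The main point, rather than a genuine obstacle, is the exact cancellation above: it shows that symmetrizing the matrix, which is what makes $\widetilde p(X^\epl\mid A)$ a legitimate Gaussian likelihood, leaves the asymptotics untouched, because both $\mathcal M_\epl$ and $\widetilde{\mathcal M}_\epl$ collapse to the same limit $\mathcal M_0$ as $\delta=\epl^\zeta\to 0$, so that no estimate beyond those of Theorem \ref{thm:mainTheorem_zeta} is required.
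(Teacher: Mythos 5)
Your proof is correct, and it takes a genuinely different route from the paper's. The paper does not recompute anything: it treats $\widetilde A_k$ as a perturbation of $\widehat A_k$, noting that the two estimators share the vector $\widetilde h$ and writing
\begin{equation}
\lim_{T \to \infty} \left(\widetilde A_k(X^\epl, T) - \widehat A_k(X^\epl, T)\right) = \mathcal M_\epl^{-1}\left(\mathcal M_\epl - \widetilde{\mathcal M}_\epl\right)\lim_{T \to \infty} \widehat A_k(X^\epl, T),
\end{equation}
so that Lemma~\ref{lem:distanceMandTildeM}, the control of $\mathcal M_\epl^{-1}$ from Assumption~\ref{as:regularity}\ref{as:regularity_SPD}, and the boundedness of $\widehat A_k$ (from Theorem~\ref{thm:mainTheorem_zeta}) give $\lim_{T\to\infty}\norm{\widetilde A_k - \widehat A_k}_2 \leq C(\epl + \delta^{1/2})$; the triangle inequality and Theorem~\ref{thm:mainTheorem_zeta} then conclude since $\delta = \epl^\zeta$. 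You instead rerun the computation of Theorems~\ref{thm:mainTheorem} and~\ref{thm:mainTheorem_zeta} with the prefactor $M^{-1}$ in place of $\widetilde M^{-1}$, and your central observation --- that the invariant-measure identity built from \eqref{eq:marginalX} and \eqref{eq:MagicEquality} makes the drift contribution $\mathcal M_\epl^{-1}\widetilde{\mathcal M}_\epl\,\alpha$ cancel \emph{exactly}, leaving $\frac1\delta \mathcal M_\epl^{-1}\E^{\rho^\epl}[(X^\epl - Z^\epl)^2 V''(Z^\epl)]$ in the limit $T \to \infty$ --- is correct, as are the remaining steps, which reuse the $J_1^\epl, J_2^\epl, J_3^\epl$ analysis verbatim. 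Both arguments ultimately rest on the same hard estimates (Proposition~\ref{prop:zeta}, Lemmas~\ref{lem:quadruple} and~\ref{lem:quadruple_convergence}), so the difference lies in the reduction step, and each version buys something: the paper's perturbative argument is shorter given the earlier results and yields, as a by-product, the quantitative rate $C(\epl + \delta^{1/2})$ for the discrepancy between the two estimators; your recomputation exposes structure the paper's argument hides --- symmetrizing the matrix, which is what makes $\widetilde p(X^\epl \mid A)$ a legitimate Gaussian likelihood, costs nothing asymptotically because the $\alpha$-term cancels identically rather than merely being small --- and it is slightly leaner on hypotheses, since you never invert $\widetilde M$ or $\widetilde{\mathcal M}_\epl$ (only $M^{-1}$, which Assumption~\ref{as:regularity}\ref{as:regularity_SPD} controls directly, making your disposal of the martingale term more immediate than in Theorem~\ref{thm:mainTheorem}), while the convergence $\mathcal M_\epl \to \mathcal M_0$ you need follows from weak convergence of the $x$-marginal alone, without invoking Lemma~\ref{lem:distanceMandTildeM}.
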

\begin{proof} We first consider the difference between the two estimators $\widetilde A_k(X^\epl, T)$ and $\widehat A_k(X^\epl, T)$. In particular, the ergodic theorem and an algebraic equality imply
	\begin{equation}
	\begin{aligned}
	\lim_{T \to \infty} \left(\widetilde A_k(X^\epl, T) - \widehat A_k(X^\epl, T)\right) &= \left(\mathcal M_\epl^{-1} - \widetilde{\mathcal M}_\epl^{-1}\right) \lim_{T \to \infty}\widetilde h \\
	&= -\mathcal M_\epl^{-1}\left(\mathcal M_\epl - \widetilde{\mathcal M}_\epl\right)  \widetilde{\mathcal M}_\epl^{-1} \lim_{T \to \infty} \widetilde h\\
	&= \mathcal M_\epl^{-1}\left(\mathcal M_\epl - \widetilde{\mathcal M}_\epl\right)\lim_{T \to \infty} \widehat A_k(X^\epl, T),
	\end{aligned}
	\end{equation}
	almost surely, where $\mathcal M_\epl$ and $\widetilde{\mathcal M}_\epl$ are defined in \eqref{eq:DefCalM} and \eqref{eq:DefCalMTilde}, respectively. Therefore, due to Assumption \ref{as:regularity} which allows controlling the norm of $\mathcal M_\epl^{-1}$ and due to Lemma \ref{lem:distanceMandTildeM} we have for a constant $C > 0$
	\begin{equation}\label{eq:DistATildeAHat}
	\lim_{T \to \infty} \norm{\widetilde A_k(X^\epl, T) - \widehat A_k(X^\epl, T)}_2 \leq C \left(\epl + \delta^{1/2} \right),
	\end{equation}
	where we remark that $\widehat A_k(X^\epl, T)$ has a bounded norm for $\epl$ sufficiently small due to Theorem \ref{thm:mainTheorem_zeta}. Now, the triangle inequality yields
	\begin{equation}
	\norm{\widetilde A_k(X^\epl, T) - A}_2 \leq \norm{\widetilde A_k(X^\epl, T) - \widehat A_k(X^\epl, T)}_2 + \norm{\widehat A_k(X^\epl, T) - A}_2.
	\end{equation}
	Therefore, due to Theorem \ref{thm:mainTheorem_zeta}, the inequality \eqref{eq:DistATildeAHat} and since $\delta = \epl^\zeta$, the desired result holds.
\end{proof}

\begin{remark} One could argue that we could have carried on the whole analysis for the estimator $\widetilde A_k(X^\epl, T)$ instead of the estimator $\widehat A_k(X^\epl, T)$. Nevertheless, the latter guarantees the strong result of almost sure convergence in case $\delta$ is independent of $\epl$, which is false for the former. Conversely, analysing the properties of the estimator $\widetilde A_k(X^\epl, T)$ is fundamental for the Bayesian setting, in which the matrix $\widetilde M$ cannot be employed as its symmetric part is not positive definite in general.
\end{remark}

In light of the proof of Theorem \ref{thm:BayesianBias}, Theorem \ref{thm:mainTheoremTilde} guarantees that the mean of the posterior distribution $\widetilde \mu_{T, \epl}$ converges to the drift coefficient of the homogenized equation. Since the covariance matrix is the same for $\mu_{T, \epl}$ and $\widetilde \mu_{T, \epl}$, it is possible to prove a positive convergence result for $\widetilde \mu_{T, \epl}$, which is given by the following Theorem.
\begin{theorem}\label{thm:Bayesian} Let the Assumptions of Theorem \ref{thm:mainTheoremTilde} hold. Then, the modified posterior measure $\widetilde\mu_{T,\epl}(\cdotp\mid X^\epl) = \mathcal N(\widetilde m_{T,\epl}, C_{T,\epl})$ satisfies
	\begin{equation}
		\lim_{\epl \to 0}\lim_{T \to \infty} \E \left[\widetilde \mu_{T,\epl}\left(\{a\colon \norm{a-A}_2\geq c\}\mid X^\epl\right)\right] = 0,
	\end{equation}
	where $\E$ denotes expectation with respect to the Wiener measure and $A$ is the drift coefficient of the homogenized equation \eqref{eq:SDE_HOM}.
\end{theorem}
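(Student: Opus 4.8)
The plan is to mirror the proof of Theorem \ref{thm:BayesianBias} as closely as possible, exploiting the fact that the modified posterior $\widetilde\mu_{T,\epl} = \mathcal N(\widetilde m_{T,\epl}, C_{T,\epl})$ shares its covariance with the unmodified posterior. Applying the triangle inequality and Markov's inequality exactly as in that proof, the contraction statement reduces to verifying two facts: that $\trace(C_{T,\epl}) \to 0$, and that the mean $\widetilde m_{T,\epl}$ converges to $A$ in probability in the iterated limit $\lim_{\epl \to 0}\lim_{T \to \infty}$.

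First I would dispatch the covariance. Since $C_{T,\epl}$ is literally the same matrix appearing in \eqref{eq:ParamBayes}, the bound $\norm{C_{T,\epl}}_2 \to 0$ as $T \to \infty$, established in the proof of Theorem \ref{thm:BayesianBias} independently of $\epl$ via the decomposition $C_{T,\epl} = \tfrac{2\Sigma}{T}(M^{-1} - Q^{-1})$, carries over without change. Hence $\trace(C_{T,\epl}) \to 0$ and the concentration-around-the-mean term vanishes.

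For the mean, I would run the same algebraic identity as in Theorem \ref{thm:BayesianBias}, but with $h$ replaced by $\widetilde h$ and the MLE $\widehat A$ replaced by $\widetilde A_k(X^\epl, T) = -M^{-1}\widetilde h$ from \eqref{eq:AHatMixedTilde}. This yields $\norm{\widetilde m_{T,\epl} - \widetilde A_k(X^\epl, T)}_2 \to 0$ as $T \to \infty$, independently of $\epl$, provided $\norm{\widetilde h}_2$ stays bounded almost surely for large $T$. The latter follows just as the boundedness of $\norm{h}_2$ did: substituting the dynamics of $X^\epl$ into $\widetilde h$ and invoking the ergodic theorem together with the strong law of large numbers for martingales. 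Then Theorem \ref{thm:mainTheoremTilde} supplies $\widetilde A_k(X^\epl, T) \to A$ in probability in the iterated limit, and the triangle inequality $\norm{\widetilde m_{T,\epl} - A}_2 \leq \norm{\widetilde m_{T,\epl} - \widetilde A_k}_2 + \norm{\widetilde A_k - A}_2$ delivers $\widetilde m_{T,\epl} \to A$ in probability.

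The main subtlety, as opposed to a genuine obstacle, is that convergence of the mean is now only in probability rather than almost surely, inherited from Theorem \ref{thm:mainTheoremTilde} and ultimately from the multiscale regime $\delta = \epl^\zeta$. This is harmless here: the decomposition used to bound the target expectation isolates a term of the form $\mathbb P(\norm{\widetilde m_{T,\epl} - A}_2 \geq c/2)$, so convergence in probability is exactly what is required for that term to vanish. Assembling the covariance and mean estimates then completes the argument precisely as in Theorem \ref{thm:BayesianBias}.
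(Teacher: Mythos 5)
Your proposal is correct and takes essentially the same route as the paper: the paper's proof of this theorem is precisely to reuse the proof of Theorem \ref{thm:BayesianBias} (the covariance $C_{T,\epl}$ is identical, so $\trace(C_{T,\epl}) \to 0$ carries over, and the mean-to-MLE comparison goes through with $h$ replaced by $\widetilde h$) combined with Theorem \ref{thm:mainTheoremTilde} for the convergence $\widetilde A_k(X^\epl,T) \to A$. Your remark that convergence in probability suffices, since the decomposition only requires $\mathbb{P}\left(\norm{\widetilde m_{T,\epl} - A}_2 \geq c/2\right) \to 0$, is exactly the observation that makes the transfer legitimate.
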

\begin{proof} The proof follows from the proof of Theorem \ref{thm:BayesianBias} and from Theorem \ref{thm:mainTheoremTilde}.
\end{proof}

\section{Numerical Experiments}\label{sec:NumExp}

In this section we show numerical experiments confirming our theoretical findings and showcasing the potential of the filtered data approach to overcome model misspecification arising when multiscale data is used to fit homogenized models.

\begin{remark} In practice, we consider for numerical experiment the data to be in the form of a high-frequency discrete time series from the solution $X^\epl$ of \eqref{eq:SDE_MS}. Let $\tau > 0$ be the time step at which data is observed, and let $X^\epl \defeq (X^\epl_0, X^\epl_\tau, X^\epl_{2\tau}, \ldots)$. We then compute the estimator $\widehat A_k$ as
	\begin{equation}
		\widehat A_{k,\tau}(X^\epl, T) = - \widetilde M_\tau^{-1}(X^\epl) \widetilde h_\tau(X^\epl),
	\end{equation}
	where
	\begin{equation}
		\widetilde M_\tau(X^\epl) = \frac{\tau}{T} \sum_{j=0}^{n-1} V'(Z^\epl_{j\tau}) \otimes V'(X^\epl_{j\tau}) , \qquad \widetilde h_\tau(X^\epl) = \frac{1}{T} \sum_{j=0}^{n-1} V'(Z^\epl_{j\tau}) (X^\epl_{(j+1)\tau} - X^\epl_{j\tau}).
	\end{equation}
	We take in all experiments $\tau \ll \epl^2$, so that the discretization of the data has negligible effects and does not compromise the validity of our theoretical results.
\end{remark}

\subsection{Parameters of the Filter}\label{sec:Num_Param}

For the first preliminary experiments, we consider $N = 1$ and the quadratic potential $V(x) = x^2/2$. In this case, the solution of the homogenized equation is an Ornstein--Uhlenbeck process. Moreover, we set the the fast potential in the multiscale equation \eqref{eq:SDE_MS} as $p(y) = \cos(y)$. In all experiments, data is generated employing the Euler--Maruyama method with a fine time step.

\subsubsection{Verification of Theoretical Results}\label{sec:Num_Param1}

\begin{figure}[t]
	\centering
	\begin{tabular}{cc}
	\includegraphics[]{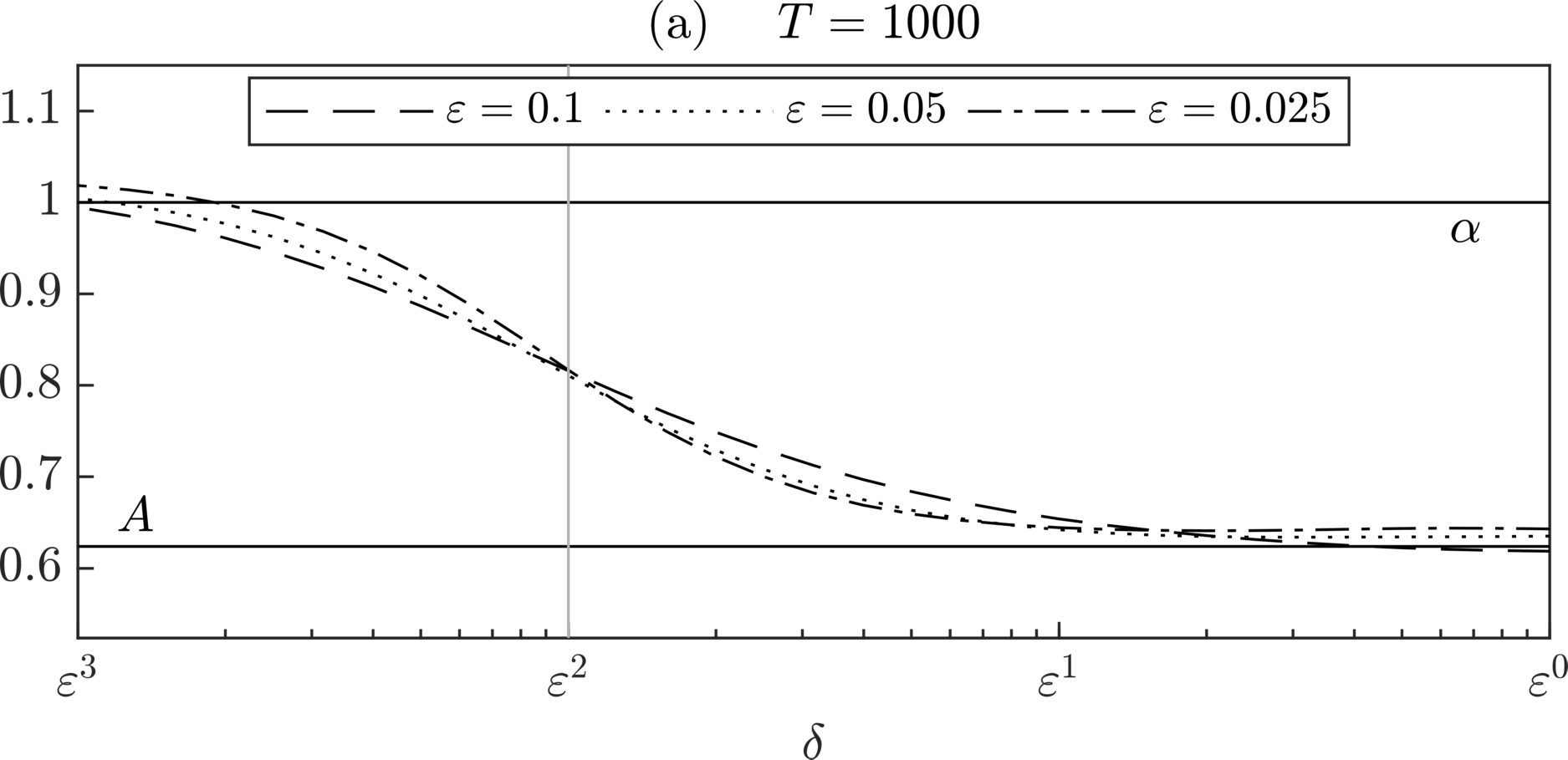} & \includegraphics[]{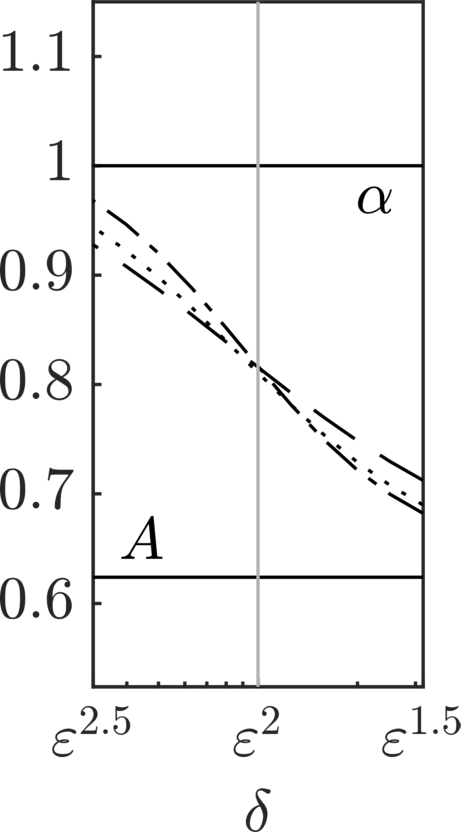} \\
	\addlinespace[0.5em]
	\includegraphics[]{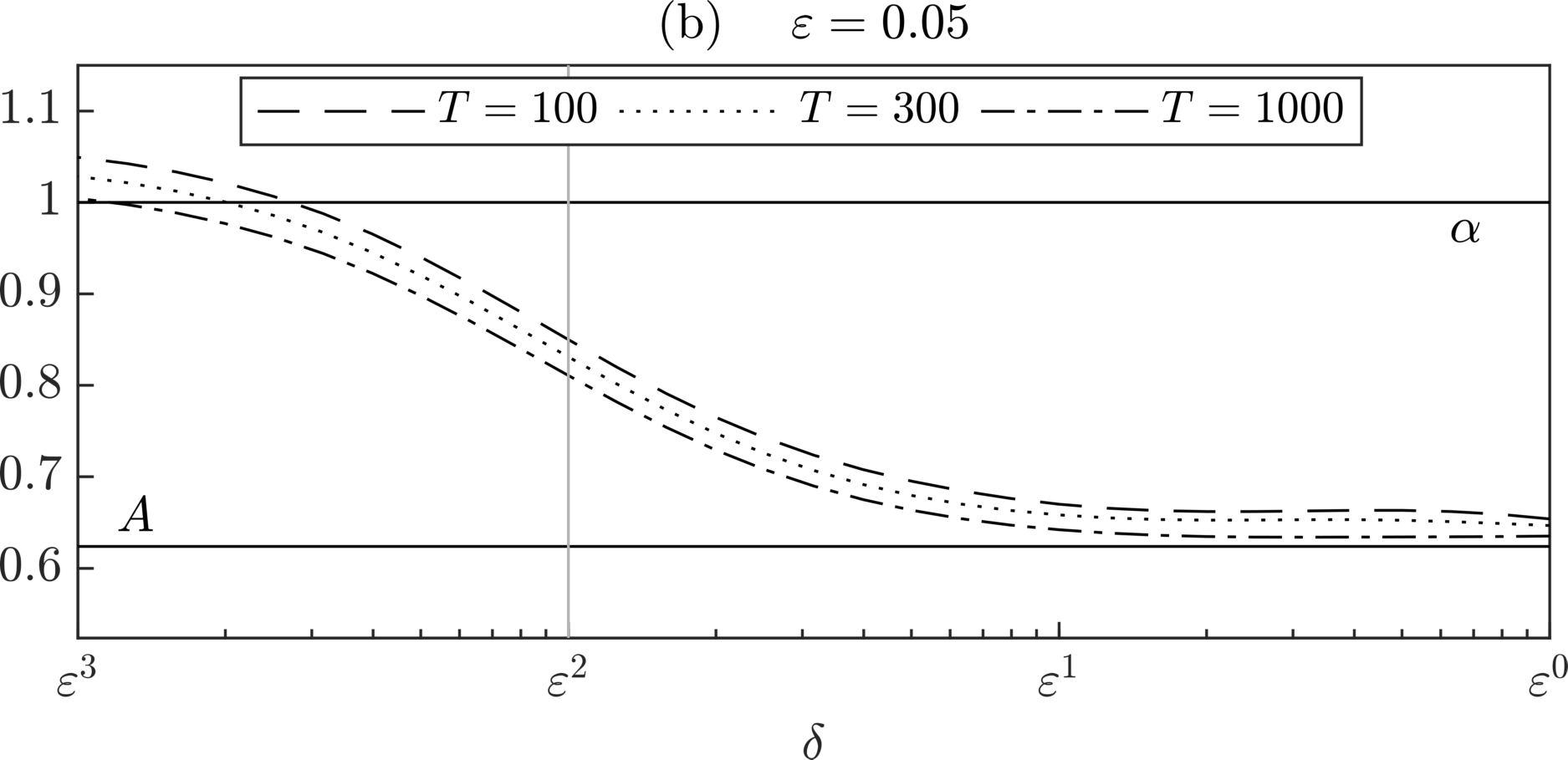} & \includegraphics[]{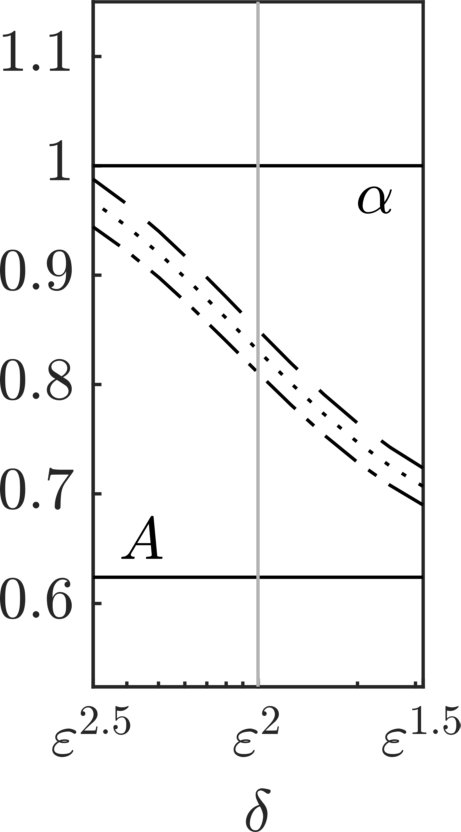}
	\end{tabular}
	\caption{Results for Section \ref{sec:Num_Param1}. On both figures, horizontal lines represent $\alpha$ and $A$, the drift coefficients of the unhomogenized and homogenized equations, and the grey vertical line represents the lower bound for the validity of Theorem \ref{thm:mainTheorem_zeta}. The curved lines (dashed, dotted and dash-dotted) represent on figure (a) the values of $\widehat A_k(X^\epl, T)$ for $\epl = \{0.1, 0.05, 0.025\}$, respectively, computed with $T = 10^3$. On figure (b), they correspond to the values of $\widehat A_k(X^\epl, T)$ at $T = \{100, 300, 1000\}$, respectively, computed with $\epl = 0.05$. We plot next to both figures (a) and (b) a zoom on a neighbourhood of $\epl^2$ to show the transition between the two regimes highlighted by the theoretical results. Note that the $\delta$-axis is in logarithmic scale and is normalized with respect to $\epl$.}
	\label{fig:TheoremVerification}
\end{figure}

We first demonstrate numerically the validity of Theorem \ref{thm:mainTheorem}, Theorem \ref{thm:mainTheorem_zeta} and Theorem \ref{thm:mainTheorem_zetaAlpha}, i.e., the unbiasedness of $\widehat A_k(X^\epl, T)$ for $\delta = \epl^\zeta$ with $\zeta \in [0, 2)$ and biasedness for $\zeta > 2$. Let us recall that for $\zeta = 0$ the analysis and the theoretical result are fundamentally different than for $\zeta \in (0, 2)$. We consider $\epl \in \{0.1, 0.05, 0.025\}$, the diffusion coefficient $\sigma = 1$ and generate data $X^\epl_t$ for $0 \leq t \leq T$ with $T = 10^3$. Then we filter the data by choosing $\delta = \epl^\zeta$, and $\zeta = 0, 0.1, 0.2,\ldots, 3$, and compute $\widehat A_k(X^\epl, T)$. Results are displayed in Figure \ref{fig:TheoremVerification}, and show that for $\zeta > 2$, i.e., $\delta = o(\epl^2)$, the estimator tends to the drift coefficient $\alpha$ of the unhomogenized equation. Conversely, as predicted by the theory, for $\zeta \in [0, 2)$ the estimator tends to $A$, the drift coefficient of the homogenized equation. Therefore, the point $\delta = \epl^2$ acts asymptotically as a switch between two completely different regimes, which is theoretically sharp in the limit for $T \to \infty$ and $\epl \to 0$. Let us remark that the results displayed in Figure \ref{fig:TheoremVerification}.(a) demonstrate that the transition occurs more rapidly for the smallest values of $\epl$. Moreover, in Figure \ref{fig:TheoremVerification}.(b), one can see how with bigger final times $T$ the estimator is closer both to $A$ when $\zeta \in [0, 2]$ and to $\alpha$ when $\zeta > 2$. Still, we observe that in finite computations the switch between $A$ and $\alpha$ is smoother than what we expect from the theory, which suggests to fix, if possible, $\delta = 1$.

\subsubsection{Comparison with Subsampling}\label{sec:Num_Param2}

\begin{figure}[t]
	\centering
	\begin{tabular}{m{1.2cm}m{4cm}m{4cm}m{4cm}}		
		\vspace{-1.2cm}$\sigma=0.5$ & \includegraphics[]{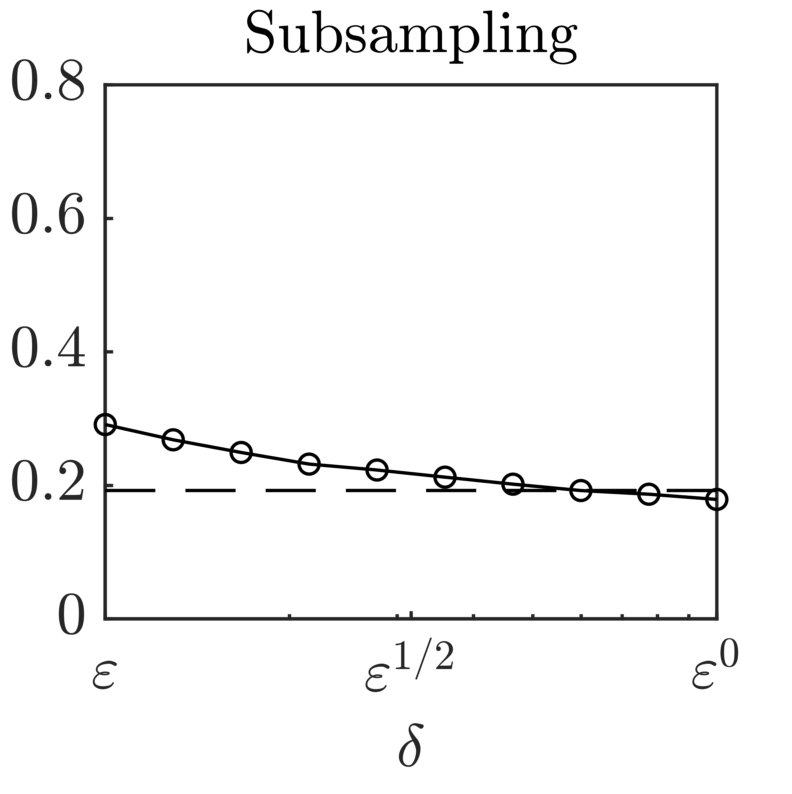} & \includegraphics[]{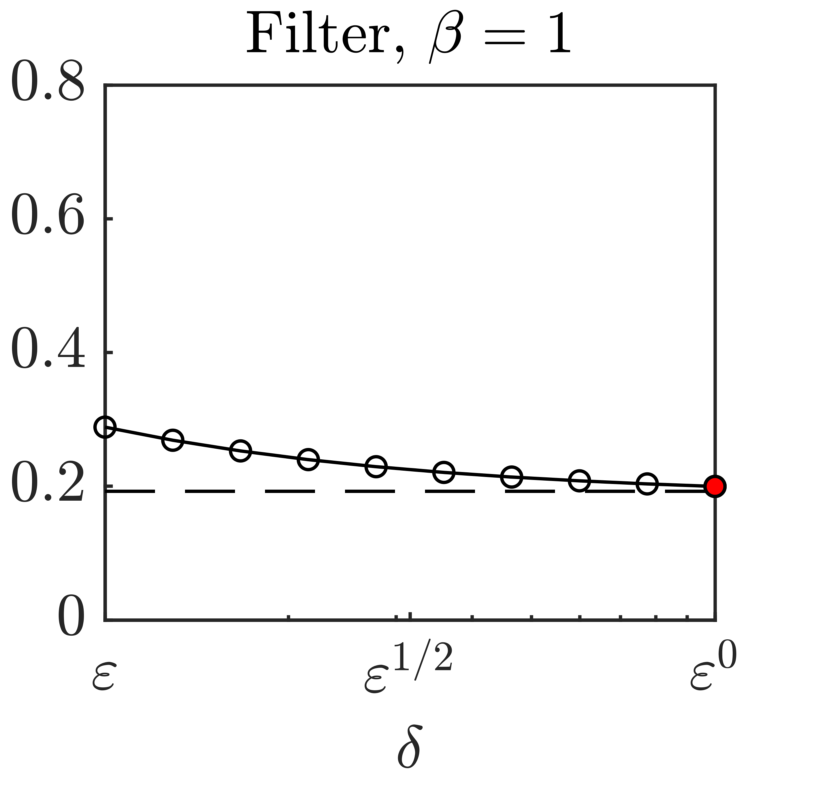}  & \includegraphics[]{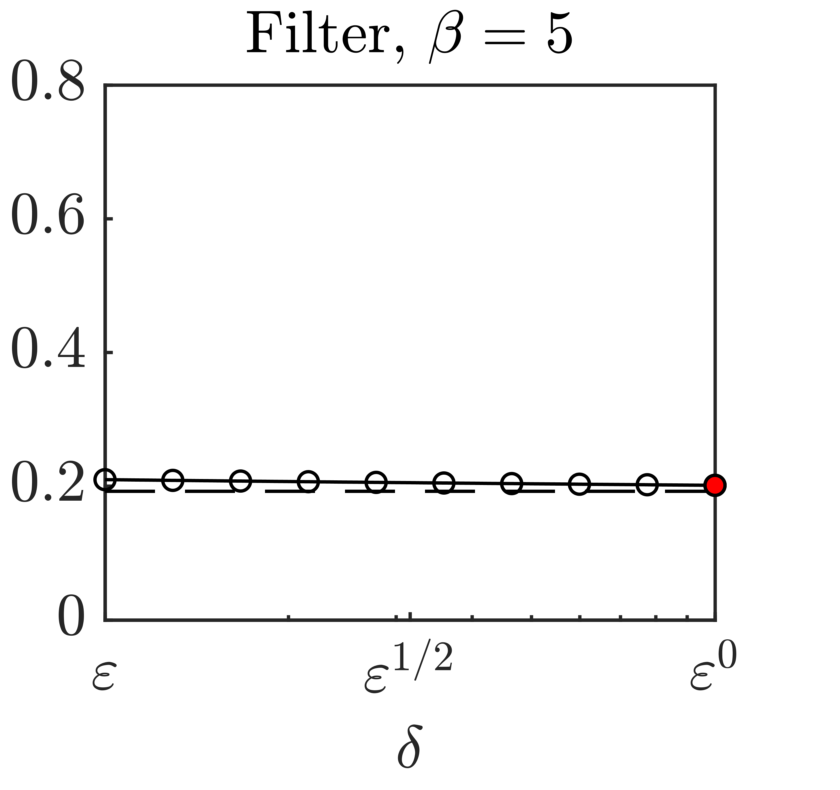} \\ 
		\addlinespace[-1em]
		\vspace{-1.2cm}$\sigma=0.7$ & \includegraphics[]{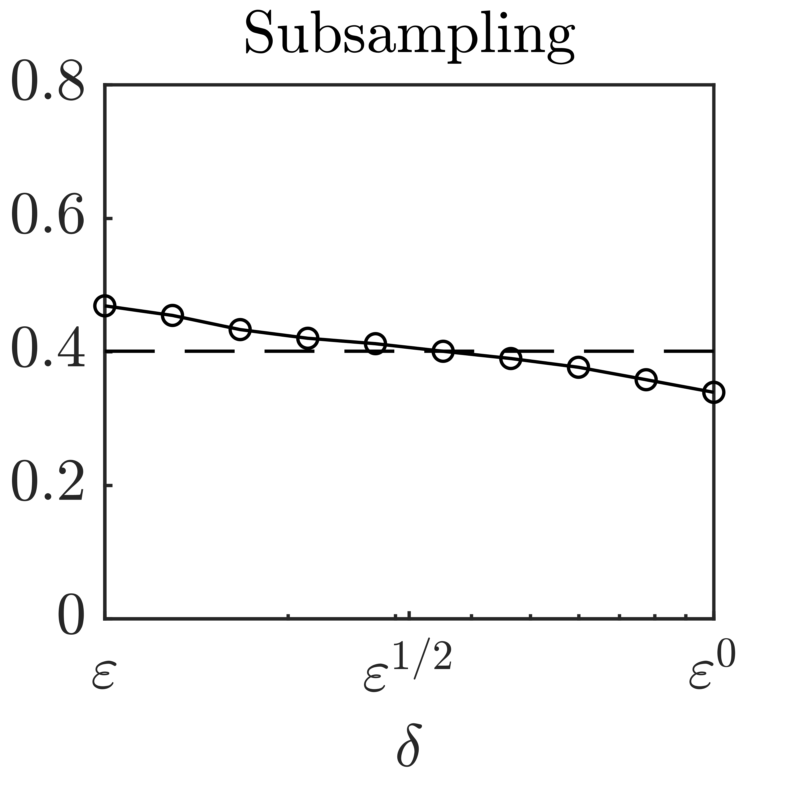} & \includegraphics[]{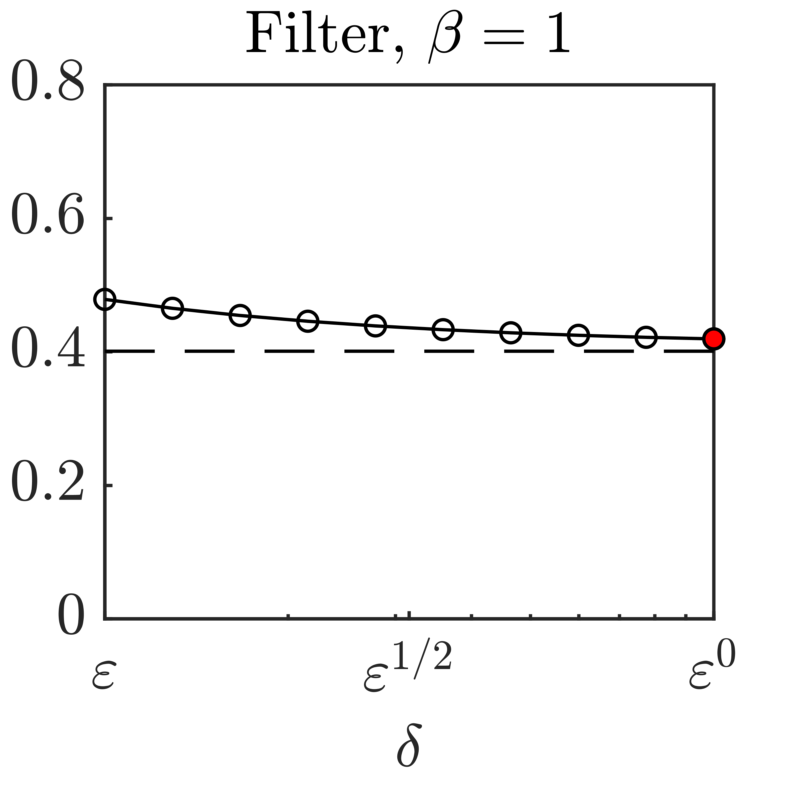}  & \includegraphics[]{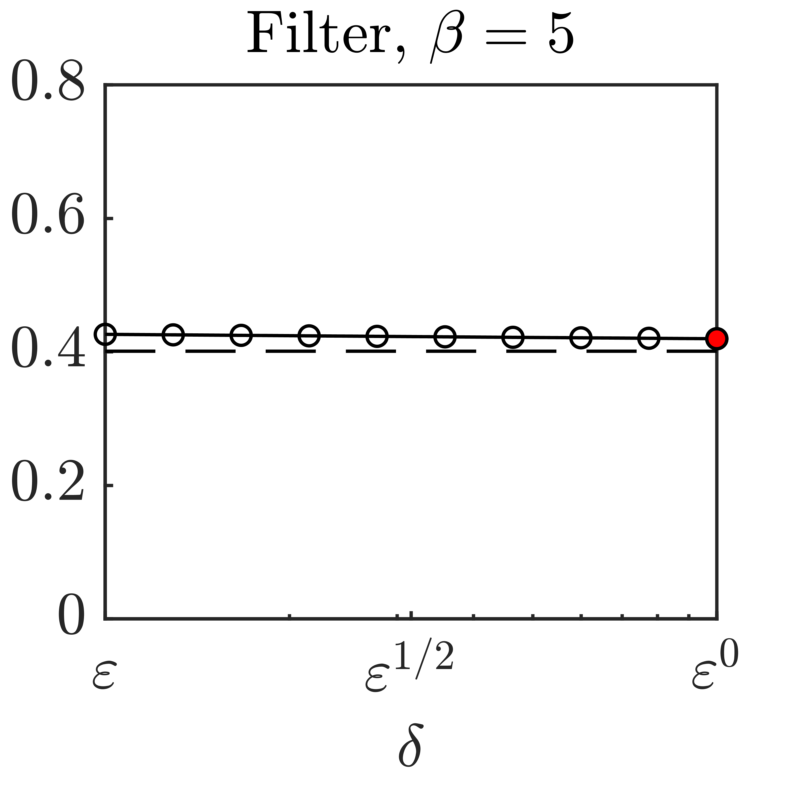} \\ 
		\addlinespace[-1em]
		\vspace{-1.2cm}$\sigma=1.0$ & \includegraphics[]{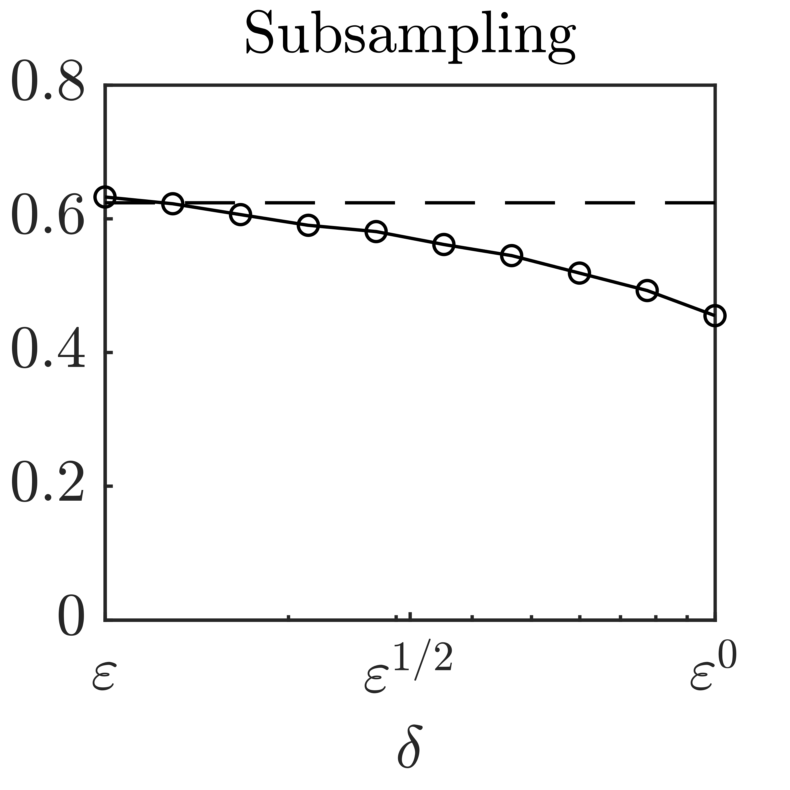} & \includegraphics[]{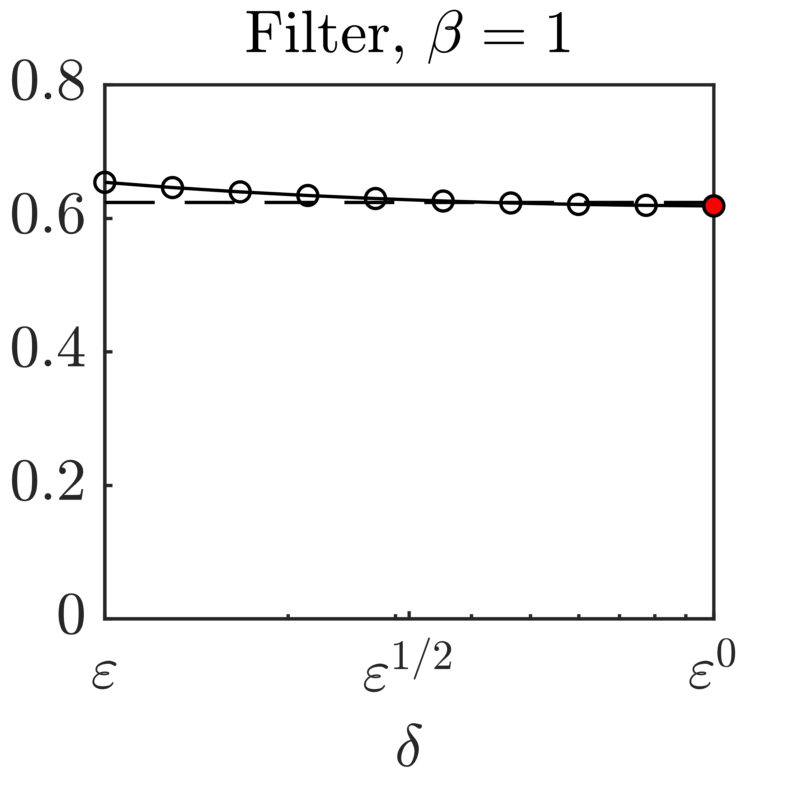}  & \includegraphics[]{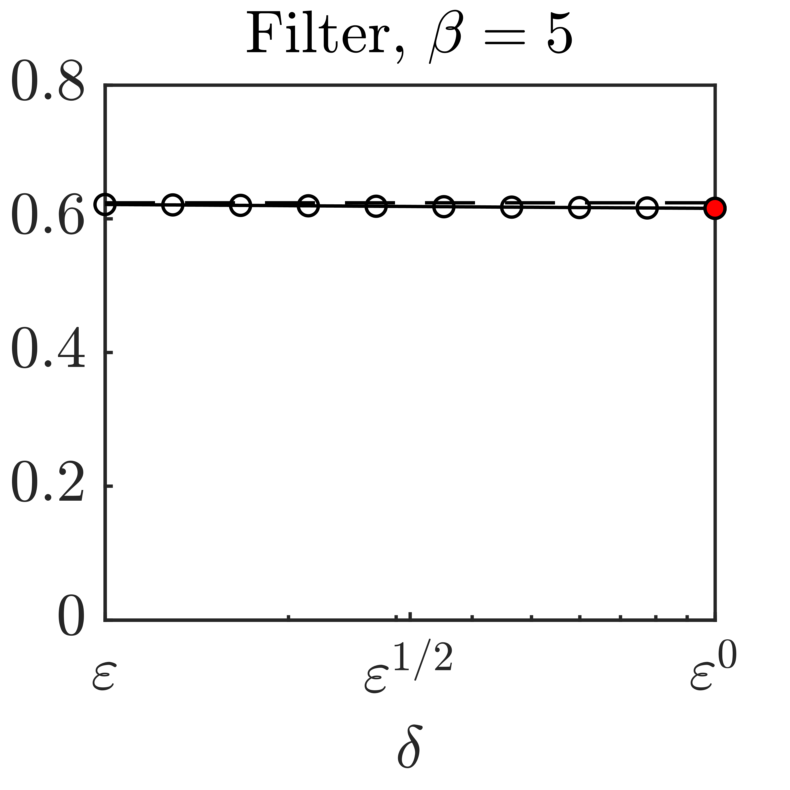}
	\end{tabular}
	\caption{Results for Section \ref{sec:Num_Param2}. The case of $\delta = 1$ is highlighted as a solid dot for the filtered data technique, as the analysis and theoretical result is different in this case. The three rows correspond to $\sigma = 0.5, 0.7, 1.0$ from top to bottom, and the dashed line corresponds to the true value of $A$.}
	\label{fig:OU}
\end{figure}

We now compare the results given by the filtered data technique with the results given by subsampling the data, i.e., the difference between the estimators $\widehat A_k(X^\epl, T)$ and $\widehat A_\delta(X^\epl, T)$. We fix the multiscale parameter $\epl = 0.1$ and generate data for $0 \leq t \leq T$ with $T = 10^3$. We choose $\delta = \epl^{\zeta}$ and vary $\zeta \in [0, 1]$, where $\delta$ is the filtering and the subsampling width, respectively. Moreover, for the filtered data approach we consider both $\beta = 1$ and $\beta = 5$. We report in Figure \ref{fig:OU} the experimental results. Let us remark that:
\begin{enumerate}
	\item for $\sigma = 0.5$ the results given by subsampling and by the filter with $\beta = 1$ are similar, while for higher values of $\sigma$ the filtered data approach seems better than subsampling;
	\item in general, choosing a higher value of $\beta$ seems beneficial for the quality of the estimator;
	\item the dependence on $\delta$ of numerical results given by the filter seems relevant only in case $\beta = 1$ and for small values of $\sigma$. For $\beta = 1$ and higher values of $\sigma$, the estimator is stable with respect to this parameter. This can be observed for a higher value of $\beta$ but we have no theoretical guarantee in this case.
\end{enumerate}

\subsubsection{The Influence of $\beta$}\label{sec:Num_Param3}
We finally test the variability of the estimator with respect to $\beta$ in \eqref{eq:filter}. We consider $\delta = \epl$, which corresponds to $\zeta = 1$ and seems to be the worst-case scenario for the filter, at least for $\beta = 1$. We consider again $\sigma = 0.5, 0.7, 1$ and vary $\beta = 1, 2, \ldots, 10$. Results, given in Figure \ref{fig:OUBeta}, show empirically that the estimator stabilizes fast with respect to $\beta$. Nevertheless, there is no theoretical guarantee supporting this empirical observation.

\begin{figure}[t]
	\centering
	\begin{tabular}{ccc}
		\includegraphics[]{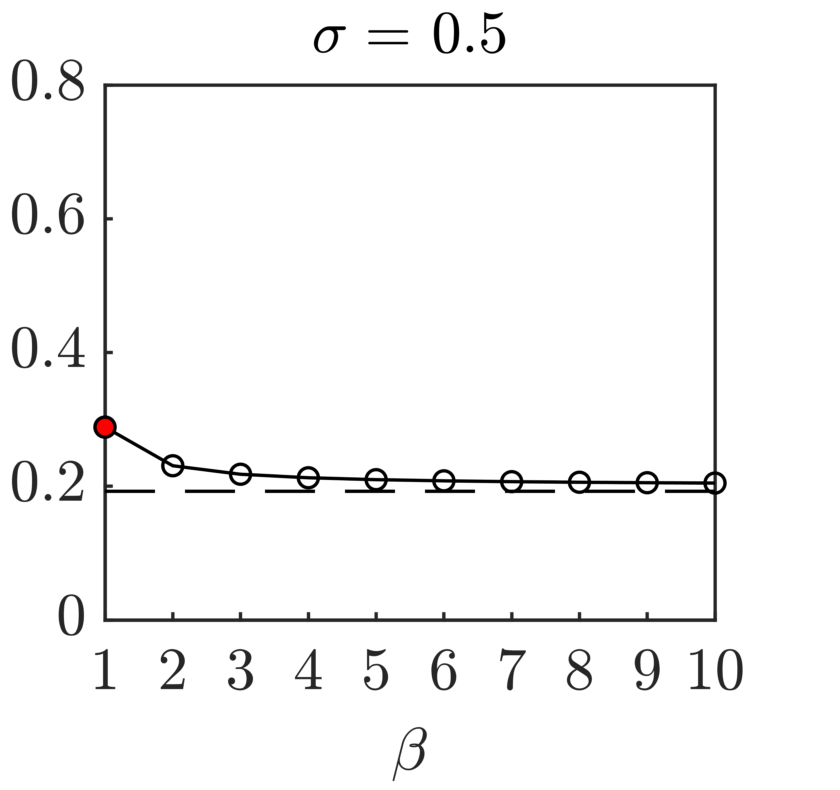} & \includegraphics[]{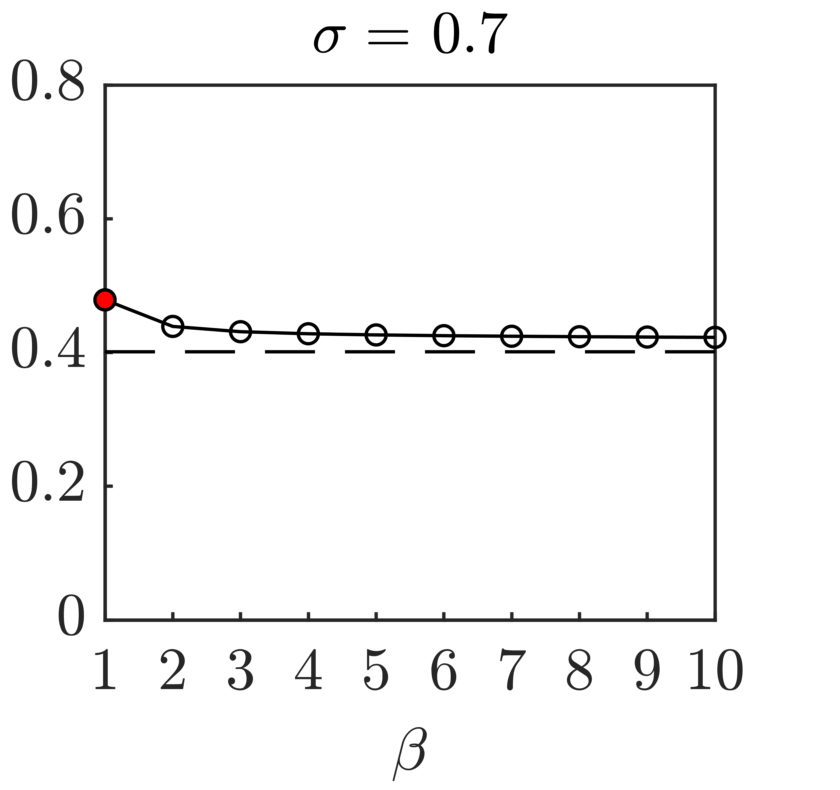}  & \includegraphics[]{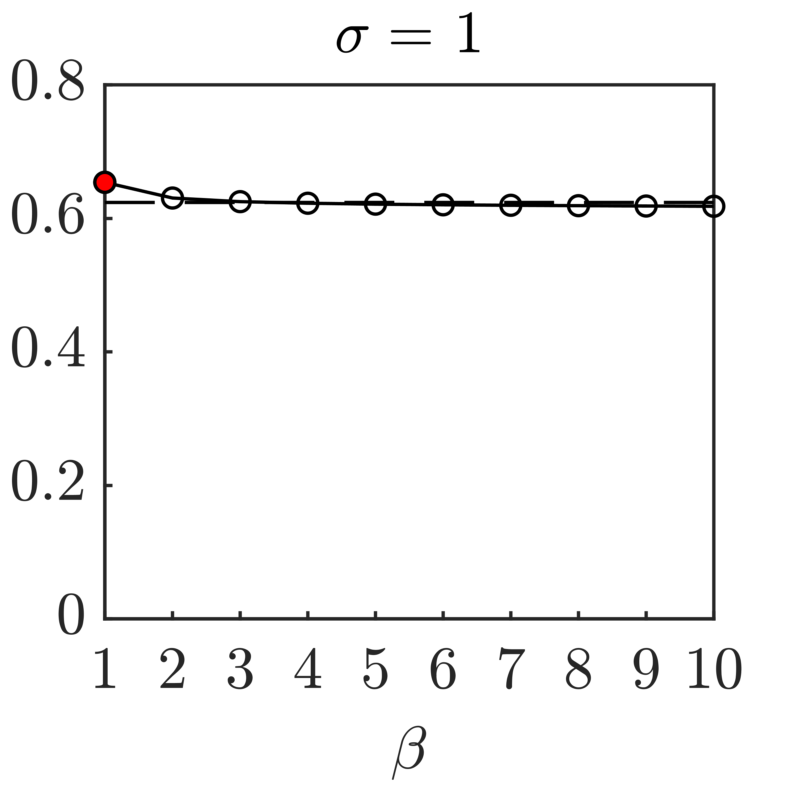} \\
	\end{tabular}
	\caption{Results for the estimator based on filter data with respect to the parameter $\beta$ (Section \ref{sec:Num_Param3}). The result for $\beta=1$, for which there are theoretical guarantees given by Theorem \ref{thm:mainTheorem_zeta}, is highlighted as a solid dot. From left to right we consider different values of $\sigma$, and the dashed line corresponds to the true value of $A$.}
	\label{fig:OUBeta}
\end{figure}

\begin{figure}[t]
	\centering
	\begin{tabular}{cc}
		\includegraphics[]{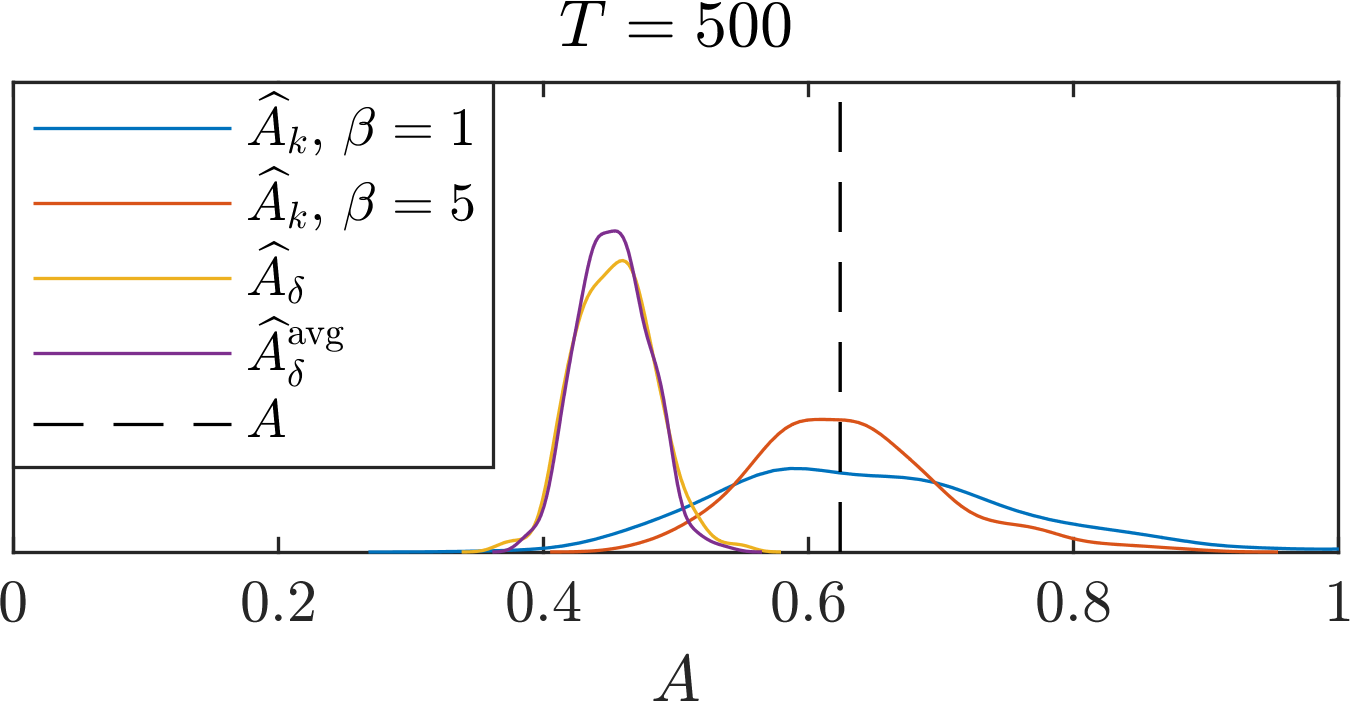} & \includegraphics[]{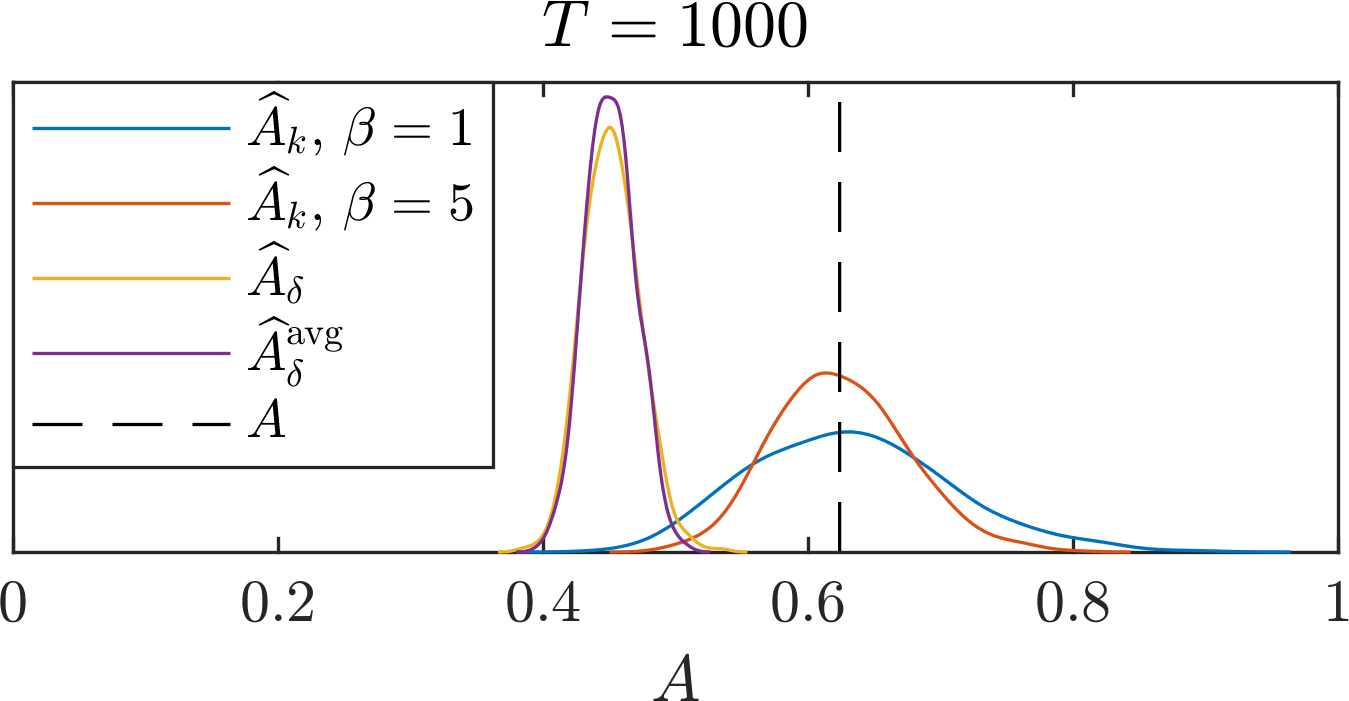}
	\end{tabular}
	\caption{Numerical results for \cref{sec:Num_Var}. Comparison between the density of the estimator of the drift based on filtered data with $\beta = \{1,5\}$, the estimator based on subsampling and the estimator based on shift-subsampling and averaging of \eqref{eq:SubShift}. On the left and on the right, the final time is $T = \{500, 1000\}$, respectively.}
	\label{fig:Variance}
\end{figure}

\subsection{Variance of the Estimators}\label{sec:Num_Var}

We now compare the estimators $\widehat A_k$ based on filtered data and $\widehat A_\delta$ based on subsampling in terms of variance. We consider for this experiment the SDE \eqref{eq:SDE_MS} with $N = 1$, the bistable potential $V(x) = x^4/4 - x^2/2$, the multiscale drift coefficient $\alpha = 1$, the diffusion coefficient $\sigma = 1$ and with $\epl = 0.1$. We then let $X^\epl = (X_t, 0\leq t\leq T)$ be the solution of \eqref{eq:SDE_MS} and generate $N_{\mathrm{s}} = 500$ i.i.d. samples of $X^\epl$. We then compute the estimators $\widehat A_k$ and $\widehat A_\delta$ on each of the realizations of $X^\epl$, thus obtaining $N_{\mathrm{s}}$ replicas $\{\widehat A_k^{(i)}\}_{i=1}^{N_{\mathrm{s}}}$ and $\{\widehat A_\delta^{(i)}\}_{i=1}^{N_{\mathrm{s}}}$. For the estimator $\widehat A_k$, we consider the kernel \eqref{eq:filter} with $\beta = \{1,5\}$ and with $\delta = 1$. For the estimator $\widehat A_\delta$, we employ the subsampling width $\delta = \epl^{2/3}$, which is heuristically optimal following \cite{PaS07}. It could be argued that another estimator based on subsampling and shifting could be employed to reduce the variance. In particular, we let $\tau > 0$ be the time step at which the data is observed. Indeed, in practice we work with high-frequency discrete data, and observe $X^\epl \defeq (X^\epl_0, X^\epl_\tau, \ldots, X^\epl_{n\tau})$, with $n\tau = T$. We assume for simplicity that the subsampling width $\delta$ is a multiple of $\tau$ and compute for all $k = 0, 1, \ldots, \delta/\tau-1$ 
\begin{equation}\label{eq:SubShift_Single}
	\widehat A_{\delta,k}(X^\epl,T) = - \frac{\sum_{j=0}^{n-1} V'(X^\epl_{j\delta + k})
		(X^\epl_{(j+1)\delta + k}-X^\epl_{j\delta + k}) }{\delta \sum_{j=0}^{n-1}
		V'(X^\epl_{j\delta + k})^2},
\end{equation}
i.e. the subsampling estimator obtained by shifting the origin by $k\tau$. We then average over the index $k$ and obtain the new estimator
\begin{equation}\label{eq:SubShift}
	\widehat A_{\delta}^{\mathrm{avg}}(X^\epl,T) = \frac{\tau}{\delta} \sum_{k=0}^{\delta/\tau - 1}\widehat A_{\delta,k}(X^\epl,T).
\end{equation}
We include this estimator in the numerical study for completeness, and compute $N_{\mathrm{s}}$ replicas of $\widehat A_{\delta}^{\mathrm{avg}}$ on all the realizations of $X^\epl$. Results, given in Figure \ref{fig:Variance} for the final times $T = \{500, 1000\}$, show that our novel approach does not outperform subsampling in terms of variance, but clearly does in terms of bias. Moreover, we notice numerically that the shifted-averaged estimator $\widehat A_\delta^{\mathrm{avg}}$ does not reduce sensibly the variance in this case with respect to $\widehat A_\delta$. In fact, this is only partly surprising, since the estimators $\widehat A_{\delta,k}$ of \eqref{eq:SubShift_Single} are highly correlated. Finally, we notice that the filtering estimator $\widehat A_k$ with $\beta = 5$ has a lower variance with respect to the same estimator with $\beta = 1$. This confirms that choosing a higher value of $\beta$ improves the estimation of the effective drift coefficient.

\subsection{Multidimensional Drift Coefficient}\label{sec:Num_Multi}

\begin{figure}[t]
	\centering
	\begin{tabular}{ccc}
		\includegraphics[]{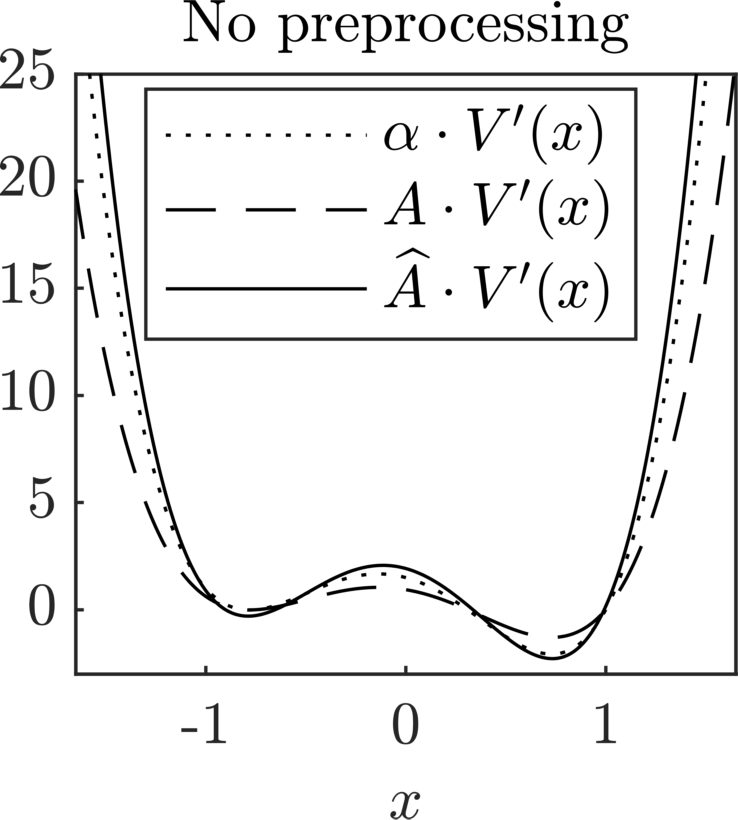} & \includegraphics[]{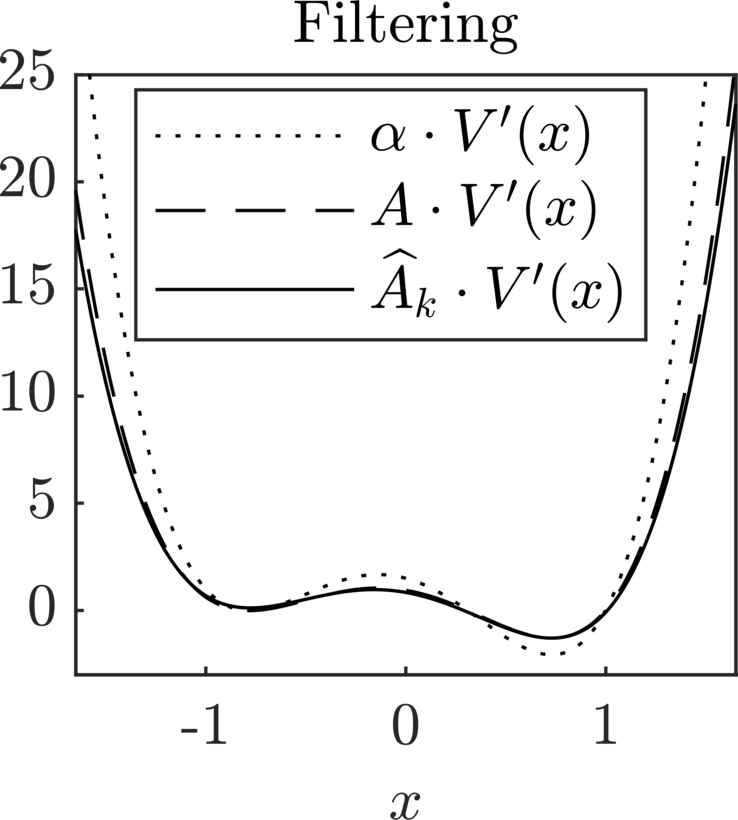}  & \includegraphics[]{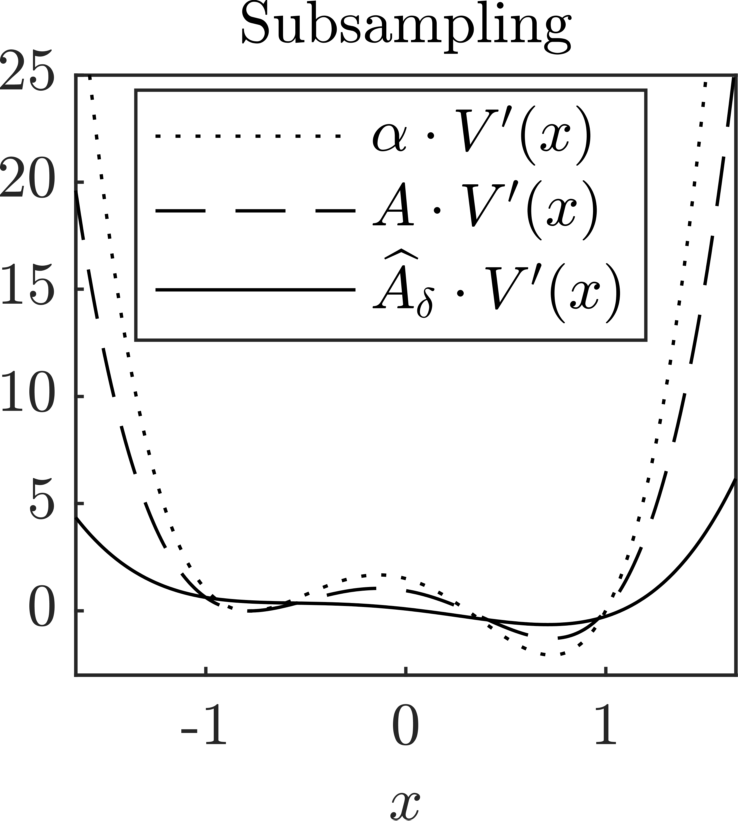}
	\end{tabular}

	\begin{tabular}{cccccc}
		\toprule
		Coefficient & Multiscale & Homogenized & No preprocessing  & Filtering  		  & Subsampling  			 \\ 
		& $\alpha$ & $A$   & $\widehat A$ & $\widehat A_k$ & $\widehat A_\delta$ \\
		\midrule
		$1$ &-1   & -0.62 & -0.92 & -0.70 & -0.59\\
		$2$ &-0.5 & -0.31 & -0.70 & -0.27 & 0.05 \\
		$3$ & 0.5 & 0.31  & 0.55  & 0.31  & 0.14 \\
		$4$ & 1   & 0.62  & 1.22  & 0.57  & 0.13 \\
		\bottomrule
	\end{tabular}
	\caption{Results for Section \ref{sec:Num_Multi}. In the figure, from left to right the potential function estimated with the data itself, the filter, subsampled data. In the table, numerical results for the single components of the true and estimated drift coefficients.}
	\label{fig:KLStyle}
\end{figure}

Let us consider the Chebyshev polynomials of the first kind, i.e., the polynomials $T_i\colon \R \to \R$, $i=0, 1, \ldots$, defined by the recurrence relation
\begin{equation}
T_0(x) = 1, \quad T_1(x) = x, \quad T_{i+1}(x) = 2xT_i(x) - T_{i-1}(x).
\end{equation}
We consider the potential function $V(x)$ as in \eqref{eq:Potential} with
\begin{equation}
V_i(x) = T_i(x), \quad i =1, \ldots, 4,
\end{equation}
thus considering the semi-parametric framework of Remark \ref{rem:SemiParam}. This potential function satisfies Assumption \ref{as:regularity} whenever $N$ is even and if the leading coefficient $\alpha_N$ is positive. We set $N = 4$ and the drift coefficient $\alpha = (-1, -1/2, 1/2, 1)$. With this drift coefficient, the potential function is of the bistable kind. Moreover, we set $\epl = 0.05$, the diffusion coefficient $\sigma = 1$, the fast potential $p(y) = \cos(y)$ and simulate a trajectory of $X^\epl$ for $0 \leq t \leq T$ with $T = 10^3$ employing the Euler--Maruyama method with time step $\Delta t = \epl^3$. We estimate the drift coefficient $A \in \R^4$ with the estimators:
\begin{enumerate}
	\item $\widehat A(X^\epl, T)$ based on the data $X^\epl$ itself;
	\item $\widehat A_\delta(X^\epl, T)$ based on subsampled data with subsampling parameter $\delta = \epl^{2/3}$;
	\item $\widehat A_k(X^\epl, T)$ based on filtered data $Z^\epl$ computed with $\beta = 1$ and $\delta = 1$.
\end{enumerate}
In particular, we pick this specific value of $\delta$ for the subsampling following the optimality criterion given in \cite{PaS07}. Results, given in Figure \ref{fig:KLStyle}, show that the filter-based estimation captures well the homogenized potential as well as the coefficient $A$. Moreover, it is possible to remark the negative result given by Theorem \ref{thm:Bias} holds in practice, i.e., with no pre-processing the estimator $\widehat A(X^\epl, T)$ tends to the drift coefficient $\alpha$ of the unhomogenized equation. Finally, we can observe that the subsampling-based estimator fails to capture the homogenized coefficients. Indeed, the estimator strongly depends on the sampling rate and on the diffusion coefficient, as shown in the numerical experiments of \cite{PaS07}. Even though the authors suggest the choice of $\delta = \epl^{2/3}$, this is just an heuristic and is not guaranteed to be the optimal value in all cases. In the asymptotic limit of $\epl \to 0$ and $T \to \infty$, any valid choice of the subsampling rate is guaranteed theoretically to work, but not in the pre-asymptotic regime. Our estimator, conversely, seems to perform better with no particular tuning of the parameters even in this multi-dimensional case, which demonstrates the robustness of our novel approach.

\subsection{The Bayesian Approach: Bistable Potential} \label{sec:Num_Bayes}
\begin{figure}[t]
	\centering
	\begin{tabular}{ccc}
		\includegraphics[]{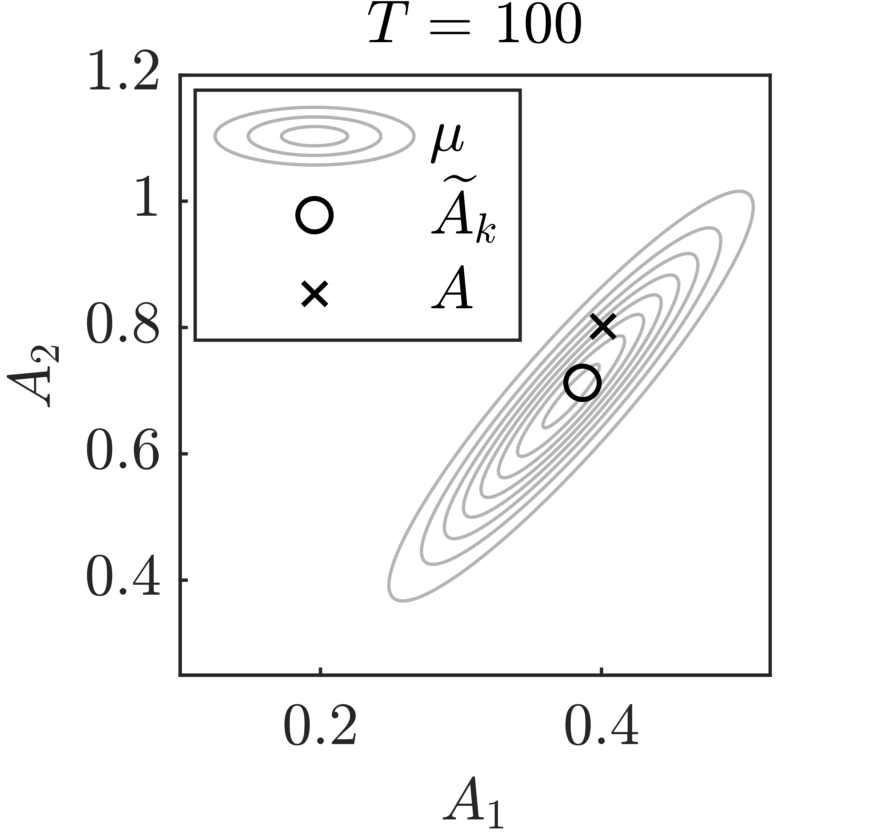} & \includegraphics[]{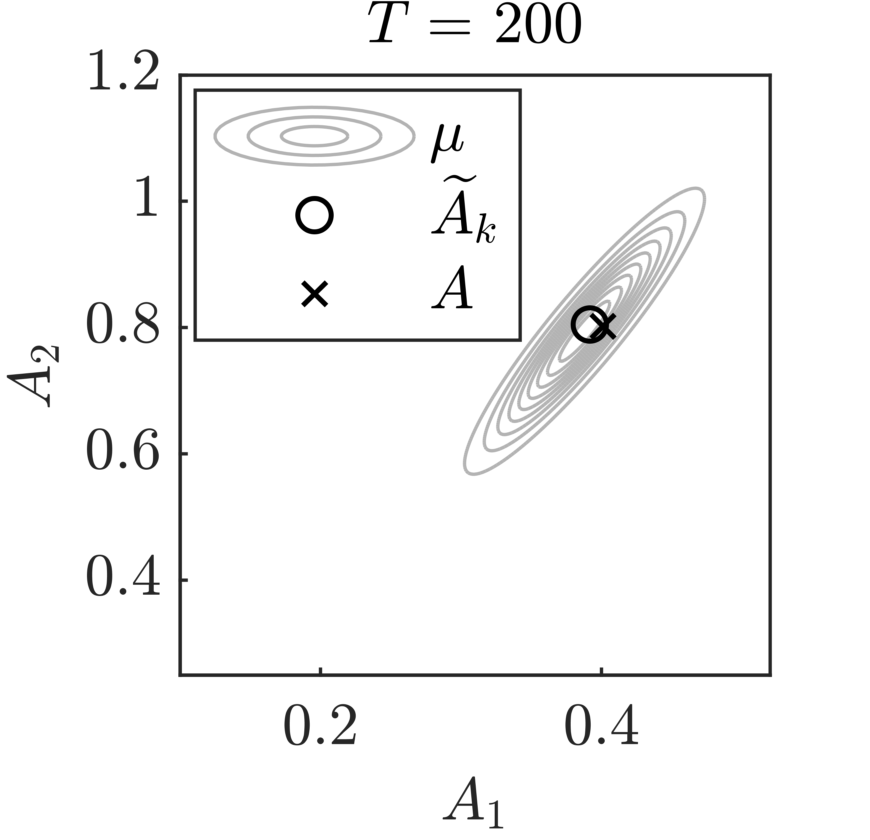}  & \includegraphics[]{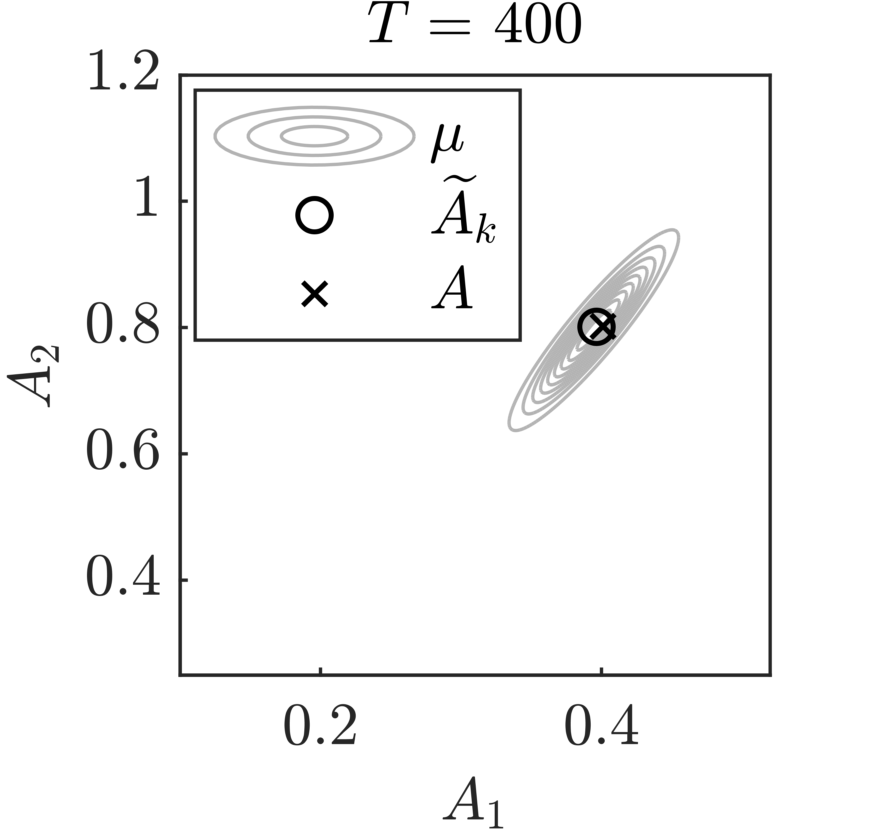} \\
	\end{tabular}
	\caption{Results for Section \ref{sec:Num_Bayes}. Posterior distributions over the parameter $A = (A_1, A_2)^\top$ for the bistable potential obtained with the filtered data approach. The figures refer to final time $T = 100, 200, 400$ from left to right, respectively. The MLE $\widetilde A_k(X^\epl, t)$ is represented with a circle, while the true value $A$ of the drift coefficient of the homogenized equation is represented with a cross.}
	\label{fig:Bayes}
\end{figure}

In this numerical experiment we consider $N = 2$ and the bistable potential, i.e., the function $V$ defined as
\begin{equation}
	V(x) = \begin{pmatrix} \dfrac{x^4}{4} & -\dfrac{x^2}{2} \end{pmatrix}^\top,
\end{equation}
with coefficients $\alpha_1 = 1$ and $\alpha_2 = 2$. We then consider the multiscale equation with $\sigma = 0.7$, the fast potential $p(y) = \cos(y)$ and $\epl = 0.05$, thus simulating a trajectory $X^\epl$. We adopt here a Bayesian approach and compute the posterior distribution $\widetilde \mu_{T, \epl}$ obtained with the filtered data approach introduced in Section \ref{sec:BayesianFilter}. The parameters of the filter are set to $\beta = 1$ and $\delta = \epl$ in \eqref{eq:filter}. Moreover, we choose the non-informative prior $\mu_0 = \mathcal N(0,I)$. Let us remark that in order to compute the posterior covariance the diffusion coefficient $\Sigma$ of the homogenized equation has to be known. In this case, we pre-compute the value of $\Sigma$ via the coefficient $K$ and the theory of homogenization, but notice that $\Sigma$ could be estimated either employing the subsampling technique of \cite{PaS07} or using the estimator $\widehat \Sigma_k$ based on filtered data defined in \eqref{eq:SigmaHat}. In particular, in this case $\Sigma \approx 0.2807$, and we compute numerically
\begin{equation}
	\widehat \Sigma_k(X^\epl, 100) = 0.2901, \quad \widehat \Sigma_k(X^\epl, 200) = 0.2835, \quad \widehat \Sigma_k(X^\epl, 400) = 0.2813,
\end{equation}
so that employing the estimator $\widehat \Sigma_k$ instead of the true value would have negligible effects on the computation of the posterior over the effective drift coefficient. We stop computations at times $T = \{100, 200, 400\}$ in order to observe the shrinkage of the Gaussian posterior towards the MLE $\widetilde A_k(X^\epl, T)$ with respect to time. In Figure \ref{fig:Bayes}, we observe that the posterior does indeed shrink towards the MLE, which in turn gets progressively closer to the true value of the drift coefficient $A$ of the homogenized equation.

\section{Conclusion}

In this work we considered a novel methodology to confront the
problem of model misspecification when homogenized models are fit to multiscale data. Our approach is based on using filtered data for the estimation of the drift of the homogenized diffusion process. We proved asymptotic unbiasedness of estimators drawn from our methodology. Moreover, we found a modified Bayesian approach which guarantees robust uncertainty quantification and posterior contraction, based on the same filtered data approach. Numerical experiments demonstrate how the estimator based on filtered data requires less knowledge of the characteristic time-scales of the multiscale equation with respect to subsampling, and how it can be employed as a black-box tool for parameter estimation on a range of academic examples. We note that in many applications one can only obtain discrete measurements of the diffusion process. Recently, using the filtering approach developed in this paper and martingale estimating functions a new estimator for learning homogenised SDEs from noisy discrete data has been introduced \cite{APZ21}. We believe this work gives way to several further developments. In particular, we believe it would be relevant to 
\begin{enumerate}
	\item analyse the filtered data approach for $\beta > 1$ in \eqref{eq:filter}, which seems to give more robust results in practice,
	\item extend the analysis to the non-parametric framework most likely by means of Bayesian regularization techniques, thus allowing to recover effective drift functions for which a parametric representation does not exist,
	\item consider multiscale models for which the homogenized equation presents multiplicative noise,
	\item test the filtered data methodology against real-world data,
	\item apply similar methodologies to correct faulty behaviour of other methods.
\end{enumerate} 

\subsection*{Acknowledgements} 

AA, AZ and GG are partially supported by the Swiss National Science Foundation, under grant No. 200020\_172710. The work of GAP was partially funded by the EPSRC, grant number EP/P031587/1, and by JPMorgan Chase \& Co. Any views or opinions expressed herein are solely those of the authors listed, and may differ from the views and opinions expressed by JPMorgan Chase \& Co. or its affiliates. This material is not a product of the Research Department of J.P. Morgan Securities LLC. This material does not constitute a solicitation or offer in any jurisdiction. AMS is grateful to NSF (grant DMS 18189770) for financial support.

\begin{appendices}
	
\section{Proofs of Sections \ref{sec:ergodic}}\label{ap:ProofsErgodic}

\begin{proof}[Proof of Lemma \ref{lem:density}] We have to show that the joint process solution to \eqref{eq:systemSDE} is hypo-elliptic. Denoting as $f\colon \R \to \R$ the function 
	\begin{equation}
	f(x) = -\alpha \cdot V'(x) - \frac1\epl p'\left(\frac{x}\epl\right),
	\end{equation}
	the generator of the process $(X^\epl, Z^\epl)^\top$ is given by
	\begin{equation}
	\mathcal L = f \partial_x + \sigma \partial_{xx}^2 + \frac1\delta (x - z)\partial_z \eqdef \mathcal X_0 + \sigma \mathcal X_1^2, 
	\end{equation}
	where 
	\begin{equation}
	\mathcal X_0 = f \partial_x + \frac1\delta (x - z)\partial_z, \quad \mathcal X_1 = \partial_x.
	\end{equation}
	The commutator $[\mathcal X_0, \mathcal X_1]$ applied to a test function $v$ then gives
	\begin{equation}
	\begin{aligned}
	[\mathcal X_0, \mathcal X_1]v &= f \partial_x^2 v + \frac1\delta (x - z) \partial_x \partial_z v  - \partial_x\left(f\partial_x v + \frac1\delta (x - z)\partial_z v\right)\\
	&= -\partial_x f \partial_x v - \frac1\delta \partial_z v.
	\end{aligned}
	\end{equation}
	Consequently, 
	\begin{equation}
	\mathrm{Lie}\left(\mathcal X_1, [\mathcal X_0, \mathcal X_1]\right) = \mathrm{Lie} \left(\partial_x, -\partial_x f \partial_x - \frac1\delta\partial_z\right),
	\end{equation}
	which spans the tangent space of $\R^2$ at $(x, z)$, denoted $T_{x, z}\R^2$. The desired result then follows from Hörmander's theorem (see e.g. \cite[Chapter 6]{Pav14}).
\end{proof}

\begin{proof}[Proof of Lemma \ref{lem:ergodicity}] Lemma \ref{lem:density} guarantees that the Fokker--Planck equation can be written directly from the system \eqref{eq:systemSDE}. For geometric ergodicity, let 
	\begin{equation}
	\mathcal S(x,z) \defeq \begin{pmatrix} -\alpha \cdot V'(x) - \frac1\epl p'(\frac{x}{\epl}) \\ \frac1\delta (x - z) \end{pmatrix} \cdot \begin{pmatrix} x \\ z \end{pmatrix} =  -\left(\alpha \cdot V'(x) + \frac1\epl p'\left(\frac{x}{\epl}\right)\right)x + \frac1\delta(xz-z^2).
	\end{equation}
	Due to Assumption \ref{as:regularity}\ref{as:regularity_diss}, Remark \ref{rem:regularity_diss} and Young's inequality, we then have for all $\gamma > 0$
	\begin{equation}
	\mathcal S(x,z) \leq a + \left(\frac{1}{2\gamma\delta} - b\right)x^2 + \frac1\delta\left(\frac{\gamma}{2} - 1\right)z^2.
	\end{equation}
	We choose $\gamma = \gamma^* \defeq 1 - b\delta + \sqrt{1 + (1 - b\delta)^2} > 0$ so that
	\begin{equation}
	C(\gamma^*) \defeq -\frac{1}{2\gamma^*\delta} + b = -\frac1\delta\left(\frac{\gamma^*}{2} - 1\right),
	\end{equation}
	and we notice that $C(\gamma^*) > 0$ if $\delta > 1/(4b)$. In this case, we have
	\begin{equation}
	\mathcal S(x, z) \leq a - C(\gamma^*) \norm{\begin{pmatrix} x & z \end{pmatrix}^\top}^2,
	\end{equation}
	and problem \eqref{eq:systemSDE} is dissipative. It remains to prove the irreducibility condition \cite[Condition 4.3]{MSH02}. We remark that the system \eqref{eq:systemSDE} fits the framework of the example the end of \cite[Page 199]{MSH02}, and therefore \cite[Condition 4.3]{MSH02} is satisfied. The result then follows from \cite[Theorem 4.4]{MSH02}.
\end{proof}

\begin{proof}[Proof of Lemma \ref{lem:FPMarginal}] Integrating equation \eqref{eq:FPsystem} with respect to $z$ we obtain the stationary Fokker--Planck equation for the process $X^\epl$, i.e.
	\begin{equation}
	\label{FPx}
	\sigma (\phi^\epl)''(x) + \frac{\d}{\d x} \left( \left ( \alpha \cdot V'(x) + \frac{1}{\epl} p' \left ( \frac{x}\epl \right ) \right ) \phi^\epl(x)\right) = 0,
	\end{equation}
	whose solution is given by
	\begin{equation}
	\phi^\epl(x) = \frac{1}{C_{\phi^\epl}} \exp\left(- \frac{1}{\sigma} \alpha \cdot V(x) - \frac{1}{\sigma} p \left ( \frac{x}\epl \right )\right),
	\end{equation}
	and which proves \eqref{eq:marginalX}. In view of \eqref{eq:densityDecomposition} and \eqref{FPx}, equation \eqref{eq:FPsystem} can be rewritten as
	\begin{equation}\label{eq:psiEqualityPDE}
	\partial_x \left( \sigma \phi^\epl \psi^\epl \partial_x R^\epl \right) + \partial_z \left( \frac1\delta(z-x) \phi^\epl \psi^\epl R^\epl \right) = 0.
	\end{equation}
	We now multiply the equation above by a continuous differentiable function $f \colon \R^2 \to \R^N$, $f = f(x,z)$, and integrate with respect to $x$ and $z$. Then an integration by parts yields
	\begin{equation}
	\sigma \int_\R \int_\R \partial_x f(x,z) \phi^\epl(x) \psi^\epl(z) \partial_x R^\epl(x,z) \dd x \dd z = \frac1\delta \int_\R \int_\R \partial_z f(x,z) (x-z) \phi^\epl(x) \psi^\epl(z) R^\epl(x,z) \dd x \dd z,
	\end{equation}
	which implies the following identity in $\R^N$
	\begin{equation}
	\sigma \delta \int_\R \int_\R \partial_x f(x,z) \phi^\epl(x) \psi^\epl(z) \partial_x R^\epl(x,z) \dd x \dd z = \E^{\rho^\epl} \left[ \partial_z f(X^\epl,Z^\epl) (X^\epl-Z^\epl) \right].
	\end{equation}
	Finally, choosing 
	\begin{equation}
	f(x,z) = (x - z) V'(z) + V(z),
	\end{equation}
	we obtain the desired result.
\end{proof}

\section{Proof of Proposition \ref{prop:zeta}}\label{ap:ProofDistance}

\subsection{Preliminary estimates}
In order to prove the characterization provided by Proposition \ref{prop:zeta}, we need to prove two additional results on the filter. First, we prove a Jensen-like inequality for the kernel of the filter.

\begin{lemma}\label{lem:FilterJensen} Let $\delta > 0$ and $k(r)$ be defined as
	\begin{equation}
		k(r) = \frac1\delta e^{-r/\delta}.
	\end{equation}
	Then, for any $t > 0$, $p \geq 1$ and any function $g\in \mathcal C^0([0, t])$ it holds 
		\begin{equation}
			\abs{\int_0^t k(t-s) g(s) \dd s}^p \leq \int_0^t k(t-s) \abs{g(s)}^p \dd s.
		\end{equation}
\end{lemma}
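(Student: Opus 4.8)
The plan is to recognize this as Jensen's inequality in disguise, with the one caveat that the kernel does not integrate to one over the finite window $[0,t]$. The only features of $k$ that matter are its nonnegativity and the value of its total mass on $[0,t]$. A direct computation gives
\[
\int_0^t k(t-s)\dd s = \int_0^t \frac{1}{\delta}e^{-(t-s)/\delta}\dd s = 1 - e^{-t/\delta} \eqdef m,
\]
and the crucial observation is that $m \in (0,1]$: the mass is at most one because the exponential kernel is truncated at the lower end of its support, rather than integrated over all of $[0,\infty)$.

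For $p = 1$ the claim is simply the triangle inequality for integrals, using $k \geq 0$. For $p > 1$, let $q$ be the Hölder conjugate of $p$, so that $1/p + 1/q = 1$. First I would split the kernel as $k(t-s) = k(t-s)^{1/p}\,k(t-s)^{1/q}$ and apply Hölder's inequality to obtain
\[
\abs{\int_0^t k(t-s) g(s)\dd s} \leq \left(\int_0^t k(t-s)\abs{g(s)}^p \dd s\right)^{1/p}\left(\int_0^t k(t-s)\dd s\right)^{1/q}.
\]
Raising both sides to the power $p$ and using the identity $p/q = p - 1$ then gives
\[
\abs{\int_0^t k(t-s) g(s)\dd s}^p \leq m^{p-1}\int_0^t k(t-s)\abs{g(s)}^p \dd s.
\]

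It then remains only to dispose of the factor $m^{p-1}$, and this is the single point deserving care — precisely what distinguishes the statement from a textbook application of Jensen. Since $m = 1 - e^{-t/\delta} \leq 1$ and $p - 1 \geq 0$, the monotonicity of $x \mapsto x^{p-1}$ on $[0,1]$ yields $m^{p-1} \leq 1$, and the stated inequality follows. Had the mass exceeded one the argument would break down, but the truncation of the exponential guarantees $m \leq 1$. An equivalent route, which I would mention as a remark, is to normalize $k/m$ to a genuine probability density on $[0,t]$ and invoke Jensen's inequality directly for the convex function $x \mapsto \abs{x}^p$, then undo the normalization; this reproduces exactly the same factor $m^{p-1}$ and the same conclusion.
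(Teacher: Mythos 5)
Your proof is correct, and in substance it coincides with the paper's. The paper normalizes the truncated kernel into a probability measure $\kappa_t(\d s) = k(t-s)\,\d s/(1-e^{-t/\delta})$ on $[0,t]$ and applies Jensen's inequality for the convex function $x \mapsto \abs{x}^p$, obtaining the factor $(1-e^{-t/\delta})^{p-1}$ and then discarding it because $0 < 1-e^{-t/\delta} < 1$ and $p \geq 1$ --- which is precisely the ``equivalent route'' you sketch in your closing remark, and precisely the delicate point you correctly isolate (the truncated mass is at most one; were it larger than one, the factor could not be dropped). Your primary derivation reaches the same intermediate bound $m^{p-1}\int_0^t k(t-s)\abs{g(s)}^p \dd s$ via the splitting $k = k^{1/p}k^{1/q}$ and H\"older's inequality instead of via normalization and Jensen; the two mechanisms are interchangeable here and yield identical constants, so the difference is cosmetic rather than structural. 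If anything, your version is slightly more self-contained (no normalization step, and the $p=1$ case handled separately as the triangle inequality), while the paper's version handles all $p \geq 1$ uniformly in one line.
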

\begin{proof} Let us first note that
	\begin{equation}
		\int_0^t k(t-s) \dd s = 1 - e^{-t/\delta}.
	\end{equation}
	Therefore, the measure $\kappa_t(\d s)$ on $[0, t]$ defined as
	\begin{equation}
		\kappa_t(\d s) \defeq \frac{k(t-s)}{1 - e^{-t/\delta}} \dd s,
	\end{equation}
	is a probability measure. An application of Jensen's inequality therefore yields
	\begin{equation}
	\begin{aligned}
		\abs{\int_0^t k(t-s) g(s) \dd s}^p &\leq (1 - e^{-t/\delta})^p \int_0^t \abs{g(s)}^p \kappa_t(\d s) \\
		&= (1 - e^{-t/\delta})^{p-1} \int_0^t k(t-s) \abs{g(s)}^p \dd s.
	\end{aligned}
	\end{equation}
	Finally since $0 < (1 - e^{-t/\delta})< 1$ and $p \geq 1$, this yields the desired result.
\end{proof}

The following lemma characterizes the action of the filter when it is applied to polynomials in $(t-s)$.
\begin{lemma}\label{lem:FilterPoly} With the notation of Lemma \ref{lem:FilterJensen}, it holds for all $p \geq 0$
	\begin{equation}
		\int_0^t k(t-s)(t-s)^p \dd s \leq C \delta^p,
	\end{equation}
	where $C > 0$ is a positive constant independent of $\delta$.
\end{lemma}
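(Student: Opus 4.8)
The plan is to reduce the integral to a standard Gamma-function computation by a change of variables and a rescaling. First I would perform the substitution $r = t-s$, which turns the integral into $\int_0^t k(r)\, r^p \dd r = \frac1\delta\int_0^t e^{-r/\delta} r^p \dd r$. Since the integrand is nonnegative, extending the upper limit of integration from $t$ to $\infty$ can only increase the value, giving the bound
\begin{equation}
	\int_0^t k(t-s)(t-s)^p \dd s \leq \frac1\delta \int_0^\infty e^{-r/\delta} r^p \dd r.
\end{equation}
This step is what lets us produce a constant $C$ that is independent of both $\delta$ and $t$, as required by the statement.

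Next I would rescale by setting $u = r/\delta$, so that $r = \delta u$ and $\dd r = \delta \dd u$. The factor $\delta^{-1}$ in the kernel cancels one power of $\delta$ coming from $\dd r$, while $r^p = \delta^p u^p$ contributes the remaining $\delta^p$, yielding
\begin{equation}
	\frac1\delta \int_0^\infty e^{-r/\delta} r^p \dd r = \delta^p \int_0^\infty e^{-u} u^p \dd u = \delta^p\, \Gamma(p+1).
\end{equation}
Thus the claim holds with the explicit constant $C = \Gamma(p+1)$, which manifestly does not depend on $\delta$. The convergence of the final integral for every $p \geq 0$ is exactly the defining property of the Gamma function, so no separate integrability argument is needed.

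There is essentially no hard step here: the result is a routine consequence of the exponential decay of the kernel combined with the self-similar scaling $k(r) = \delta^{-1} e^{-r/\delta}$ under dilation. The only point worth flagging is the order of the two reductions — one must extend to $[0,\infty)$ \emph{before} rescaling, since it is precisely this enlargement of the domain that removes the $t$-dependence and makes the resulting integral a clean multiple of $\Gamma(p+1)$. I would present the argument in the three displayed lines above without further elaboration.
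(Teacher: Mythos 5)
Your proof is correct and follows essentially the same route as the paper: a change of variables reducing the integral to a Gamma-function bound with constant $C = \Gamma(p+1)$. The only difference is cosmetic — the paper rescales first and then bounds the resulting lower incomplete Gamma function $\gamma(p+1, t/\delta)$ by $\Gamma(p+1)$, so your claim that one \emph{must} extend the domain before rescaling is not actually true; either order works.
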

\begin{proof} The change of variable $u = (t-s)/\delta$ yields
	\begin{equation}
		\int_0^t k(t-s)(t-s)^p \dd s = \delta^p \int_0^{t/\delta} u^p e^{-u} \dd u = \delta^p \gamma\left(p+1, \frac{t}{\delta}\right),
	\end{equation}
	where $\gamma$ is the lower incomplete Gamma function, which is bounded by the complete Gamma function $\Gamma(p+1)$ independently of the second argument.
\end{proof}

\subsection{Proof of Proposition \ref{prop:zeta}}
Denoting $Y_t^\epl \defeq X_t^\epl/\epl$, we will make use of the decomposition
\cite[Formula 5.8]{PaS07}
\begin{equation}\label{eq:App_DiffXtXs}
\begin{aligned}
X_t^\epl - X_s^\epl &= -\int_s^t (\alpha \cdot V'(X_r^\epl))(1 + \Phi'(Y^\epl_r)) \dd r\\
&\quad +\sqrt{2\sigma} \int_s^t (1 + \Phi'(Y_r^\epl)) \dd W_r - \epl(\Phi(Y^\epl_t)-\Phi(Y^\epl_s)),
\end{aligned}
\end{equation}
which is obtained applying the Itô formula to $\Phi$, the solution of the cell problem \eqref{eq:CellProblem}. Recall that by definition of $Z_t^\epl$ we have
	\begin{equation}
	X_t^\epl - Z_t^\epl = \int_0^t k(t-s) (X_t^\epl - X_s^\epl) \dd s + e^{-t/\delta}X_t^\epl.
	\end{equation}
	Plugging the decomposition \eqref{eq:App_DiffXtXs} into the equation above, we obtain
	\begin{equation}
	X_t^\epl - Z_t^\epl = I_1^\epl(t) + I_2^\epl(t) + I_3^\epl(t) + I_4^\epl(t),
	\end{equation}
	where
	\begin{equation}
	\begin{aligned}
	I_1^\epl(t) &\defeq -\int_0^t k(t-s)\int_s^t (\alpha \cdot V'(X_r^\epl))(1 + \Phi'(Y^\epl_r)) \dd r \dd s,\\
	I_2^\epl(t) &\defeq \sqrt{2\sigma}\int_0^t k(t-s) \int_s^t (1 + \Phi'(Y_r^\epl)) \dd W_r \dd s, \\
	I_3^\epl(t) &\defeq - \epl \int_0^t k(t-s)(\Phi(Y^\epl_t)-\Phi(Y^\epl_s)) \dd s,\\
	I_4^\epl(t) &= e^{-t / \delta}X_t^\epl.
	\end{aligned}
	\end{equation}
	Let us analyze the terms above singularly. For $I_1^\epl(t)$, one can show \cite[Proposition 5.8]{PaS07}
	\begin{equation}
	\int_s^t (\alpha \cdot V'(X_r^\epl)) (1 + \Phi'(Y^\epl_r)) \dd r = (t - s)(A \cdot V'(X_t^\epl)) + R_1^\epl(t-s),
	\end{equation}
	where the remainder $R_1^\epl$ satisfies 
	\begin{equation}\label{eq:FirstTermPavStu}
	\left(\E^{\phi^\epl}\abs{R_1^\epl(t-s)}^p\right)^{1/p} \leq C(\epl^2 + \epl(t-s)^{1/2}+(t-s)^{3/2}).
	\end{equation}
	Therefore, it holds 
	\begin{equation}\label{eq:Diff_I1}
	\begin{aligned}
	I_1^\epl(t) &= -(A \cdot V'(X_t^\epl)) \int_0^t k(t-s) (t-s)\dd s + \int_0^t k(t-s)R_1^\epl(t-s) \dd s\\
	&= -\delta (A \cdot  V'(X_t^\epl)) + e^{-t/\delta}(t + \delta)(A \cdot V'(X_t^\epl)) + \widetilde R_1^\epl(t),
	\end{aligned}
	\end{equation}
	where we exploited the equality
	\begin{equation}
	\int_0^t k(t-s) (t-s) \dd s = \delta - e^{-t/\delta}(t + \delta),
	\end{equation}
	and where
	\begin{equation}
	\widetilde R_1^\epl(t) \defeq \int_0^t k(t-s)R_1^\epl(t-s) \dd s.
	\end{equation}
	Now, Lemma \ref{lem:FilterJensen}, the inequality \eqref{eq:FirstTermPavStu} and Lemma \ref{lem:FilterPoly} yield for all $p \geq 1$
	\begin{equation}
	\begin{aligned}
		\E^{\phi^\epl}\abs{\widetilde R_1^\epl(t)}^p &\leq C \int_0^t k(t-s)\E^{\phi^\epl} \abs{R_1^\epl(t-s)}^p \dd s\\
		&\leq C \int_0^t k(t-s)(\epl^{2p} + \epl^p(t-s)^{p/2}+(t-s)^{3p/2}) \dd s\\
		&\leq C \left( \epl^{2p} + \epl^p \delta^{p/2} + \delta^{3p/2} \right),
	\end{aligned}
	\end{equation}
	where $C$ is a positive constant independent of $\epl$ and $\delta$. Therefore, for $\delta$ sufficiently small, we get
	\begin{equation}
		\left(\E^{\phi^\epl}\abs{I_1^\epl(t)}^{p}\right)^{1/p} \leq C \left( \delta + \epl^2 + \epl \delta^{1/2} + t e^{-t/\delta} \right).
	\end{equation}
	We now consider the second term. Let us introduce the notation  
	\begin{equation}
		Q_t^\epl \defeq \int_0^t \left(1 + \Phi'(Y_r^\epl)\right) \dd W_r,
	\end{equation}
	and therefore rewrite
	\begin{equation}
		I_2^\epl(t) = \sqrt{2\sigma} \int_0^t k(t-s)(Q_t^\epl - Q_s^\epl) \dd s.
	\end{equation}
	An application of the Itô formula to $u(s, Q_s^\epl)$ where $u(s, x) = k(t-s)x$ yields
	\begin{equation}\label{eq:rewriteI2}
	\begin{aligned}
		I_2^\epl(t) &= \sqrt{2\sigma} \left(Q_t^\epl\int_0^t k(t-s) \dd s - Q_t^\epl + \delta \int_0^t k(t-s)\left(1 + \Phi'(Y_s^\epl)\right) \dd W_s\right)\\
		  			&= \delta B_t^\epl - \sqrt{2\sigma} e^{-t/\delta} Q_t^\epl \eqdef \delta B_t^\epl - R_2^\epl(t).
	\end{aligned}
	\end{equation}
	where $B_t^\epl$ is defined in \eqref{eq:defBeta}. For the remainder $R_2^\epl(t)$, let us remark that for all $p \geq 1$ it holds
	\begin{equation}\label{eq:BoundQ}
	\begin{aligned}
		\left(\E\abs{Q_t^\epl}^p\right)^2 \leq \E\abs{Q_t^\epl}^{2p} \leq Ct^{p-1}\int_0^t \E \abs{1 + \Phi'(Y_r^\epl)}^{2p} \dd r \leq C t^p
	\end{aligned}
	\end{equation}
	where we applied Jensen's inequality, an estimate for the moments of stochastic integrals \cite[Formula (3.25), p. 163]{KaS91} and the boundedness of $\Phi$. Therefore we have
	\begin{equation}\label{eq:boundR2}
		\left( \E^{\phi^\epl}\abs{R_2^\epl(t)}^p \right)^{1/p} \leq C \sqrt{t} e^{-t/\delta}.
	\end{equation}
	In order to obtain the bound \eqref{eq:estimate_beta} on $B_t^\epl$, let us remark that from \eqref{eq:rewriteI2} it holds for a constant $C > 0$ depending only on $p$ 
	\begin{equation}
		\left(\E\abs{B_t^\epl}^p\right)^{1/p}\leq C \delta^{-1}\left(\E\abs{I_2^\epl(t)}^p\right)^{1/p} + C \delta^{-1} \left(\E\abs{R_2^\epl(t)}^p\right)^{1/p}.
	\end{equation}
	The second term is bounded exponentially fast with respect to $t$ and $\delta$ due to \eqref{eq:boundR2}. For the first term, applying Lemma \ref{lem:FilterJensen}, the inequality \cite[Formula (3.25), p. 163]{KaS91} and Lemma \ref{lem:FilterPoly} we obtain for a constant $C > 0$ independent of $\delta$ and $t$
	\begin{equation}
	\begin{aligned}
		\E\abs{I_2^\epl(t)}^p &\leq C \int_0^t k(t-s) \E\abs{Q_t - Q_s}^p \dd s \\
		&\leq C\int_0^t k(t-s)(t-s)^{p/2} \dd s \leq C\delta^{p/2}.
	\end{aligned}
	\end{equation} 
	Therefore, it holds for $\delta$ sufficiently small
	\begin{equation}
		\left(\E\abs{B_t^\epl}^p\right)^{1/p}\leq C \delta^{-1/2},
	\end{equation}
	which proves the bound \eqref{eq:estimate_beta}. Let us now consider $I_3^\epl(t)$. Since $\Phi$ is bounded, we simply have
	\begin{equation}
	\abs{I_3^\epl(t)} \leq C \epl, 
	\end{equation}
	almost surely. Finally, due to \cite[Corollary 5.4]{PaS07}, we know that $X_t^\epl$ has bounded moments of all orders and therefore
	\begin{equation}
	\left(\E^{\phi^\epl} \abs{I_4^\epl(t)}^p\right)^{1/p} \leq C e^{-t/\delta},
	\end{equation}
	which concludes the proof. \qed

\section{Proofs of Section \ref{sec:Fast}} \label{ap:ProofsDeltaZeta}

\subsection{Preliminary estimates}

The following lemma shows that $Z^\epl$ has bounded moments of all orders.
\begin{lemma} \label{lem:bounded_momentZ}
	Under Assumption \ref{as:regularity}, let $Z^\epl$ be distributed as the invariant measure $\mu^\epl$ of the couple $(X^\epl, Z^\epl)^\top$. Then for any $p \geq 1$ there exists a constant $C > 0$ uniform in $\epl$ such that 
	\begin{equation}
	\E^{\rho^\epl}|Z^\epl|^p \le C.
	\end{equation}
\end{lemma}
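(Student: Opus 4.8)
The plan is to exploit the representation of the stationary filtered process $Z^\epl$ as a unit-mass exponential average of the stationary multiscale process $X^\epl$, and thereby reduce the moment bound on $Z^\epl$ to the uniform-in-$\epl$ moment bounds on $X^\epl$, which are available from \cite[Corollary 5.4]{PaS07}.

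First I would recall that $Z^\epl$ solves the linear equation $\d Z^\epl_t = \delta^{-1}(X^\epl_t - Z^\epl_t)\dd t$ driven by $X^\epl$, whose stationary solution admits the two-sided representation
\begin{equation}
Z^\epl_t = \int_{-\infty}^t k(t-s) X^\epl_s \dd s,
\end{equation}
where $X^\epl$ denotes the stationary version of the multiscale process (which exists by the geometric ergodicity of Lemma \ref{lem:ergodicity}) and $k(r) = \delta^{-1} e^{-r/\delta}$. The key structural fact is that this kernel integrates exactly to one over the whole half-line, $\int_{-\infty}^t k(t-s)\dd s = \int_0^\infty k(r)\dd r = 1$, so that $s \mapsto k(t-s)$ is a genuine probability density on $(-\infty, t]$.

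By stationarity it then suffices to bound $\E^{\rho^\epl}\abs{Z^\epl}^p = \E\abs{Z^\epl_0}^p$. Tonelli's theorem and stationarity of $X^\epl$ give $\E\int_{-\infty}^0 k(-s)\abs{X^\epl_s}^p\dd s = \E^{\phi^\epl}\abs{X^\epl}^p < \infty$ by \cite[Corollary 5.4]{PaS07}, so the integrand is almost surely finite and the infinite-horizon analogue of the Jensen-type inequality of Lemma \ref{lem:FilterJensen} (valid precisely because $k(-\cdot)$ has unit mass) applies pathwise,
\begin{equation}
\abs{Z^\epl_0}^p = \abs{\int_{-\infty}^0 k(-s) X^\epl_s \dd s}^p \leq \int_{-\infty}^0 k(-s)\abs{X^\epl_s}^p \dd s.
\end{equation}
Taking expectations and using Tonelli together with stationarity of $X^\epl$ once more yields
\begin{equation}
\E^{\rho^\epl}\abs{Z^\epl}^p \leq \int_{-\infty}^0 k(-s)\, \E^{\phi^\epl}\abs{X^\epl}^p \dd s = \E^{\phi^\epl}\abs{X^\epl}^p,
\end{equation}
and the right-hand side is bounded by a constant $C$ uniform in $\epl$, again by \cite[Corollary 5.4]{PaS07}, which concludes the argument.

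I do not expect a serious obstacle: the bound is essentially the contraction property of the averaging kernel. The only points requiring care are the justification of the stationary two-sided representation of $Z^\epl$ (obtained from the explicit solution of the linear ODE by sending the initial time to $-\infty$ and discarding the transient $e^{-t/\delta}$-term via geometric ergodicity) and the Tonelli interchanges, both routine. An equivalent alternative that avoids the two-sided process is to start $(X^\epl, Z^\epl)^\top$ at stationarity, write $Z^\epl_t = e^{-t/\delta}Z^\epl_0 + \int_0^t k(t-s)X^\epl_s\dd s$, apply Lemma \ref{lem:FilterJensen} to the integral term, and let $t \to \infty$; the boundary term then vanishes and one recovers the same uniform bound $\E^{\phi^\epl}\abs{X^\epl}^p$.
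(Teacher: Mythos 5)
Your proof is correct, and it rests on the same two ingredients as the paper's: the Jensen/contraction property of the unit-mass exponential kernel (Lemma \ref{lem:FilterJensen}) and the uniform-in-$\epl$ stationary moment bounds on $X^\epl$ from \cite[Corollary 5.4]{PaS07}. Where you genuinely differ is in how the invariant measure of the couple is reached. The paper starts $X^\epl$ at stationarity with $Z^\epl_0 = 0$, bounds $\E^{\phi^\epl}\abs{Z^\epl_t}^p$ uniformly in $t$ via the one-sided representation and Lemma \ref{lem:FilterJensen}, and then transfers to $\rho^\epl$ by the geometric ergodicity estimate of Lemma \ref{lem:ergodicity} with $f(x,z)$ polynomially bounded, letting $t$ be large; integrability of such $f$ under $\rho^\epl$ is supplied by the ergodic theory of \cite{MSH02}, so no a priori finiteness is needed. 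You instead use the two-sided stationary representation $Z^\epl_t = \int_{-\infty}^t k(t-s)X^\epl_s \dd s$, where $k(-\cdot)$ has exactly unit mass, so Jensen and Tonelli give the bound, including its finiteness, in one stroke with no ergodicity transfer; the price is the justification of that representation and, implicitly, the uniqueness of the invariant measure of \eqref{eq:systemSDE} (again Lemma \ref{lem:ergodicity}) to identify your stationary construction with $\mu^\epl$ — a step you correctly flag and which is indeed routine (the transient vanishes in probability since $Z^\epl_{t_0}$ has fixed law, and the improper integral converges a.s. by the Tonelli computation). One caveat: the ``equivalent alternative'' you sketch at the end is circular as stated. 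Starting the couple at stationarity, the identity $\E\abs{Z^\epl_t}^p = \E^{\rho^\epl}\abs{Z^\epl}^p$ for all $t$ means that discarding the boundary term $e^{-t/\delta}Z^\epl_0$ in $L^p$ (say via Minkowski) only yields the bound after rearranging, which presupposes $\E^{\rho^\epl}\abs{Z^\epl}^p < \infty$ — precisely part of what is being proved; if that moment were infinite, the inequality would carry no information. Your main route and the paper's both avoid this pitfall, by producing finiteness as output rather than assuming it.
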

\begin{proof} Let $X_t^\epl$ be at stationarity with respect to its invariant measure, which we recall having density denoted as $\phi^\epl$. Let $Z_t^\epl$ be the corresponding filtered process. By definition of $Z^\epl_t$ and applying Lemma \ref{lem:FilterJensen} we have
	\begin{equation}
	\begin{aligned}
	\E^{\phi^\epl} |Z_t^\epl|^p &= \E^{\phi^\epl} \left| \int_0^t k(t-s) X_s^\epl \dd s \right|^p \\
	&\leq \int_0^t k(t - s) \E^{\phi^\epl} |X_s^\epl|^p \dd s,
	\end{aligned}
	\end{equation}
	which, together with the definition of $k$ and the fact that $X_s^\epl$ has bounded moments of all orders \cite[Corollary 5.4]{PaS07}, implies for a constant $C>0$
	\begin{equation}
	\E^{\phi^\epl}|Z_t^\epl|^p \le C.
	\end{equation}
	In order to conclude, we remark that due to Lemma \ref{lem:ergodicity} we have for all $t \geq 0$
	\begin{equation}
	\E^{\rho^\epl} \abs{Z^\epl}^p \leq \E^{\phi^\epl}\abs{Z_t^\epl}^p + C e^{-\lambda t},
	\end{equation}
	which, for $t$ sufficiently big, yields the desired result.
\end{proof}

Corollary \ref{cor:distanceZandX} is a direct consequence of Proposition \ref{prop:zeta} and provides a rough estimate of the difference between the trajectories $X_t^\epl$ and $Z_t^\epl$ when they are at stationarity.

\begin{corollary} \label{cor:distanceZandX} Under Assumption \ref{as:regularity}, let the couple $(X^\epl, Z^\epl)^\top$ be distributed as its invariant measure $\mu^\epl$. Then, if $\delta \leq 1$, it holds for any $p \geq 1$
	\begin{equation}
	\left(\E^{\rho^\epl} \abs{X^\epl - Z^\epl}^p\right)^{1/p} \leq C\left(\epl + \delta^{1/2}\right),
	\end{equation}
	for a constant $C > 0$ independent of $\epl$ and $\delta$.
\end{corollary}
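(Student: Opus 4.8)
The plan is to read the estimate straight off Proposition \ref{prop:zeta} and then transfer it to the stationary regime by letting $t \to \infty$. For every $t > 0$ the decomposition $X_t^\epl - Z_t^\epl = \delta B_t^\epl + R(\epl, \delta)$ combined with Minkowski's inequality gives
\begin{equation}
\left(\E^{\phi^\epl} \abs{X_t^\epl - Z_t^\epl}^p\right)^{1/p} \leq \delta \left(\E^{\phi^\epl} \abs{B_t^\epl}^p\right)^{1/p} + \left(\E^{\phi^\epl} \abs{R(\epl, \delta)}^p\right)^{1/p}.
\end{equation}
Inserting \eqref{eq:estimate_beta} bounds the first summand by $\delta \cdot C \delta^{-1/2} = C\delta^{1/2}$, while \eqref{eq:remainder_zeta} bounds the second by $C(\delta + \epl + \max\{1,t\} e^{-t/\delta})$. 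Using $\delta \leq 1$, so that $\delta \leq \delta^{1/2}$, these combine into
\begin{equation}
\left(\E^{\phi^\epl} \abs{X_t^\epl - Z_t^\epl}^p\right)^{1/p} \leq C\left(\epl + \delta^{1/2} + \max\{1,t\}\, e^{-t/\delta}\right),
\end{equation}
with $C$ independent of $\epl$, $\delta$ and $t$.

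The remaining task is to replace $\E^{\phi^\epl}$ — taken with $X^\epl$ at stationarity but $Z^\epl$ initialised at $Z_0^\epl = 0$, so that the \emph{couple} is not stationary — by the genuinely stationary expectation $\E^{\rho^\epl}$. This is where I would send $t \to \infty$: the term $\max\{1,t\}\,e^{-t/\delta}$ vanishes, so the right-hand side tends to $C(\epl + \delta^{1/2})$. For the left-hand side I would invoke the geometric ergodicity of $(X^\epl, Z^\epl)^\top$ from Lemma \ref{lem:ergodicity} applied to the polynomially bounded observable $f(x,z) = \abs{x-z}^p$, which satisfies $f(x,z) \leq C(1 + \norm{(x,z)^\top}_2^q)$ for $q = \lceil p \rceil$; this yields $\E^{\phi^\epl} \abs{X_t^\epl - Z_t^\epl}^p \to \E^{\rho^\epl} \abs{X^\epl - Z^\epl}^p$. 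Taking $p$-th roots and passing to the limit in the displayed bound then gives the claim.

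The main obstacle is precisely this limiting step. Lemma \ref{lem:ergodicity} is stated for $\rho^\epl$-a.e.\ deterministic starting datum, whereas here the initial condition is the random couple $(X_0^\epl, 0)$ with $X_0^\epl \sim \phi^\epl$, which is supported on a lower-dimensional set and need not be covered by the ``a.e.'' exclusion. To make the passage rigorous I would average the per-initial-condition ergodic bound over $X_0^\epl \sim \phi^\epl$: the averaging is legitimate because the prefactor $1 + \norm{(X_0^\epl, 0)^\top}_2^q$ is $\phi^\epl$-integrable, since $X^\epl$ has bounded moments of all orders (as used in Lemma \ref{lem:bounded_momentZ} via \cite[Corollary 5.4]{PaS07}). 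Dominated convergence then transfers the pointwise $e^{-\lambda t}$ decay to the averaged moment, securing convergence of $\E^{\phi^\epl} \abs{X_t^\epl - Z_t^\epl}^p$ to its stationary value and completing the argument.
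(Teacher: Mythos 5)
Your proof is correct and takes essentially the same route as the paper's: both use Proposition \ref{prop:zeta} with the estimates \eqref{eq:estimate_beta} and \eqref{eq:remainder_zeta} to bound $\E^{\phi^\epl}\abs{X_t^\epl - Z_t^\epl}^p$ (where $X^\epl$ is stationary but the couple is not), and then invoke the geometric ergodicity of Lemma \ref{lem:ergodicity} to pass to the stationary expectation $\E^{\rho^\epl}$, letting $t$ grow to kill the transient $\max\{1,t\}e^{-t/\delta}$ and $e^{-\lambda t}$ terms. Your closing discussion of the $\rho^\epl$-a.e.\ initial-condition subtlety (the datum $(X_0^\epl,0)$ lies on a Lebesgue-null line) is in fact more careful than the paper's own proof, which applies Lemma \ref{lem:ergodicity} to this initial condition without comment.
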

\begin{proof}
	Let $p \geq 1$, then due to Proposition \ref{prop:zeta} there exists a constant $C > 0$ depending only on $p$ such that
	\begin{equation}
	\E^{\phi^\epl}\abs{X_t^\epl - Z_t^\epl}^p \leq C\left(\epl^p + \delta^{p/2}\right).
	\end{equation}
	Let us now remark that this result holds for $X_t^\epl$ being at stationarity and for $Z_t^\epl$ being its filtered process, and not for a couple $(X^\epl, Z^\epl)^\top \sim \mu^\epl$. In order to conclude, we remark that due to Lemma \ref{lem:ergodicity} we have for all $t \geq 0$
	\begin{equation}
	\E^{\rho^\epl} \abs{X^\epl - Z^\epl}^p \leq \E^{\phi^\epl}\abs{X_t^\epl - Z_t^\epl}^p + Ce^{-\lambda t},
	\end{equation}
	which, for $t$ sufficiently big, yields the desired result.
\end{proof}

The result above can be in some sense rather counter-intuitive. Indeed, for a fixed $\epl > 0$ and for $\delta \to 0$ independently of $\epl$, one expects the filtered trajectory $Z^\epl$ to approach $X^\epl$. This is provided by the following Lemma.
\begin{lemma}\label{lem:distanceZandX2} Under Assumption \ref{as:regularity}, let the couple $(X^\epl, Z^\epl)^\top$ be distributed as its invariant measure $\mu^\epl$. Then, if $\delta \leq 1$, it holds for any $p \geq 1$
	\begin{equation}
	\left(\E^{\rho^\epl} \abs{X^\epl - Z^\epl}^p\right)^{1/p} \leq C\left(\delta\epl^{-1} + \delta^{1/2}\right),
	\end{equation}
	for a constant $C > 0$ independent of $\epl$ and $\delta$.	
\end{lemma}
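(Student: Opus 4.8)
The plan is to follow the same scheme as the proof of Corollary \ref{cor:distanceZandX}, but to replace the homogenization-based decomposition \eqref{eq:App_DiffXtXs} of the increment $X_t^\epl - X_s^\epl$ — which, exploiting the cell function $\Phi$, produces the bound $C(\epl + \delta^{1/2})$ — by a cruder estimate read directly off the multiscale SDE \eqref{eq:SDE_MS}. Starting from the identity
\begin{equation}
	X_t^\epl - Z_t^\epl = \int_0^t k(t-s)(X_t^\epl - X_s^\epl)\dd s + e^{-t/\delta}X_t^\epl,
\end{equation}
I would write the increment as $X_t^\epl - X_s^\epl = -\int_s^t \alpha\cdot V'(X_r^\epl)\dd r - \int_s^t \frac1\epl p'(X_r^\epl/\epl)\dd r + \sqrt{2\sigma}\,(W_t - W_s)$ and bound each term at stationarity. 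Since $p'$ is bounded ($p$ being smooth and $L$-periodic), the fast term is controlled pathwise by $C(t-s)/\epl$; by Assumption \ref{as:regularity}\ref{as:regularity_Lip} and the bounded moments of $X_r^\epl$ (\cite[Corollary 5.4]{PaS07}) the slow term contributes at most $C(t-s)$; and the martingale increment satisfies $(\E\abs{W_t-W_s}^p)^{1/p} \le C(t-s)^{1/2}$. For $t-s \le 1$ and $\epl \le 1$ the $O(t-s)$ contributions are absorbed, giving
\begin{equation}
	\left(\E^{\phi^\epl}\abs{X_t^\epl - X_s^\epl}^p\right)^{1/p} \le C\left(\frac{t-s}{\epl} + (t-s)^{1/2}\right).
\end{equation}

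I would then insert this into the filter exactly as in Proposition \ref{prop:zeta} and its corollary: applying Lemma \ref{lem:FilterJensen} pathwise and Tonelli to the nonnegative integrand bounds $\E^{\phi^\epl}|\int_0^t k(t-s)(X_t^\epl - X_s^\epl)\dd s|^p$ by $\int_0^t k(t-s)\,\E^{\phi^\epl}\abs{X_t^\epl - X_s^\epl}^p\dd s$, and Lemma \ref{lem:FilterPoly}, applied to the moments $(t-s)^p/\epl^p$ and $(t-s)^{p/2}$, controls this by $C(\delta^p/\epl^p + \delta^{p/2})$. The boundary term $e^{-t/\delta}X_t^\epl$ decays exponentially in $t$ thanks to the bounded moments of $X_t^\epl$, so one obtains $(\E^{\phi^\epl}\abs{X_t^\epl - Z_t^\epl}^p)^{1/p} \le C(\delta\epl^{-1} + \delta^{1/2} + e^{-t/\delta})$. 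Finally, as in Corollary \ref{cor:distanceZandX}, the function $(x,z)\mapsto\abs{x-z}^p$ has polynomial growth, so Lemma \ref{lem:ergodicity} gives $\E^{\rho^\epl}\abs{X^\epl-Z^\epl}^p \le \E^{\phi^\epl}\abs{X_t^\epl-Z_t^\epl}^p + Ce^{-\lambda t}$; letting $t\to\infty$ eliminates both exponential remainders and yields the claim.

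The argument is essentially routine, so there is no serious obstacle; the only genuine content is the observation that estimating $\frac1\epl p'$ by its supremum norm — rather than using the cancellation through the cell problem — trades the favourable $\epl$-dependence of Corollary \ref{cor:distanceZandX} for a $\delta\epl^{-1}$-dependence, which is the sharper of the two precisely when $\delta \ll \epl^2$ and which makes explicit the expected behaviour $Z^\epl \to X^\epl$ as $\delta \to 0$. The mildest care is needed in applying Jensen's inequality pathwise before taking the expectation and invoking Tonelli, and in checking the growth condition required to pass from the filter of a stationary $X^\epl$ to the joint invariant measure $\rho^\epl$.
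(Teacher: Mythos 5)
Your proposal is correct and follows essentially the same route as the paper's own proof: bound the increment $X_t^\epl - X_s^\epl$ crudely from the SDE \eqref{eq:SDE_MS} (estimating $\frac1\epl p'$ by its supremum rather than through the cell problem), insert this into the filter identity via Lemma \ref{lem:FilterJensen} and Lemma \ref{lem:FilterPoly}, and pass to the joint invariant measure by geometric ergodicity (Lemma \ref{lem:ergodicity}). The only cosmetic difference is the treatment of the $O(t-s)$ slow-drift contribution: the paper carries it through as a $\delta^p$ term and absorbs it into $\delta^{p/2}$ at the end using $\delta \leq 1$, whereas you absorb it earlier (which requires $\epl \leq 1$, harmless in this context but not how the stated hypothesis is used).
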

\begin{proof}
	By equation \eqref{eq:SDE_MS} we have for all $0 \le s < t$
	\begin{equation}
	X_t^\epl - X_s^\epl = -\alpha \int_s^t V'(X_r^\epl) \dd r - \frac1\epl \int_s^t p' \left( \frac{X_r^\epl}{\epl} \right) \dd r + \sqrt{2\sigma}(W_t - W_s).
	\end{equation}
	Therefore, by Assumption \ref{as:regularity} and since $X_t^\epl$ has bounded moments of all orders at stationarity \cite[Corollary 5.4]{PaS07}, it holds for any $p \ge 1$ and a constant $C > 0$
	\begin{equation} \label{eq:Xt_Xs}
	\E^{\phi^\epl} \abs{X_t^\epl - X_s^\epl}^p \le C \left( (t-s)^p + (t-s)^p\epl^{-p} + (t-s)^{p/2} \right),
	\end{equation}
	where $\phi^\epl$ is the invariant measure of $X^\epl$. By definition of $Z_t^\epl$ we have
	\begin{equation}
	X_t^\epl - Z_t^\epl = \int_0^t k(t-s) (X_t^\epl - X_s^\epl) \dd s + e^{-t/\delta}X_t^\epl,
	\end{equation}
	which, applying Lemma \ref{lem:FilterJensen}, the inequality \eqref{eq:Xt_Xs} and Lemma \ref{lem:FilterPoly}, implies
	\begin{equation}
	\begin{aligned}
	\E^{\phi^\epl} \abs{X_t^\epl - Z_t^\epl}^p &\le C \left( \int_0^t k(t-s) \E^{\phi^\epl} \abs{X_t^\epl - X_s^\epl}^p \dd s + e^{-pt/\delta}  \E^{\phi^\epl} \abs{X_t^\epl}^p \right) \\
	&\le C \left( \delta^p + \delta^p\epl^{-p} + \delta^{p/2} + e^{-pt/\delta} \right).
	\end{aligned}
	\end{equation}
	Geometric ergodicity (Lemma \ref{lem:ergodicity}) then implies for $\rho^\epl$ the measure of the couple $(X^\epl, Z^\epl)^\top$
	\begin{equation}
	\E^{\rho^\epl} \abs{X^\epl - Z^\epl}^p \leq \E^{\phi^\epl}\abs{X_t^\epl - Z_t^\epl}^p + Ce^{-\lambda t},
	\end{equation}
	which, for $t$ sufficiently big and since $\delta \leq 1$ yields the desired result.
\end{proof}

Let us conclude with a last preliminary estimate concerning the matrices $\widetilde{\mathcal M}_\epl$ and $\mathcal M_\epl$ defined in \eqref{eq:DefCalMTilde} and \eqref{eq:DefCalM}, respectively.

\begin{lemma}\label{lem:distanceMandTildeM} Let the assumptions of Corollary \ref{cor:distanceZandX} hold. Then the matrices $\mathcal M_\epl$ and $\widetilde{\mathcal M}_\epl$ satisfy
	\begin{equation}
	\norm{\mathcal M_\epl - \widetilde{\mathcal M}_\epl}_2 \leq C\left(\epl + \delta^{1/2}\right),
	\end{equation}
	for a constant $C > 0$ independent of $\epl$ and $\delta$.
\end{lemma}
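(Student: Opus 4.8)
The plan is to subtract the two definitions in \eqref{eq:DefCalM} and \eqref{eq:DefCalMTilde} directly. Since both expectations are taken against the same invariant measure $\rho^\epl$ and share the common factor $V'(X^\epl)$ in the right-hand slot of the outer product, we obtain
\begin{equation}
	\mathcal M_\epl - \widetilde{\mathcal M}_\epl = \E^{\rho^\epl}\left[\left(V'(X^\epl) - V'(Z^\epl)\right)\otimes V'(X^\epl)\right],
\end{equation}
so the task reduces to estimating the size of this single matrix-valued expectation. The whole argument is then an assembly of the preliminary estimates already proved in this section.

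First I would pass to individual entries. For each pair of indices $i, j \in \{1, \ldots, N\}$, the $(i,j)$ entry is $\E^{\rho^\epl}[(V_i'(X^\epl) - V_i'(Z^\epl))\, V_j'(X^\epl)]$, and the Cauchy--Schwarz inequality gives
\begin{equation}
	\abs{\left(\mathcal M_\epl - \widetilde{\mathcal M}_\epl\right)_{ij}} \leq \left(\E^{\rho^\epl}\abs{V_i'(X^\epl) - V_i'(Z^\epl)}^2\right)^{1/2}\left(\E^{\rho^\epl}\abs{V_j'(X^\epl)}^2\right)^{1/2}.
\end{equation}
The first factor is controlled using the Lipschitz continuity of $V'$ from Assumption \ref{as:regularity}\ref{as:regularity_Lip}, which bounds it by $C\left(\E^{\rho^\epl}\abs{X^\epl - Z^\epl}^2\right)^{1/2}$; applying Corollary \ref{cor:distanceZandX} with $p = 2$ then yields the bound $C(\epl + \delta^{1/2})$. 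The second factor is handled by the polynomial boundedness of $V_j'$ (again Assumption \ref{as:regularity}\ref{as:regularity_Lip}) together with the bounded moments of $X^\epl$ at stationarity from \cite[Corollary 5.4]{PaS07}, giving a constant bound.

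Finally, since the parameter dimension $N$ is fixed, passing from the entrywise estimate to the spectral norm $\norm{\cdot}_2$ costs only a dimensional constant, which delivers the claimed inequality. The only point requiring genuine care is that the moment bound on $V'(X^\epl)$ must be \emph{uniform in $\epl$}: this is precisely why I would invoke the uniform-in-$\epl$ moment estimates on $X^\epl$ rather than any $\epl$-dependent pointwise bound, so that the constant $C$ does not degenerate as $\epl \to 0$. Beyond this bookkeeping, I anticipate no substantive obstacle, as the result is an immediate consequence of the Lipschitz property, Corollary \ref{cor:distanceZandX}, and the uniform moment control.
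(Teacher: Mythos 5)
Your proof is correct and follows essentially the same route as the paper's: both reduce the difference to $\E^{\rho^\epl}\left[\left(V'(X^\epl) - V'(Z^\epl)\right)\otimes V'(X^\epl)\right]$, apply Cauchy--Schwarz, and conclude via the Lipschitz continuity of $V'$, Corollary \ref{cor:distanceZandX} with $p=2$, and the uniform-in-$\epl$ moment bounds of \cite[Corollary 5.4]{PaS07}. The only cosmetic difference is that you argue entrywise and pay a dimensional constant to pass to $\norm{\cdot}_2$, whereas the paper applies Jensen and Cauchy--Schwarz directly at the level of the matrix norm.
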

\begin{proof} Applying Jensen's and Cauchy--Schwarz inequalities we have
	\begin{equation}
	\begin{aligned}
	\norm{\mathcal M_\epl - \widetilde{\mathcal M}_\epl}_2 &\leq \E^{\rho^\epl} \norm{\left(V'(Z^\epl) - V'(X^\epl)\right) \otimes V'(X^\epl)}_2 \\
	&\leq \left(\E^{\rho^\epl}\norm{V'(Z^\epl) - V'(X^\epl)}_2^2\right)^{1/2} \left(\E^{\rho^\epl} \norm{V'(X^\epl)}_2^{2}\right)^{1/2}.
	\end{aligned}
	\end{equation}
	The Lipschitz condition on $V'$ together with the boundedness of the moments of $X^\epl$ and Corollary \ref{cor:distanceZandX} yield for a constant $C > 0$
	\begin{equation}
	\begin{aligned}
	\norm{\mathcal M_\epl - \widetilde{\mathcal M}_\epl}_2 &\leq C\left(\E^{\rho^\epl}\abs{Z^\epl - X^\epl}^{2}\right)^{1/2} \leq C \left(\epl + \delta^{1/2} \right),
	\end{aligned}
	\end{equation}
	which is the desired result.
\end{proof}

\subsection{Proof of Lemma \ref{lem:quadruple}}

	Let us consider the following system of stochastic differential equations for the processes $X_t^\epl, Z_t^\epl, B_t^\epl, Y_t^\epl$
	\begin{equation}
	\label{eq:systemSDEquadruple}
	\begin{aligned}
	\d X_t^\epl &= -\alpha \cdot V'(X_t^\epl) \dd t - \frac1\epl p'(Y_t^\epl) \dd t+ \sqrt{2\sigma} \dd W_t, \\
	\d Z_t^\epl &= \frac1\delta \left ( X^\epl_t - Z^\epl_t \right ) \dd t, \\
	\d B_t^\epl &= - \frac1\delta B_t^\epl \dd t + \frac{\sqrt{2\sigma}}{\delta}(1+\Phi'(Y_t^\epl)) \dd W_t, \\
	\d Y_t^\epl &= -\frac1\epl \alpha \cdot V'(X_t^\epl) \dd t - \frac{1}{\epl^2} p'(Y_t^\epl) \dd t+ \frac{\sqrt{2\sigma}}{\epl} \dd W_t,
	\end{aligned}
	\end{equation}
	whose generator $\widetilde{\mathcal L}_\epl$ is given by
	\begin{equation}
	\begin{aligned}
	\widetilde{\mathcal L}_\epl =& -\left( \alpha \cdot V'(x) + \frac1\epl p'(y) \right) \partial_x + \frac1\delta (x-z) \partial_z - \frac1\delta b \partial_b - \left( \frac1\epl \alpha \cdot V'(x) + \frac{1}{\epl^2} p'(y) \right) \partial_y \\
	& + \sigma \left( \partial_{xx}^2 + \frac2\epl \partial_{xy}^2 + \frac{1}{\epl^2} \partial_{yy}^2 + \frac{2(1+\Phi'(y))}{\delta} \partial_{xb}^2 + \frac{2(1+\Phi'(y))}{\epl \delta} \partial_{yb}^2 + \frac{(1+\Phi'(y))^2}{\delta^2} \partial_{bb}^2 \right).
	\end{aligned}
	\end{equation}
	Let us denote by $\eta^\epl \colon \R^3 \times [0,L] \to \R$, $\eta^\epl = \eta^\epl(x,z,b,y)$, the invariant measure of the quadruple $(X_t^\epl, Z_t^\epl, B_t^\epl, Y_t^\epl)$. Then $\eta^\epl$ solves the stationary Fokker-Planck equation $\widetilde{\mathcal L}_\epl^* \eta^\epl = 0$, i.e., explicitly
	\begin{equation} \label{eq:FPSDEquadruple}
	\begin{aligned}
	&\partial_x \left( \left( \alpha \cdot V'(x) + \frac1\epl p'(y) \right) \eta^\epl \right) + \frac1\delta\partial_z \left( (z-x) \eta^\epl \right) \\
	&\quad + \frac1\delta \partial_b (b\eta^\epl)  + \partial_y \left( \left( \frac1\epl \alpha \cdot V'(x) + \frac{1}{\epl^2} p'(y) \right) \eta^\epl \right) + \sigma \left( \partial_{xx}^2 \eta^\epl + \frac2\epl \partial_{xy}^2 \eta^\epl + \frac1{\epl^2} \partial_{yy}^2 \eta^\epl \right) \\
	&\quad + \sigma \left( \frac2\delta \partial_{xb}^2 \left( (1+\Phi'(y)) \eta^\epl \right) + \frac2{\epl\delta}\partial_{yb}^2 \left( (1+\Phi'(y))\eta^\epl \right) + \frac1{\delta^2}\partial_{bb}^2 \left( (1+\Phi'(y))^2 \eta^\epl \right) \right) = 0.
	\end{aligned}
	\end{equation}
	We now multiply the equation above by a continuous differentiable function $f \colon \R^2 \to \R^N$, $f = f(z,b)$, and integrate with respect to $x$, $z$, $b$ and $y$. Then an integration by parts yields
	\begin{equation}
	\frac1\delta \int_{\R^3 \times [0,L]} \partial_z f(z,b) (x-z) \eta^\epl - \frac1\delta \int_{\R^3 \times [0,L]} \partial_b f(z,b) b \eta^\epl + \frac{\sigma}{\delta^2} \int_{\R^3 \times [0,L]} \partial_{bb}^2 f(z,b) (1+\Phi'(y))^2 \eta^\epl,
	\end{equation}
	which implies the following identity in $\R^N$
	\begin{equation} \label{eq:super_magic_formula}
	\delta \E^{\eta^\epl}\left[ \partial_b f(Z^\epl, B^\epl) B^\epl \right] = \sigma \E^{\eta^\epl} \left[ \partial_{bb}^2 f(Z^\epl, B^\epl) (1+\Phi'(Y^\epl)) \right] + \delta \E^{\eta^\epl} \left[ \partial_z f(Z^\epl, B^\epl) (X^\epl-Z^\epl) \right].
	\end{equation}
	Choosing
	\begin{equation}
	f(z,b) = \frac12 b^2 V''(z),
	\end{equation}
	we obtain
	\begin{equation}
	\begin{aligned}
	\delta \E^{\eta^\epl}\left[ (B^\epl)^2 V''(Z^\epl) \right] &= \sigma \E^{\eta^\epl} \left[ V''(Z^\epl) (1+\Phi'(Y^\epl)) \right] + \frac\delta2 \E^{\eta^\epl} \left[ (B^\epl)^2 V'''(Z^\epl) (X^\epl-Z^\epl) \right] \\
	&\eqdef \sigma \E^{\eta^\epl} \left[ V''(Z^\epl) (1+\Phi'(Y^\epl)) \right] + \widetilde R(\epl,\delta).
	\end{aligned}
	\end{equation}
	We now consider the remainder and, applying Hölder's inequality, Corollary \ref{cor:distanceZandX}, Lemma \ref{lem:bounded_momentZ}, Assumption \ref{as:regularityZeta} and \eqref{eq:estimate_beta}, we get for $p,q,r$ such that $1/p+1/q+1/r=1$
	\begin{equation}
	\left| \widetilde R(\epl,\delta) \right| \le C \delta \left( \E^{\eta^\epl} |B^\epl|^{2p} \right)^{1/p} \left( \E^{\eta^\epl} |V'''(Z^\epl)|^q \right)^{1/q} \left( \E^{\eta^\epl} |X^\epl - Z^\epl|^r \right)^{1/r} \le C (\delta^{1/2} + \epl),
	\end{equation}
	which completes the proof. \qed 

\subsection{Proof of Lemma \ref{lem:quadruple_convergence}}

	Let us introduce the notation 
	\begin{equation}
	\Delta(\epl) = \left| \sigma \E^{\eta^\epl} [V''(Z^\epl) (1 + \Phi'(Y^\epl))^2] - \Sigma \E^{\phi^0} [V''(X)] \right|,
	\end{equation}
	and note that the aim is to show that $\lim_{\epl \to 0} \Delta(\epl) = 0$. By the triangle inequality we get
	\begin{equation}
	\begin{aligned}
	\Delta(\epl) \le& \left| \sigma \E^{\eta^\epl} [V''(Z^\epl) (1 + \Phi'(Y^\epl))^2] - \sigma \E^{\eta^\epl} [V''(X^\epl) (1 + \Phi'(Y^\epl))^2] \right| \\
	&+ \left| \sigma \E^{\eta^\epl} [V''(X^\epl) (1 + \Phi'(Y^\epl))^2] - \Sigma \E^{\phi^0} [V''(X)] \right| \\
	\eqdef& \Delta_1(\epl) + \Delta_2(\epl).
	\end{aligned}
	\end{equation}
	We first study $\Delta_1(\epl)$ and due to the boundedness of $\Phi'$, Assumption \ref{as:regularityZeta} and Lemma \ref{cor:distanceZandX} we have
	\begin{equation}
	\Delta_1(\epl) \le C \E^{\eta^\epl} \abs{X^\epl - Z^\epl} \le C (\delta^{1/2} + \epl) = C (\epl^{\zeta/2} + \epl),
	\end{equation}
	which implies
	\begin{equation}
	\lim_{\epl \to 0} \Delta_1(\epl) = 0.
	\end{equation}
	We now consider $\Delta_2(\epl)$. Integrating equation \eqref{eq:FPSDEquadruple} with respect to $z$ and $b$ we obtain the Fokker-Planck equation for the stationary marginal distribution $\lambda \colon \R \times [0,L]$, $\lambda = \lambda(x,y)$, of the couple $(X^\epl,Y^\epl)$
	\begin{equation}
	\begin{aligned}
	\partial_x \left( \left( \alpha \cdot V'(x) + \frac1\epl p'(y) \right) \lambda \right) + \partial_y \left( \left( \frac1\epl \alpha \cdot V'(x) + \frac{1}{\epl^2} p'(y) \right) \lambda \right) & \\
	+ \sigma \left( \partial_{xx}^2 \lambda + \partial_{xy}^2 \left( \frac2\epl \lambda \right) + \partial_{yy}^2 \left( \frac{1}{\epl^2} \lambda \right) \right) &= 0,
	\end{aligned}
	\end{equation}
	whose solution is given by
	\begin{equation}
	\lambda(x,y) = \frac{1}{C_\lambda} \exp \left( - \frac\alpha\sigma V(x) - \frac1\sigma p(y) \right),
	\end{equation}
	where
	\begin{equation}
	\begin{aligned}
	C_{\lambda} &= \int_\R \int_0^L \exp \left( - \frac\alpha\sigma V(x) - \frac1\sigma p(y) \right) \dd x \dd y \\
	&= \left( \int_\R \exp \left( - \frac\alpha\sigma V(x) \right) \dd x \right) \left( \int_0^L \exp \left( - \frac1\sigma p(y) \right) \dd y \right) \\
	&\eqdef C_{\lambda_x} C_{\lambda_y}.
	\end{aligned}
	\end{equation}
	Therefore, since $\Sigma = K\sigma$ and by equations \eqref{eq:K_HOM} and \eqref{eq:phi0} we have
	\begin{equation}
	\begin{aligned}
	\sigma \E^{\eta^\epl} [V''(X^\epl) (1 + \Phi'(Y^\epl))^2] &= \sigma \int_\R \int_0^L V''(x) (1 + \Phi'(y))^2 \frac{1}{C_\lambda} \exp \left( - \frac\alpha\sigma V(x) - \frac1\sigma p(y) \right) \dd x \dd y \\
	&= \sigma \left( \int_\R V''(x) \frac{1}{C_{\lambda_x}} \exp \left( - \frac\alpha\sigma V(x) \right) \dd x \right) \\
	&\qquad \qquad \times \left( \int_0^L (1+\Phi'(y))^2 \frac{1}{C_{\lambda_y}} \exp \left( - \frac1\sigma p(y) \right) \dd y \right) \\
	&= \sigma K \E^{\phi^0}[V''(X)] = \Sigma \E^{\phi^0}[V''(X)],
	\end{aligned}
	\end{equation}
	which shows that $\Delta_2(\epl) = 0$ and completes the proof. \qed 

\subsection{Proof of Theorem \ref{thm:mainTheorem_zetaAlpha}}
	Let us consider the decomposition \eqref{eq:alphaDecomposition}, i.e.,
	\begin{equation}
	\widehat A_k(X^\epl,T) = \alpha + I_1^\epl(T) - I_2^\epl(T),
	\end{equation}
	where $I_1^\epl(T)$ is defined in \eqref{eq:alphaDecomposition} and satisfies
	\begin{equation}
		\lim_{T \to \infty} I_1^\epl(T) = \widetilde {\mathcal M}_\epl^{-1} \E^{\rho^\epl} \left [ \frac{1}{\epl} p' \left ( \frac{X^\epl}{\epl} \right ) V'(Z^\epl) \right ], \quad \text{a.s.}
	\end{equation}
	and, by the proof of Theorem \ref{thm:mainTheorem} we have independently of $\epl$
	\begin{equation}
		\lim_{T \to \infty} I_2^\epl(T) = 0, \quad \text{a.s.}
	\end{equation}
	A Taylor expansion of the first order of $V'$ yields
	\begin{equation}
	V'(Z^\epl) = V'(X^\epl) + V''(\widetilde X^\epl) (Z^\epl - X^\epl),
	\end{equation}
	where $\widetilde X^\epl$ is a random variable which assumes values between $X^\epl$ and $Z^\epl$. We can therefore write
	
	\begin{equation}
	\begin{aligned}
	\lim_{T \to \infty} I_1^\epl(T) &= \widetilde {\mathcal M}_\epl^{-1} \left(\E^{\rho^\epl} \left [ \frac{1}{\epl} p' \left ( \frac{X^\epl}{\epl} \right ) V'(X^\epl) \right ] + \E^{\rho^\epl} \left [ \frac{1}{\epl} p' \left ( \frac{X^\epl}{\epl} \right ) V''(\widetilde X^\epl) (Z^\epl - X^\epl) \right ] \right)\\
	&\eqdef \widetilde {\mathcal M}_\epl^{-1}\left(J_1^\epl + J_2^\epl\right).
	\end{aligned}
	\end{equation}
	We now consider the two terms separately and show they vanish for $\epl \to 0$. Integrating by parts in $J_1^\epl$ we obtain
	\begin{equation}
	\begin{aligned}
	J_1^\epl &= \int_\R \frac{1}{\epl} p' \left( \frac{x}{\epl} \right) V(x) \frac{1}{C_{\rho^\epl}} \exp \left( -\frac\alpha\sigma V(x) - \frac1\sigma p\left(\frac{x}{\epl}\right) \right) \dd x \\
	&= \int_\R \left ( \sigma V''(x) -  \left(V'(x) \otimes V'(x)\right) \alpha \right) \frac{1}{C_{\rho^\epl}} \exp \left( -\frac\alpha\sigma V(x) - \frac1\sigma p\left(\frac{x}{\epl}\right) \right) \dd x \\
	&= \sigma \E^{\rho^\epl} \left[ V''(X^\epl) \right] - \E^{\rho^\epl} \left[ V'(X^\epl) \otimes V'(X^\epl) \right] \alpha.
	\end{aligned}
	\end{equation}
	We then pass to the limit as $\epl \to 0$ and integrate by parts again to obtain
	\begin{equation} \label{eq:limJ1_0}
	\lim_{\epl\to 0} J_1^\epl = \sigma \E^{\rho^0} \left[ V''(X) \right] - \E^{\rho^0} \left[ V'(X) \otimes V'(X) \right] \alpha = 0.
	\end{equation}
	We now turn to $J_2^\epl$. The Hölder's inequality with conjugate exponents $p$ and $q$ and the assumptions on $p$ and $V$ yield
	\begin{equation}
	\abs{J_2^\epl} \le C \epl^{-1} \left( \E^{\rho^\epl} \abs{\widetilde X^\epl}^q \right)^{1/q} \left( \E^{\rho^\epl} \abs{Z^\epl - X^\epl}^p \right)^{1/p}.
	\end{equation}
	Since $\widetilde X^\epl$ assumes values between $X^\epl$ and $Z^\epl$, it has bounded moments by \cite[Corollary 5.4]{PaS07} and Lemma \ref{lem:bounded_momentZ}. Hence, applying Lemma \ref{lem:distanceZandX2} we have
	\begin{equation} 
	\abs{J_2^\epl} \le C \left( \delta \epl^{-2} + \delta^{1/2} \epl^{-1} \right),
	\end{equation}
	which, since $\delta = \epl^\zeta$ with $\zeta > 2$, implies
	\begin{equation}\label{eq:limJ2_0}
		\lim_{\epl \to 0} \abs{J_2^\epl} = 0.
	\end{equation}
	Finally, Lemma \ref{lem:distanceMandTildeM} and the weak convergence of the invariant measure $\phi^\epl$ to $\phi^0$ imply
	\begin{equation}
	\lim_{\epl \to 0} \widetilde{\mathcal M}_\epl = \mathcal M_0,
	\end{equation}
	which, together with \eqref{eq:limJ1_0}, \eqref{eq:limJ2_0} implies that $I_1^\epl(T) \to 0$ for $T \to \infty$ and $\epl \to 0$, which implies the desired result. \qed
	
\section{Proof of Theorem \ref{thm:diffusion_unbiasedness}} \label{ap:diffusion}

First, the ergodic theorem yields
	\begin{equation}
		\lim_{T \to \infty} \widehat \Sigma_k = \frac1\delta \E^{\rho^\epl} \left[ (X^\epl - Z^\epl)^2 \right],
	\end{equation}	
	then applying Proposition \ref{prop:zeta} at stationarity we obtain
	\begin{equation} 
	\begin{aligned}
		\lim_{T \to \infty} \widehat \Sigma_k &= \delta \E^{\rho^\epl} \left[ (B^\epl)^2 \right] + 2\E^{\rho^\epl} \left[ B^\epl R(\epl,\delta) \right] + \frac1\delta \E^{\rho^\epl} \left[ R(\epl,\delta)^2 \right] \\
		&\eqdef I_1^\epl + I_2^\epl + I_3^\epl,
	\end{aligned}
	\end{equation}
	and due to the Cauchy-Schwarz inequality and estimates \eqref{eq:estimate_beta} and \eqref{eq:remainder_zeta} we have
	\begin{equation} \label{eq:bound_diffusion_unbiased}
		\abs{I_2^\epl} \leq C \left( \delta^{1/2} + \epl \delta^{-1/2} \right) \qquad \text{and} \qquad \abs{I_3^\epl} \leq C \left( \delta + \epl^2 \delta^{-1} \right),
	\end{equation}
	for a constant $C>0$ independent of $\epl$ and $\delta$. Let us now consider $I_1^\epl$. Employing equation \eqref{eq:super_magic_formula} with the function $f(z,b) = 1/2 b^2$ gives
	\begin{equation}
		\E^{\eta^\epl} \left[ (B^\epl)^2 \right] = \frac\sigma\delta \E^{\eta^\epl} \left[ 1 + \Phi'(Y^\epl) \right] = \frac{\sigma K}{\delta} = \frac\Sigma\delta,
	\end{equation}
	which together with bounds \eqref{eq:bound_diffusion_unbiased} and the hypothesis on $\delta$ implies
	\begin{equation}
		\lim_{\epl \to 0} \lim_{T \to \infty} \widehat \Sigma = \Sigma, \quad \text{in probability},
	\end{equation}
	which is the desired result.\qed
\end{appendices}

\def\cprime{$'$}

\end{document}